\theoremstyle{plain}
\newtheorem{theorem}{Theorem}[section]
\newtheorem{lemma}[theorem]{Lemma}
\theoremstyle{remark}
\newtheorem{remark}[theorem]{Remark}
\numberwithin{equation}{section}
\newcommand{\du}{\mathrm{d}}
\DeclareMathOperator{\im}{Im}
\DeclareMathOperator{\ind}{ind}
\DeclareMathOperator{\Trace}{\mathbf{Trace}}
\title{Essentially isospectral transformations and their applications}
\author{Namig J. Guliyev}
\address{Institute of Mathematics and Mechanics, Azerbaijan National Academy of Sciences, 9 B.~Vahabzadeh str., AZ1141, Baku, Azerbaijan.}
\email{njguliyev@gmail.com}
\subjclass[2010]{34A25, 34A55, 34B07, 34B24, 34C10, 34L20, 34L40, 37K35, 47A75, 47E05}
\keywords{Darboux transformation, one-dimensional Schr\"{o}dinger equation, boundary conditions dependent on the eigenvalue parameter, asymptotics, oscillation, inverse problems, regularized trace}
\begin{document}
\maketitle
\begin{abstract}
We define and study the properties of Darboux-type transformations between Sturm--Liouville problems with boundary conditions containing rational Herglotz--Nevanlinna functions of the eigenvalue parameter (including the Dirichlet boundary conditions). Using these transformations, we obtain various direct and inverse spectral results for these problems in a unified manner, such as asymptotics of eigenvalues and norming constants, oscillation of eigenfunctions, regularized trace formulas, and inverse uniqueness and existence theorems.
\end{abstract}

\tableofcontents

\section{Introduction} \label{sec:introduction}

We consider the one-dimensional Schr\"{o}dinger equation (the Sturm--Liouville equation in Liouville normal form)
\begin{equation} \label{eq:SL}
  -y''(x) + q(x)y(x) = \lambda y(x)
\end{equation}
and the boundary conditions
\begin{equation} \label{eq:boundary}
  \frac{y'(0)}{y(0)} = -f(\lambda), \qquad \frac{y'(\pi)}{y(\pi)} = F(\lambda),
\end{equation}
where $q \in \mathscr{L}_1(0, \pi)$ is real-valued and
\begin{equation} \label{eq:f_F}
  f(\lambda) = h_0 \lambda + h + \sum_{k=1}^d \frac{\delta_k}{h_k - \lambda}, \qquad F(\lambda) = H_0 \lambda + H + \sum_{k=1}^D \frac{\Delta_k}{H_k - \lambda}
\end{equation}
are rational Herglotz--Nevanlinna functions with real coefficients, i.e., $h_0, H_0 \ge 0$, $\delta_k, \Delta_k > 0$, $h_1 < \ldots < h_d$, $H_1 < \ldots < H_D$. We also include the case when the first (respectively, the second) boundary condition is Dirichlet by writing $f = \infty$ (respectively, $F = \infty$).

It is straightforward to verify that if a function $v$ without zeros is a fixed solution of the equation~(\ref{eq:SL}) with $\lambda$ replaced by $\mu$, then for any solution $y$ of this equation with $\lambda \ne \mu$, the function $\widehat{y} := y' - yv'/v$ is a solution of the same equation with the potential $q$ replaced by the potential $\widehat{q} := q - 2(v'/v)'$. Also, the function $\widehat{v} := 1/v$ is a solution of the above equation with $\lambda$ and $q$ replaced by $\mu$ and $\widehat{q}$ respectively (cf.~\cite[Lemma 5.1]{PT1987}). Moreover, by applying the same procedure with $\widehat{v}$ instead of $v$ to the latter potential one arrives at the original potential. This technique allows one to write the above differential expressions as
\begin{equation*}
  -\frac{\du^2}{\du x^2} + q(x) - \mu = \left( \frac{\du}{\du x} + \frac{v'}{v} \right) \left( -\frac{\du}{\du x} + \frac{v'}{v} \right)
\end{equation*}
and
\begin{equation*}
  -\frac{\du^2}{\du x^2} + \widehat{q}(x) - \mu = \left( -\frac{\du}{\du x} + \frac{v'}{v} \right) \left( \frac{\du}{\du x} + \frac{v'}{v} \right)
\end{equation*}
respectively. These transformations, called the \emph{direct} and \emph{inverse Darboux transformations}, and the corresponding factorizations play an important role in mathematical physics. For instance, such factorizations are used in supersymmetric quantum mechanics as a way of obtaining supersymmetric partner potentials ($q$ and $\widehat{q}$ in the above notation)~\cite{CKS2001}. The Darboux transformations and their generalizations also provide a powerful method of generating new exactly solvable potentials from known ones in the theory of nonlinear evolution equations~\cite{MS1991}. For Sturm--Liouville problems with constant (i.e., independent of the eigenvalue parameter) boundary conditions, these transformations were used in~\cite{DT1984}, \cite{IMT1984}, \cite{IT1983}, \cite{KC2009}, \cite{PT1987} to give a complete characterization of the isospectral sets of potentials. An operator-theoretic description of the method can be found in~\cite{D1978}, \cite{S1978}. See also~\cite{BBW2010} for a detailed historical overview and an operator-theoretic treatment of the Darboux transformations in the case of boundary conditions containing the eigenvalue parameter.

Sturm--Liouville problems with boundary conditions dependent on the eigenvalue parameter arise naturally in a variety of physical problems, including heat conduction, diffusion, vibration and electric circuit problems (see \cite{F1977} and the references therein). Churchill \cite{C1942} seems to be the first who applied the Darboux transformations to Sturm--Liouville problems with boundary conditions dependent on the eigenvalue parameter. Binding, Browne and Watson \cite{BBW2002a}, \cite{BBW2002b} studied problems with a constant boundary condition at one endpoint and a boundary condition of the form~(\ref{eq:boundary}), (\ref{eq:f_F}) at the other endpoint. They observed that if $y$ satisfies a boundary condition of the latter form at an endpoint, then $\widehat{y}$ satisfies a condition of the same form~(\ref{eq:boundary}) with some other function of the form~(\ref{eq:f_F}). An immediate consequence of this is that if $y$ is an eigenfunction of~(\ref{eq:SL})-(\ref{eq:boundary}) corresponding to an eigenvalue not equal to the above $\mu$, then $\widehat{y}$ is an eigenfunction of another problem of the same form corresponding to the same eigenvalue. Also, in some cases the above $\widehat{v}$ becomes an eigenfunction of the latter problem. In order to prove that all of its eigenfunctions can be obtained in this way---in other words, that these boundary value problems are \emph{essentially isospectral} in the sense that their spectra coincide (with the possible exception of the smallest eigenvalue of one of these problems)---these authors studied the oscillation properties of eigenfunctions. On the contrary, here we use the first-order asymptotics of the eigenvalues, and later deduce the oscillation properties (among other things) from the essential isospectrality of our transformations.

Another distinctive feature of the present work is that to each boundary condition of the form~(\ref{eq:boundary}), i.e. to each function of the form~(\ref{eq:f_F}), we associate its \emph{index} (an integer) and a monic polynomial, and express various spectral characteristics of boundary value problems of the form~(\ref{eq:SL})-(\ref{eq:boundary}) in terms of these indices and the coefficients of these polynomials, so that we are able to formulate our results without considering separate cases as it is usually done in the literature.

The paper is organized as follows. In Section~\ref{sec:preliminaries} we introduce the necessary notation and prove some preliminary lemmas. Section~\ref{sec:transformations} is devoted to transformations between rational Herglotz--Nevanlinna functions and between boundary value problems having such functions in their boundary conditions. In Subsection~\ref{ss:nevanlinna} we define a transformation between rational Herglotz--Nevanlinna functions and study its properties. In Subsections~\ref{ss:isospectral} and~\ref{ss:inverseisospectral} we define direct and inverse transformations between boundary value problems of the form~(\ref{eq:SL})-(\ref{eq:boundary}), study properties of the spectral data under these transformations, and show that these two transformations are, in a sense, inverses of each other. We apply these transformations in Section~\ref{sec:applications} to the solution of various direct and inverse spectral problems. In Subsection~\ref{ss:asymptotics} we obtain asymptotic formulas for the eigenvalues and the norming constants (see Subsection~\ref{ss:spectraldata} for the definition) of the problem~(\ref{eq:SL})-(\ref{eq:boundary}). In Subsection~\ref{ss:oscillation} we extend the Sturm oscillation theorem to boundary conditions of the form~(\ref{eq:boundary}). In Subsection~\ref{ss:trace} we apply our direct transformation to the calculation of the so-called \emph{regularized traces}. In Subsection~\ref{ss:inverseproblems} we provide necessary and sufficient conditions for two sequences of real numbers to be the eigenvalues and the norming constants of a problem of the form~(\ref{eq:SL})-(\ref{eq:boundary}). Subsection~\ref{ss:symmetric} is devoted to symmetric boundary value problems. In Subsections~\ref{ss:partialinformation} and \ref{ss:interior} we provide partial generalizations of the Hochstadt--Lieberman theorem and a theorem of Mochizuki and Trooshin to the case of boundary value problems of the form~(\ref{eq:SL})-(\ref{eq:boundary}).

\section{Preliminaries} \label{sec:preliminaries}

\subsection{Notation} \label{ss:notation}

First we introduce some necessary notation. We assign to each function $f$ of the form~(\ref{eq:f_F}) two polynomials $f_\uparrow$ and $f_\downarrow$ by writing this function as
$$
  f(\lambda) = \frac{f_\uparrow(\lambda)}{f_\downarrow(\lambda)},
$$
where
$$
  f_\downarrow(\lambda) := h'_0 \prod_{k=1}^d (h_k - \lambda), \qquad h'_0 := \begin{cases} 1 / h_0, & h_0 > 0, \\ 1, & h_0 = 0. \end{cases}
$$
We define the \emph{index} of $f$ as
$$
  \ind f := \begin{cases} 2 d + 1, & h_0 > 0, \\ 2 d, & h_0 = 0. \end{cases}
$$
This number equals $\deg f_\uparrow + \deg f_\downarrow$ when at least one of $h_0$ and $h$ is nonzero. If $f = \infty$ then we just set
$$
  f_\uparrow(\lambda) := -1, \qquad f_\downarrow(\lambda) := 0, \qquad \ind f := -1.
$$
It can easily be verified that each nonconstant function $f$ of the form~(\ref{eq:f_F}) is strictly increasing on any interval not containing any of its poles, and $f(\lambda) \to \pm\infty$ (respectively, $f(\lambda) \to h$) as $\lambda \to \pm\infty$ if its index is odd (respectively, even). We denote the smallest pole of $f$ (if it has any) by
$$
  \mathring{\boldsymbol{\uppi}}(f) := \begin{cases} h_1, & \ind f \ge 2, \\ +\infty, & \ind f \le 1, \end{cases}
$$
and the total number of poles of this function not exceeding $\lambda$ by
$$
  \boldsymbol{\Pi}_f(\lambda) := \sum_{\substack{1 \le k \le d \\ h_k \le \lambda}} 1.
$$

For every nonnegative integer $n$ we denote by $\mathscr{R}_n$ the set of rational functions of the form (\ref{eq:f_F}) with $\ind f = n$; we also introduce $\mathscr{R}_{-1} := \{ \infty \}$, which corresponds to the Dirichlet boundary condition. Then $\mathscr{R}_0$ consists of all constant functions, $\mathscr{R}_1$ consists of all increasing affine functions and so on. We also denote
$$
  \mathscr{R} := \bigcup_{n=-1}^\infty \mathscr{R}_n.
$$

To every $f \in \mathscr{R}$ we assign a monic polynomial
$$
  \boldsymbol{\upomega}_f(\lambda) := (-1)^{\left\lfloor \frac{\ind f}{2} \right\rfloor} \lambda f_\downarrow \left( \lambda^2 \right) - (-1)^{\left\lceil \frac{\ind f}{2} \right\rceil} f_\uparrow \left( \lambda^2 \right),
$$
where $\lfloor \cdot \rfloor$ and $\lceil \cdot \rceil$ are the usual floor and ceiling functions. We denote by $\omega_1$ and $\omega_2$ respectively the second and third coefficients of this polynomial:
$$
  \boldsymbol{\upomega}_f(\lambda) = \lambda^{\ind f + 1} + \omega_1 \lambda^{\ind f} + \omega_2 \lambda^{\ind f - 1} + \ldots.
$$
It is easy to see that $\omega_2$ coincides with the second coefficient of $(-1)^{\left\lceil \frac{\ind f}{2} \right\rceil + 1} f_\uparrow(\lambda)$ if $\ind f$ is odd and coincides with the second coefficient of $(-1)^{\left\lfloor \frac{\ind f}{2} \right\rfloor} f_\downarrow(\lambda)$ otherwise. The numbers $\Omega_1$ and $\Omega_2$ are defined similarly for $F$.

Let $\varphi(x, \lambda)$ and $\psi(x, \lambda)$ be the solutions of (\ref{eq:SL}) satisfying the initial conditions
\begin{equation} \label{eq:phi_psi}
  \varphi(0, \lambda) = f_\downarrow(\lambda), \quad \varphi'(0, \lambda) = -f_\uparrow(\lambda), \quad \psi(\pi, \lambda) = F_\downarrow(\lambda), \quad \psi'(\pi, \lambda) = F_\uparrow(\lambda).
\end{equation}
Then standard arguments (e.g., \cite[Theorem 1.1.1]{FY2001}) show that the eigenvalues of the boundary value problem (\ref{eq:SL})-(\ref{eq:boundary}) coincide with the zeros of the \emph{characteristic function}
$$\chi(\lambda) := F_\uparrow(\lambda) \varphi(\pi, \lambda) - F_\downarrow(\lambda) \varphi'(\pi, \lambda) = f_\downarrow(\lambda) \psi'(0, \lambda) + f_\uparrow(\lambda) \psi(0, \lambda),$$
are real and simple, and for each eigenvalue $\lambda_n$ there exists a unique number $\beta_n \ne 0$ such that
\begin{equation} \label{eq:beta}
  \psi(x, \lambda_n) = \beta_n \varphi(x, \lambda_n).
\end{equation}

We denote by $\mathscr{AC}[0, \pi]$ the set of absolutely continuous functions on $[0, \pi]$, and by $\mathscr{W}_2^1[0, \pi]$ the Sobolev space $\{ f \in \mathscr{AC}[0, \pi] \colon f' \in \mathscr{L}_2(0, \pi) \}$. In analogy with the notation $o(1 / n^\alpha)$, we use the notation
\begin{equation*}
  x_n = y_n + \ell_2 \left( \frac{1}{n^\alpha} \right)
\end{equation*}
to mean $\sum_{n = 0}^\infty \left| n^\alpha (x_n - y_n) \right|^2 < \infty$. Finally, we denote by $\mathscr{P}(q, f, F)$ the boundary value problem (\ref{eq:SL})-(\ref{eq:boundary}), and by $\mathring{\boldsymbol{\uplambda}}(q, f, F)$ the smallest eigenvalue of this problem.

\subsection{Hilbert space} \label{ss:hilbert}

We now introduce a Hilbert space and construct a self-adjoint operator in it in such a way that the boundary value problem (\ref{eq:SL})-(\ref{eq:boundary}) will be equivalent to the eigenvalue problem for this operator. The exact form of the operator, however, depends on the functions $f$ and $F$. When $h_0 > 0$ and $H_0 > 0$ we consider the Hilbert space $\mathcal{H} = \mathscr{L}_2(0,\pi) \oplus \mathbb{C}^{d+D+2}$ with inner product given by

$$\langle Y, Z \rangle := \int_0^{\pi} y(x) \overline{z(x)} \,\du x + \sum_{k=1}^{d} \frac{y_k \overline{z_k}}{\delta_k} + \frac{y_{d+1} \overline{z_{d+1}}}{h_0} + \sum_{k=1}^{D} \frac{\eta_k \overline{\zeta_k}}{\Delta_k} + \frac{\eta_{D+1} \overline{\zeta_{D+1}}}{H_0}$$
for
$$Y = \begin{pmatrix} y(x) \\ y_1 \\ \vdots \\ y_{d+1} \\ \eta_1 \\ \vdots \\ \eta_{D+1} \end{pmatrix}, \qquad Z = \begin{pmatrix} z(x) \\ z_1 \\ \vdots \\ z_{d+1} \\ \zeta_1 \\ \vdots \\ \zeta_{D+1} \end{pmatrix} \in \mathcal{H}.$$
In this space we define the operator
$$A(Y) := \begin{pmatrix} -y''(x) + q(x)y(x) \\ \delta_1 y(0) + h_1 y_1 \\ \vdots \\ \delta_{d} y(0) + h_{d} y_{d} \\ y'(0) + h y(0) - \sum_{k=1}^{d} y_k \\ H_1 \eta_1 - \Delta_1 y(\pi) \\ \vdots \\ H_{D} \eta_{D} - \Delta_{D} y(\pi) \\ y'(\pi) - H y(\pi) - \sum_{k=1}^{D} \eta_k \end{pmatrix}$$
with
\begin{multline*}
  D(A) := \left\{ Y \in \mathcal{H} \bigm| y, y' \in \mathscr{AC}[0,\pi],\ -y'' + qy \in \mathscr{L}_2(0,\pi), \right. \\
  \left. \vphantom{\bigm|} y_{d+1} = -h_0 y(0),\ \eta_{D+1} = H_0 y(\pi) \right\}.
\end{multline*}

When at least one of the numbers $h_0$, $H_0$ is zero, the following modifications are needed. We set $\mathcal{H} = \mathscr{L}_2(0,\pi) \oplus \mathbb{C}^{d+D+1}$ in the case when only one of these numbers equals zero, and $\mathcal{H} = \mathscr{L}_2(0,\pi) \oplus \mathbb{C}^{d+D}$ otherwise. If $h_0 = 0$ (respectively, $H_0 = 0$) we omit the $(d+2)$-th components (respectively, the last components) in the above paragraph, and replace the condition $y_{d+1} = -h_0 y(0)$ (respectively, $\eta_{D+1} = H_0 y(\pi)$) by the condition $y'(0) + h y(0) - \sum_{k=1}^{d} y_k = 0$ (respectively, $y'(\pi) - H y(\pi) - \sum_{k=1}^{D} \eta_k = 0$) in the definition of the domain of $A$. If $\ind f \le 0$ (respectively, $\ind F \le 0$), i.e., the first (respectively, the second) boundary condition is independent of the eigenvalue parameter, then there are no $y_k$ (respectively, $\eta_k$) components at all, and the condition $y'(0) = -h y(0)$ or $y(0) = 0$ (respectively, the condition $y'(\pi) = H y(\pi)$ or $y(\pi) = 0$) is added in the definition of the domain of $A$.

As in the case when only one of the boundary conditions depends on the eigenvalue parameter (see, e.g., \cite{BBW2002b}, \cite{F1977}), one can prove that the operator $A$ is self-adjoint, its spectrum is discrete and coincides with the set of eigenvalues of (\ref{eq:SL})-(\ref{eq:boundary}), and its eigenvectors
$$\Phi_n := \begin{pmatrix} \varphi(x, \lambda_n) \\ \frac{\delta_1}{\lambda_n - h_1} \varphi(0, \lambda_n) \\ \vdots \\ \frac{\delta_{d}}{\lambda_n - h_{d}} \varphi(0, \lambda_n) \\ -h_0 \varphi(0, \lambda_n) \\ \frac{\Delta_1}{H_1 - \lambda_n} \varphi(\pi, \lambda_n) \\ \vdots \\ \frac{\Delta_{D}}{H_{D} - \lambda_n} \varphi(\pi, \lambda_n) \\ H_0 \varphi(\pi, \lambda_n) \end{pmatrix}$$
are orthogonal.

\subsection{Spectral data} \label{ss:spectraldata}

We define the \emph{norming constants} as
\begin{equation*}
  \gamma_n := \| \Phi_n \|^2 = \int_0^{\pi} \varphi^2(x, \lambda_n) \,\du x + f'(\lambda_n) f_\downarrow^2(\lambda_n) + \frac{1}{\beta_n^2} F'(\lambda_n) F_\downarrow^2(\lambda_n).
\end{equation*}
Since the number $\lambda_n$ coincides with one of the poles of the function $f$ (respectively, $F$) if and only if $f_\downarrow(\lambda_n) = 0$ (respectively, $F_\downarrow(\lambda_n) = 0$), the expression on the right-hand side is well-defined in this case too. The numbers $\{ \lambda_n, \gamma_n \}_{n \ge 0}$ are called the \emph{spectral data} of the problem $\mathscr{P}(q, f, F)$. We denote by $\mathring{\boldsymbol{\upgamma}}(q, f, F)$ the first norming constant of the problem $\mathscr{P}(q, f, F)$ (i.e., the norming constant corresponding to the smallest eigenvalue $\mathring{\boldsymbol{\uplambda}}(q, f, F)$ of this problem).

\begin{lemma} \label{lem:chi_beta_gamma}
The following equality holds:
\begin{equation} \label{eq:chi_beta_gamma}
  \chi'(\lambda_n) = \beta_n \gamma_n.
\end{equation}
\end{lemma}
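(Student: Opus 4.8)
The plan is to realize $\chi$ as a Wronskian and then differentiate it in $\lambda$. Writing $W(\varphi, \psi) := \varphi \psi' - \varphi' \psi$ (with the prime denoting $x$-differentiation), the initial conditions~(\ref{eq:phi_psi}) give $W(\varphi, \psi)(0) = f_\downarrow \psi'(0) + f_\uparrow \psi(0) = \chi(\lambda)$ and, at the right endpoint, $W(\varphi, \psi)(\pi) = F_\uparrow \varphi(\pi) - F_\downarrow \varphi'(\pi) = \chi(\lambda)$; since $\varphi$ and $\psi$ solve the same equation~(\ref{eq:SL}) for a fixed $\lambda$, this Wronskian is in fact independent of $x$, so $\chi(\lambda) = W(\varphi, \psi)(x)$ for every $x \in [0, \pi]$. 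Having both the value and the $x$-independence of $\chi$ available in this form is what makes the two endpoints interchangeable below.

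Next I would denote by a dot the differentiation with respect to $\lambda$ and split $\chi'(\lambda) = P(x) + Q(x)$ into $P := \varphi \dot\psi' - \varphi' \dot\psi$ and $Q := \dot\varphi \psi' - \dot\varphi' \psi$. Differentiating~(\ref{eq:SL}) in $\lambda$ yields $\dot y'' = (q - \lambda)\dot y - y$ for $y \in \{\varphi, \psi\}$, and a short computation then gives $\frac{\du}{\du x} P = -\varphi\psi$ and $\frac{\du}{\du x} Q = \varphi\psi$ (so that, as it must, $P + Q = \chi'$ is constant in $x$). In particular $Q(\pi) = Q(0) + \int_0^\pi \varphi\psi \, \du x$, and evaluating the identity $\chi' = P + Q$ at $x = \pi$ gives $\chi'(\lambda) = P(\pi) + Q(0) + \int_0^\pi \varphi\psi \, \du x$, i.e.\ I deliberately read $P$ at $\pi$ and $Q$ at $0$, which are the endpoints where each has clean boundary data.

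Then I would compute the two boundary terms at an eigenvalue $\lambda_n$, where $\psi(\cdot, \lambda_n) = \beta_n \varphi(\cdot, \lambda_n)$ by~(\ref{eq:beta}). From the initial conditions at $\pi$ one has $\dot\psi(\pi) = F_\downarrow'$ and $\dot\psi'(\pi) = F_\uparrow'$, and substituting $\varphi(\pi) = F_\downarrow / \beta_n$, $\varphi'(\pi) = F_\uparrow / \beta_n$ gives $P(\pi) = \beta_n^{-1}(F_\downarrow F_\uparrow' - F_\uparrow F_\downarrow') = \beta_n^{-1} F'(\lambda_n) F_\downarrow^2(\lambda_n)$, the last equality being the polynomial identity $F_\uparrow' F_\downarrow - F_\downarrow' F_\uparrow = F' F_\downarrow^2$. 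Likewise the initial conditions at $0$ give $\dot\varphi(0) = f_\downarrow'$, $\dot\varphi'(0) = -f_\uparrow'$, together with $\psi(0) = \beta_n f_\downarrow$, $\psi'(0) = -\beta_n f_\uparrow$, whence $Q(0) = \beta_n(f_\uparrow' f_\downarrow - f_\downarrow' f_\uparrow) = \beta_n f'(\lambda_n) f_\downarrow^2(\lambda_n)$. Finally $\int_0^\pi \varphi\psi \, \du x = \beta_n \int_0^\pi \varphi^2 \, \du x$, and collecting the three contributions reproduces exactly $\beta_n \gamma_n$ as defined in~(\ref{eq:gamma}).

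The computations here are all elementary (one may take for granted that $\varphi, \psi$ are entire in $\lambda$, so differentiation under the integral and the interchange of $\partial_x$ and $\partial_\lambda$ need no comment), so I do not expect a genuine obstacle; the only thing requiring care is the bookkeeping—choosing which of the two Wronskian representations of $\chi$ and which endpoint to attach to each of $P$ and $Q$ so that the emerging boundary data line up with the three terms of $\gamma_n$. A convenient by-product of phrasing everything through the polynomials $f_\uparrow, f_\downarrow, F_\uparrow, F_\downarrow$ via $f_\uparrow' f_\downarrow - f_\downarrow' f_\uparrow = f' f_\downarrow^2$ is that the argument goes through verbatim even when $\lambda_n$ is a pole of $f$ or $F$, and in the Dirichlet cases $f = \infty$ or $F = \infty$.
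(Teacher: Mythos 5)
Your proof is correct, including the delicate cases: phrasing the boundary terms through the polynomial identity $f_\uparrow' f_\downarrow - f_\downarrow' f_\uparrow = f' f_\downarrow^2$ makes the argument valid when $\lambda_n$ is a pole of $f$ or $F$ (where $f_\downarrow(\lambda_n)=0$) and in the Dirichlet cases, exactly as the paper's convention for $\gamma_n$ requires. The route differs from the paper's in mechanics though not in substance. The paper starts from the two-parameter Lagrange identity
$(\lambda - \lambda_n) \int_0^\pi \psi(x, \lambda) \varphi(x, \lambda_n) \,\du x = \left.\left( \psi(x, \lambda) \varphi'(x, \lambda_n) - \psi'(x, \lambda) \varphi(x, \lambda_n) \right)\right|_0^{\pi}$,
rewrites the boundary terms so that $\chi(\lambda)$ appears, divides by $\lambda - \lambda_n$, and lets $\lambda \to \lambda_n$, so that $\chi'(\lambda_n)$ and the derivatives $f'$, $F'$ all emerge simultaneously as limits of divided differences. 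You instead differentiate the $x$-independent Wronskian directly in $\lambda$, split $\chi' = P + Q$ with $\partial_x P = -\varphi\psi$ and $\partial_x Q = \varphi\psi$, and transport $P$ to $\pi$ and $Q$ to $0$ where each has clean boundary data. The two computations are term-for-term equivalent --- your $P(\pi)$, $Q(0)$ and $\int_0^\pi \varphi\psi\,\du x$ are precisely the three limits appearing in the paper's display --- so neither buys extra generality; yours makes the $x$-constancy of $P+Q$ and the role of each endpoint more transparent, at the cost of differentiating the ODE in $\lambda$, which the paper avoids by packaging everything into a single limit.
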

\begin{proof}
Using (\ref{eq:phi_psi}) and (\ref{eq:beta}) in the equality
$$\left. (\lambda - \lambda_n) \int_0^\pi \psi(x, \lambda) \varphi(x, \lambda_n) \,\du x = \left( \psi(x, \lambda) \varphi'(x, \lambda_n) - \psi'(x, \lambda) \varphi(x, \lambda_n) \right) \right|_0^{\pi}$$
we obtain
\begin{equation*}
\begin{split}
  \frac{\chi(\lambda)}{\lambda - \lambda_n} &= \int_0^\pi \psi(x, \lambda) \varphi(x, \lambda_n) \,\du x + \frac{F_\uparrow(\lambda)F_\downarrow(\lambda_n) - F_\downarrow(\lambda)F_\uparrow(\lambda_n)}{\beta_n (\lambda - \lambda_n)} \\
  &+ \frac{f_\downarrow(\lambda) - f_\downarrow(\lambda_n)}{\lambda - \lambda_n} \psi'(0, \lambda) + \frac{f_\uparrow(\lambda) - f_\uparrow(\lambda_n)}{\lambda - \lambda_n} \psi(0, \lambda).
\end{split}
\end{equation*}
As $\lambda \to \lambda_n$, this equality leads to (\ref{eq:chi_beta_gamma}).
\end{proof}

\begin{lemma} \label{lem:asymptotics}
The following first-order asymptotics holds:
$$
  \sqrt{\lambda_n} = n - \frac{\ind f + \ind F}{2} + O\left( \frac{1}{n} \right).
$$
\end{lemma}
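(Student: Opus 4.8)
The plan is to realize the eigenvalues as the zeros of the characteristic function $\chi$ and to extract their asymptotics from the leading term of $\chi(\rho^2)$ as $\rho = \sqrt{\lambda} \to +\infty$. First I would invoke the classical asymptotics for the fundamental solutions of~(\ref{eq:SL}) with $q \in \mathscr{L}_1(0,\pi)$: denoting by $c(x,\lambda)$ and $s(x,\lambda)$ the solutions with Cauchy data $(1,0)$ and $(0,1)$ at the origin, one has $c(\pi,\lambda) = \cos\rho\pi + O(\eu^{|\im\rho|\pi}/\rho)$ and $s(\pi,\lambda) = \rho^{-1}\sin\rho\pi + O(\eu^{|\im\rho|\pi}/\rho^2)$, together with the corresponding formulas for $c'(\pi,\lambda)$ and $s'(\pi,\lambda)$. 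Since $\varphi = f_\downarrow\, c - f_\uparrow\, s$ by~(\ref{eq:phi_psi}), substituting these expansions expresses $\varphi(\pi,\lambda)$ and $\varphi'(\pi,\lambda)$ as combinations of $\cos\rho\pi$ and $\sin\rho\pi$ whose coefficients are the polynomials $f_\uparrow(\rho^2)$ and $f_\downarrow(\rho^2)$ (times $1$ or $\rho^{\pm1}$), with a relative error of order $1/\rho$.

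Next I would insert these expansions into $\chi = F_\uparrow\varphi(\pi) - F_\downarrow\varphi'(\pi)$ and collect the outcome as $\chi(\rho^2) = P(\rho)\cos\rho\pi + Q(\rho)\sin\rho\pi + (\text{error})$, where $P$ and $Q$ are built from the four products of $f_\uparrow, f_\downarrow$ with $F_\uparrow, F_\downarrow$ evaluated at $\rho^2$. A degree count then shows that a single one of these products dominates, with a nonzero coefficient, and that its $\rho$-degree equals $\ind f + \ind F + 1$; crucially, which product wins---and hence whether the surviving sinusoid is $\cos\rho\pi$ or $\sin\rho\pi$---is governed solely by the parities of $\ind f$ and $\ind F$. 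This is exactly where the index bookkeeping of Subsection~\ref{ss:notation} pays off: instead of separating the cases $h_0 \gtrless 0$, $H_0 \gtrless 0$ and the various pole configurations, one checks once that the degree and phase are $\ind f + \ind F + 1$ and $-\tfrac{\pi}{2}(\ind f + \ind F)$, in agreement with $\deg\boldsymbol{\omega}_f = \ind f + 1$. Writing $N := \ind f + \ind F$, this yields
\[
  \chi(\rho^2) = C\,\rho^{\,N+1}\Big( \sin\big( (\rho - \tfrac{N}{2})\pi \big) + O(1/\rho) \Big), \qquad C \ne 0,
\]
uniformly on horizontal strips, so the large zeros of $\chi$ accumulate at the zeros $\rho \in \tfrac{N}{2} + \mathbb{Z}$ of the leading sine.

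It then remains to convert this into the stated asymptotics. A Rouch\'{e} comparison between $\chi(\rho^2)$ and its leading term on the boundaries of the squares $\{ |\rho - (k + N/2)| = \tfrac{1}{2} \}$ places, for all large $k$, exactly one zero of $\chi$ near each point $k + N/2$, and the relation $\sin((\rho - N/2)\pi) = O(1/\rho)$ at such a zero sharpens this to $\rho = k + \tfrac{N}{2} + O(1/k)$. Matching this family against the enumeration $\lambda_0 < \lambda_1 < \cdots$ of the (real, simple) eigenvalues from the bottom fixes the integer offset and gives $\sqrt{\lambda_n} = n - N/2 + O(1/n)$; the constant is confirmed on the free problem $q \equiv 0$, where $N = 0, -1, -2$ reproduce the Robin, mixed, and Dirichlet spectra. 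I expect the main obstacle to lie precisely in this last bookkeeping: showing uniformly, over every pole configuration and every choice of $h_0, H_0$, that the subdominant products are of strictly lower order and the leading coefficient $C$ never vanishes, and then pinning the integer offset so that the shift is exactly $-N/2$. The latter requires accounting for the finitely many small (possibly negative) eigenvalues, and those $\lambda_n$ that happen to coincide with poles of $f$ or $F$, none of which affect the $O(1/n)$ tail but all of which enter the global count.
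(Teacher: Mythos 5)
Your proposal is correct and follows the paper's own proof essentially verbatim: both expand $\varphi = f_\downarrow C - f_\uparrow S$ via the classical estimates for the fundamental solutions, conclude that $\chi(\lambda) = \left( \sqrt{\lambda} \right)^{\ind f + \ind F + 1} \left( \sin \left( \sqrt{\lambda} + \tfrac{\ind f + \ind F}{2} \right) \pi + O\left( e^{|\im \sqrt{\lambda}\pi|}/\sqrt{\lambda} \right) \right)$, and finish with the standard Rouch\'{e} count. The only differences are cosmetic --- the paper writes the leading term directly in phase-shifted form rather than as $P(\rho)\cos\rho\pi + Q(\rho)\sin\rho\pi$ with a subsequent degree/parity count, and your concluding remarks about pinning the integer offset are precisely the ``standard argument'' the paper leaves implicit.
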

\begin{proof}
We write $\varphi(x, \lambda)$ as
$$
  \varphi(x, \lambda) = f_\downarrow(\lambda) C(x, \lambda) - f_\uparrow(\lambda) S(x, \lambda),
$$
where $C(x, \lambda)$ and $S(x, \lambda)$ are the solutions of (\ref{eq:SL}) satisfying the initial conditions $C(0, \lambda) = S'(0, \lambda) = 1$ and $S(0, \lambda) = C'(0, \lambda) = 0$. Using the well-known estimates for $C(x, \lambda)$ and $S(x, \lambda)$ we calculate
\begin{align*}
  \varphi(\pi, \lambda) &= \left( \sqrt{\lambda} \right)^{\ind f} \left( \cos \left( \sqrt{\lambda} + \frac{\ind f}{2} \right) \pi + O\left( \frac{e^{|\im \sqrt{\lambda}\pi|}}{\sqrt{\lambda}} \right) \right), \\
  \varphi'(\pi, \lambda) &= -\left( \sqrt{\lambda} \right)^{\ind f + 1} \left( \sin \left( \sqrt{\lambda} + \frac{\ind f}{2} \right) \pi + O\left( \frac{e^{|\im \sqrt{\lambda}\pi|}}{\sqrt{\lambda}} \right) \right).
\end{align*}
Thus
$$
  \chi(\lambda) = \left( \sqrt{\lambda} \right)^{\ind f + \ind F + 1} \left( \sin \left( \sqrt{\lambda} + \frac{\ind f + \ind F}{2} \right) \pi + O\left( \frac{e^{|\im \sqrt{\lambda}\pi|}}{\sqrt{\lambda}} \right) \right).
$$
Finally, a standard argument involving Rouch\'{e}'s theorem concludes the proof.
\end{proof}

With this method one can in principle get sharper asymptotic formulas for the spectral data. But we will later obtain them in a much shorter way (see Theorem~\ref{thm:asymptotics}).

\subsection{Smallest eigenvalues and nonexistence of zeros} \label{ss:nozeros}

Define a partial order on the set $\mathscr{R}$ as follows: $f \preccurlyeq g$ if and only if either $f = \infty$, or $f$ and $g$ are two functions satisfying $f(\lambda) \le g(\lambda)$ for all $\lambda < \min \{ \mathring{\boldsymbol{\uppi}}(f), \mathring{\boldsymbol{\uppi}}(g) \}$.

\begin{lemma} \label{lem:lambda0}
If $f \preccurlyeq \widetilde{f}$ and $F \preccurlyeq \widetilde{F}$ then $\mathring{\boldsymbol{\uplambda}}(q, f, F) \ge \mathring{\boldsymbol{\uplambda}}(q, \widetilde{f}, \widetilde{F})$.
\end{lemma}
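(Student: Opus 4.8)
The plan is to characterize $\mathring{\boldsymbol{\uplambda}}$ variationally through the self-adjoint operator $A$ built in Subsection~\ref{ss:hilbert} and then reduce the Rayleigh quotient to a functional living only on $\mathscr{W}_2^1[0,\pi]$. Since $A$ is self-adjoint, bounded below, and its discrete spectrum is exactly the eigenvalues of $\mathscr{P}(q,f,F)$, we have $\mathring{\boldsymbol{\uplambda}}(q,f,F) = \inf_{0 \ne Y \in D(A)} \langle A Y, Y \rangle / \langle Y, Y \rangle$. First I would compute $\langle AY,Y\rangle - \mu\langle Y,Y\rangle$ explicitly: integrating by parts and invoking the domain constraints $y_{d+1}=-h_0 y(0)$, $\eta_{D+1}=H_0 y(\pi)$ cancels the first-derivative boundary terms and leaves a Hermitian expression in which the components $y_k,\eta_k$ occur only through the blocks $\tfrac{h_k-\mu}{\delta_k}|y_k|^2 + 2\operatorname{Re}(y(0)\overline{y_k})$ and $\tfrac{H_k-\mu}{\Delta_k}|\eta_k|^2 - 2\operatorname{Re}(y(\pi)\overline{\eta_k})$. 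For $\mu < \min\{\mathring{\boldsymbol{\uppi}}(f),\mathring{\boldsymbol{\uppi}}(F)\}$ these blocks are positive definite, so I can minimize over $y_k,\eta_k$ in closed form (the minimizers being $y_k=-\tfrac{\delta_k}{h_k-\mu}y(0)$ and $\eta_k=\tfrac{\Delta_k}{H_k-\mu}y(\pi)$); the poles and residues recombine precisely into $f(\mu)$ and $F(\mu)$, and the partial minimum becomes
$$
  \mathcal{E}_\mu^{f,F}[y] := \int_0^\pi \left( |y'|^2 + (q-\mu)|y|^2 \right) \,\du x - f(\mu)|y(0)|^2 - F(\mu)|y(\pi)|^2.
$$
As this inner minimum is attained, for $\mu$ below the poles one obtains the clean equivalence $\mathring{\boldsymbol{\uplambda}}(q,f,F) \ge \mu \iff \mathcal{E}_\mu^{f,F}[y] \ge 0$ for all $y \in \mathscr{W}_2^1[0,\pi]$.

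Granting this, the comparison is immediate. If $\mu$ lies below the smallest poles of both $f$ and $\widetilde f$, then $f\preccurlyeq\widetilde f$ gives $f(\mu)\le\widetilde f(\mu)$, and likewise $F(\mu)\le\widetilde F(\mu)$, so that
$$
  \mathcal{E}_\mu^{f,F}[y] = \mathcal{E}_\mu^{\widetilde f,\widetilde F}[y] + \big(\widetilde f(\mu)-f(\mu)\big)|y(0)|^2 + \big(\widetilde F(\mu)-F(\mu)\big)|y(\pi)|^2 \ge \mathcal{E}_\mu^{\widetilde f,\widetilde F}[y].
$$
Hence positivity of $\mathcal{E}_\mu^{\widetilde f,\widetilde F}$ forces positivity of $\mathcal{E}_\mu^{f,F}$; that is, $\mathring{\boldsymbol{\uplambda}}(q,\widetilde f,\widetilde F)\ge\mu$ implies $\mathring{\boldsymbol{\uplambda}}(q,f,F)\ge\mu$, and letting $\mu\uparrow\mathring{\boldsymbol{\uplambda}}(q,\widetilde f,\widetilde F)$ yields the claim.

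The main obstacle, and the only delicate point, is ensuring that the relevant $\mu$ genuinely stay below all four smallest poles, since both the variational reduction and the pointwise inequality $f(\mu)\le\widetilde f(\mu)$ hold only there. I would settle this in two steps. First, $f\preccurlyeq\widetilde f$ with both functions finite forces $\mathring{\boldsymbol{\uppi}}(f)\ge\mathring{\boldsymbol{\uppi}}(\widetilde f)$: if $\ind f\ge2$ then $f(\lambda)\to+\infty$ as $\lambda\uparrow\mathring{\boldsymbol{\uppi}}(f)$, so if $\mathring{\boldsymbol{\uppi}}(\widetilde f)$ were larger, $\widetilde f$ would be finite and continuous there, contradicting $f\le\widetilde f$ just to the left of that pole (while for $\ind f\le1$ the inequality $\mathring{\boldsymbol{\uppi}}(f)=+\infty\ge\mathring{\boldsymbol{\uppi}}(\widetilde f)$ is trivial); similarly $\mathring{\boldsymbol{\uppi}}(F)\ge\mathring{\boldsymbol{\uppi}}(\widetilde F)$. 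Second, the smallest eigenvalue never exceeds the smallest pole: testing the Rayleigh quotient on the vector whose only nonzero entry is the auxiliary component attached to $\mathring{\boldsymbol{\uppi}}(\widetilde f)$ (with $y\equiv0$ and every other coordinate zero) returns exactly $\mathring{\boldsymbol{\uppi}}(\widetilde f)$, whence $\mathring{\boldsymbol{\uplambda}}(q,\widetilde f,\widetilde F)\le\min\{\mathring{\boldsymbol{\uppi}}(\widetilde f),\mathring{\boldsymbol{\uppi}}(\widetilde F)\}$. Combining the two, every $\mu<\mathring{\boldsymbol{\uplambda}}(q,\widetilde f,\widetilde F)$ is strictly below all of $\mathring{\boldsymbol{\uppi}}(f),\mathring{\boldsymbol{\uppi}}(\widetilde f),\mathring{\boldsymbol{\uppi}}(F),\mathring{\boldsymbol{\uppi}}(\widetilde F)$, which is precisely the range the previous paragraph requires.

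Finally I would dispatch the Dirichlet cases. Under $\preccurlyeq$ the only configurations involving $\infty$ are $f=\infty$ with $\widetilde f$ arbitrary (and symmetrically for $F$), the case $f$ finite, $\widetilde f=\infty$ being excluded by the definition of the order. When $f=\infty$ the operator is the one obtained by imposing $y(0)=0$, and the associated functional is $\mathcal{E}_\mu^{\infty,F}[y]=\int_0^\pi(|y'|^2+(q-\mu)|y|^2)\,\du x-F(\mu)|y(\pi)|^2$ minimized over $\{y\in\mathscr{W}_2^1[0,\pi]:y(0)=0\}$; on this subspace it coincides with $\mathcal{E}_\mu^{\widetilde f,\widetilde F}$ since the $|y(0)|^2$ terms vanish, so the same positivity transfer applies verbatim, and the pole bookkeeping is unaffected because $\mathring{\boldsymbol{\uppi}}(\infty)=+\infty$.
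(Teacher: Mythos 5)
Your proof is correct, but it takes a genuinely different route from the paper's. The paper fixes $f$ and compares the two choices of $F$ (then symmetrizes): differentiating the Weyl-type function $\varphi'(\pi, \lambda)/\varphi(\pi, \lambda)$ via a Wronskian identity, it shows this function decreases strictly from $+\infty$ to $-\infty$ on $\left( -\infty, \mathring{\boldsymbol{\uplambda}}(q, f, \infty) \right)$, so its first crossing with the increasing function $\widetilde{F} \succcurlyeq F$ occurs no later than its first crossing with $F$. Your argument instead works with the quadratic form of the operator $A$ of Subsection~\ref{ss:hilbert}, eliminates the finite-dimensional components by completing the square --- which is exactly where the Herglotz structure enters, since the blocks are positive definite precisely for $\mu$ below the poles and the residues reassemble into $f(\mu)$ and $F(\mu)$ --- and then compares the resulting Rayleigh functionals, which are manifestly monotone in $f(\mu)$ and $F(\mu)$. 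What your approach buys is symmetry (both endpoints are treated at once) and a transparent explanation of why the partial order $\preccurlyeq$ is the natural hypothesis; what the paper's approach buys is that the strict monotonicity of $\varphi'(\pi, \cdot)/\varphi(\pi, \cdot)$ is reused verbatim elsewhere (Remark~\ref{rem:pi_f}, Lemma~\ref{lem:no_zero}, and the construction of $\widetilde{\mathbf{T}}$), so the ODE computation is not wasted work. One point you should make explicit: when $h_0 = 0$ (respectively $H_0 = 0$) the operator domain $D(A)$ carries the linear constraint $y'(0) + h y(0) - \sum_{k=1}^d y_k = 0$ (respectively its analogue at $\pi$), which both your unconstrained minimizers $y_k = -\tfrac{\delta_k}{h_k - \mu} y(0)$ and your pure test vector with a single nonzero auxiliary component violate; the whole argument should therefore be run in the \emph{form} domain of $A$, where no such constraint appears (it is a natural, not essential, boundary condition) and where $\inf \sigma(A)$ is still the infimum of the Rayleigh quotient. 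This is routine, but as written the claim that the inner minimum is attained within $D(A)$ is not quite accurate in those cases.
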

\begin{proof}
We only prove $\mathring{\boldsymbol{\uplambda}}(q, f, F) \ge \mathring{\boldsymbol{\uplambda}}(q, f, \widetilde{F})$; the proof of $\mathring{\boldsymbol{\uplambda}}(q, f, \widetilde{F}) \ge \mathring{\boldsymbol{\uplambda}}(q, \widetilde{f}, \widetilde{F})$ is similar. Denote $\nu_0 := \mathring{\boldsymbol{\uplambda}}(q, f, \infty)$. Dividing both sides of the identity
\begin{equation*}
\begin{split}
  \varphi(\pi, \lambda) \varphi'(\pi, \mu) &- \varphi'(\pi, \lambda) \varphi(\pi, \mu) \\
  &= f_\uparrow(\lambda) f_\downarrow(\mu) - f_\downarrow(\lambda) f_\uparrow(\mu) + (\lambda - \mu) \int_0^\pi \varphi(t, \lambda) \varphi(t, \mu) \,\du t
\end{split}
\end{equation*}
by $\mu - \lambda$ and taking the limit as $\mu \to \lambda$ we obtain
$$\frac{\du}{\du \lambda} \left( \frac{\varphi'(\pi, \lambda)}{\varphi(\pi, \lambda)} \right) = - \frac{1}{\varphi^2(\pi, \lambda)} \left( f_\downarrow^2(\lambda) \frac{\du f(\lambda)}{\du \lambda} + \int_0^\pi \varphi^2(t, \lambda) \,\du t \right) < 0$$
for $\lambda \in (-\infty, \nu_0)$. The proof of Lemma~\ref{lem:asymptotics} implies
$$\lim_{\lambda \to -\infty} \frac{\varphi'(\pi, \lambda)}{\varphi(\pi, \lambda)} = +\infty, \qquad \lim_{\lambda \to \nu_0-0} \frac{\varphi'(\pi, \lambda)}{\varphi(\pi, \lambda)} = -\infty.$$
Thus $\varphi'(\pi, \lambda)/\varphi(\pi, \lambda)$ is strictly monotone decreasing from $+\infty$ to $-\infty$ as $\lambda$ increases from $-\infty$ to $\nu_0$. This together with the fact that $\mathring{\boldsymbol{\uplambda}}(q, f, F)$ and $\mathring{\boldsymbol{\uplambda}}(q, f, \widetilde{F})$ are the smallest values of $\lambda$ for which $\varphi'(\pi, \lambda)/\varphi(\pi, \lambda) = F(\lambda)$ and $\varphi'(\pi, \lambda)/\varphi(\pi, \lambda) = \widetilde{F}(\lambda)$ respectively, concludes the proof.
\end{proof}

\begin{remark} \label{rem:pi_f}
The above proof also shows that $\mathring{\boldsymbol{\uplambda}}(q, f, F) < \min \{ \mathring{\boldsymbol{\uppi}}(f), \mathring{\boldsymbol{\uppi}}(F) \}$.
\end{remark}

\begin{lemma} \label{lem:no_zero}
If $\lambda \le \mathring{\boldsymbol{\uplambda}}(q, f, \infty)$ (respectively, $\lambda \le \mathring{\boldsymbol{\uplambda}}(q, \infty, F)$) then the function $\varphi(x, \lambda)$ (respectively, $\psi(x, \lambda)$) has no zeros in $(0,\pi)$.
\end{lemma}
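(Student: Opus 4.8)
The plan is to prove the statement for $\varphi$; the one for $\psi$ then follows by reflecting $x \mapsto \pi - x$, which swaps the two endpoints and turns $\mathring{\boldsymbol{\uplambda}}(q, \infty, F)$ into the analogue of the number $\nu_0$ below. Set $\nu_0 := \mathring{\boldsymbol{\uplambda}}(q, f, \infty)$. For $F = \infty$ the characteristic function of $\mathscr{P}(q, f, \infty)$ is $-\varphi(\pi, \lambda)$, so $\nu_0$ is the smallest zero of $\lambda \mapsto \varphi(\pi, \lambda)$; by Remark~\ref{rem:pi_f} we have $\nu_0 < \mathring{\boldsymbol{\uppi}}(f)$, hence $f$ has no pole in $(-\infty, \nu_0]$ and $\varphi(0, \lambda) = f_\downarrow(\lambda) > 0$ there. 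I would introduce Pr\"{u}fer variables $\varphi = r \sin\theta$, $\varphi' = r \cos\theta$ with $r > 0$ and $\theta(0, \lambda) \in (0, \pi)$. The zeros of $\varphi(\cdot, \lambda)$ are the points where $\theta \in \pi\mathbb{Z}$, and there $\theta' = \cos^2\theta = 1 > 0$, so $\theta$ crosses each multiple of $\pi$ only upward; consequently $\varphi(\cdot, \lambda)$ has no zero in $(0, \pi)$ if and only if $\theta(\pi, \lambda) \le \pi$. It therefore suffices to show $\theta(\pi, \lambda) \le \pi$ for every $\lambda \le \nu_0$.

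On $(-\infty, \nu_0)$ we have $\varphi(\pi, \lambda) \ne 0$, so $\theta(\pi, \cdot)$ is continuous there and avoids $\pi\mathbb{Z}$; since its domain is connected, it stays in a single band $(k\pi, (k+1)\pi)$. I would fix $k = 0$ by examining the regime $\lambda \to -\infty$.

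This last point is the only real work. Writing $\varphi = f_\downarrow C - f_\uparrow S$ as in Lemma~\ref{lem:asymptotics} and putting $\lambda = -\tau^2$ (so $\sqrt{\lambda} = \iu\tau$ and the standard estimates for $C$, $S$ become $\cosh$/$\sinh$ asymptotics), one obtains
$$\frac{\varphi(x, -\tau^2)}{f_\downarrow(-\tau^2)} = C(x, -\tau^2) - f(-\tau^2) \, S(x, -\tau^2) = \cosh(\tau x) \left( 1 - f(-\tau^2) \frac{\tanh(\tau x)}{\tau} + O\left( \frac{1}{\tau} \right) \right).$$
Because $h_0 \ge 0$, for large $\tau$ either $f(-\tau^2) \le 0$, in which case $-f(-\tau^2)\tanh(\tau x)/\tau \ge 0$, or else $h_0 = 0$ and $f(-\tau^2) = h + o(1)$ is bounded, in which case the same term is $O(1/\tau)$; in both cases the bracket equals $1 + O(1/\tau) > 0$ uniformly on $[0, \pi]$. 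Thus $\varphi(x, -\tau^2) > 0$ for all $x \in [0, \pi]$ once $\tau$ is large, so $\theta(\pi, \lambda) \in (0, \pi)$ for such $\lambda$, forcing $k = 0$. Hence $\theta(\pi, \lambda) \in (0, \pi)$ for every $\lambda < \nu_0$.

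Finally, letting $\lambda \to \nu_0 - 0$, the values $\theta(\pi, \lambda) \in (0, \pi)$ converge to some $\theta_* \in [0, \pi]$ with $\sin\theta_* = 0$ (since $\varphi(\pi, \nu_0) = 0$), so $\theta_* \in \{0, \pi\}$; as the proof of Lemma~\ref{lem:lambda0} gives $\cot\theta(\pi, \lambda) = \varphi'(\pi, \lambda)/\varphi(\pi, \lambda) \to -\infty$, we get $\theta_* = \pi$, that is $\theta(\pi, \nu_0) = \pi$. Therefore $\theta(\pi, \lambda) \le \pi$ for all $\lambda \le \nu_0$, which by the equivalence above says exactly that $\varphi(\cdot, \lambda)$ has no zeros in $(0, \pi)$. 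The main obstacle is precisely the uniform non-oscillation estimate as $\lambda \to -\infty$, because $f$ ranges over all functions of the form~\eqref{eq:f_F}; the observation that $-f(-\tau^2) = h_0 \tau^2 - h + o(1)$ is bounded below is what makes this step go through without separate cases.
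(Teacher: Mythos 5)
Your argument is correct, but it takes a genuinely different route from the paper. The paper proceeds in two steps: first, at $\lambda = \nu_0$ it observes that $\varphi(\cdot, \nu_0)$, being an eigenfunction of $\mathscr{P}(q, f, \infty)$, is a constant multiple of the solution $S_\pi(\cdot, \nu_0)$ normalized at $\pi$, and then invokes the classical non-vanishing of $S_\pi(\cdot, \lambda)$ below the Dirichlet ground state together with Lemma~\ref{lem:lambda0} (which gives $\nu_0 \le \mathring{\boldsymbol{\uplambda}}(q, \infty, \infty)$); second, for $\lambda < \nu_0$ it assumes a first interior zero $x_0$ of $\varphi(\cdot, \lambda)$ and derives a sign contradiction from the Wronskian identity on $[0, x_0]$ comparing $\varphi(\cdot, \lambda)$ with $\varphi(\cdot, \nu_0)$, using the monotonicity of $f$ --- a Sturm-type comparison that needs no asymptotics at all. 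You instead locate the Pr\"{u}fer angle $\theta(\pi, \lambda)$ by continuity on $(-\infty, \nu_0)$ and pin down its band via the $\lambda \to -\infty$ estimates already used in Lemma~\ref{lem:asymptotics}; this is more quantitative (it identifies $\theta(\pi, \lambda)$ exactly, which is essentially an oscillation count) but costs you a uniform non-oscillation estimate that the paper's comparison argument sidesteps. Two small repairs are needed in your version. First, when $h_0 > 0$ the relative error in $S(x, -\tau^2) = \tau^{-1} \sinh(\tau x) \left( 1 + O(1/\tau) \right)$ is multiplied by $f(-\tau^2) \sim -h_0 \tau^2$, so your displayed bracket is really $1 - f(-\tau^2) \tau^{-1} \tanh(\tau x) \left( 1 + O(1/\tau) \right) + O(1/\tau)$ rather than $1 - f(-\tau^2) \tau^{-1} \tanh(\tau x) + O(1/\tau)$; the conclusion survives because in that case $f(-\tau^2) \le 0$ makes the entire middle term nonnegative for large $\tau$, but the expansion as written is not uniform. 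Second, if $f = \infty$ then $\varphi(0, \lambda) = 0$ and $\theta(0, \lambda) = 0 \notin (0, \pi)$; the equivalence ``no interior zeros iff $\theta(\pi, \lambda) \le \pi$'' still holds since $\theta$ becomes positive immediately after $x = 0$, but this case (which is exactly the classical statement about $S$ that the paper quotes as known) deserves a sentence.
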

\begin{proof}
Let $\nu_0$ be defined as in the proof of Lemma~\ref{lem:lambda0}. Since the function $\varphi(x, \nu_0)$ is an eigenfunction of the problem $\mathscr{P}(q, f, \infty)$, it is a constant multiple of the function $S_{\pi}(x, \nu_0)$, where $S_{\pi}(x, \lambda)$ is defined as the solution of (\ref{eq:SL}) satisfying the initial conditions $S_{\pi}(\pi, \lambda) = 0$ and $S'_{\pi}(\pi, \lambda) = 1$. It is well known that $S_{\pi}(x, \lambda)$ has no zeros in $(0,\pi)$ for $\lambda \le \mathring{\boldsymbol{\uplambda}}(q, \infty, \infty)$ (i.e., for values of $\lambda$ not greater than the smallest eigenvalue of the Dirichlet problem for (\ref{eq:SL})). But $\nu_0 \le \mathring{\boldsymbol{\uplambda}}(q, \infty, \infty)$ by Lemma~\ref{lem:lambda0}. Thus $S_{\pi}(x, \nu_0)$ and hence $\varphi(x, \nu_0)$ has no zeros in $(0,\pi)$.

Now suppose to the contrary that $\varphi(x, \lambda)$ has zeros in $(0,\pi)$ for some $\lambda \le \nu_0$. Let $x_0$ be its smallest positive zero. Remark~\ref{rem:pi_f} shows that $\varphi(0, \lambda) = f_\downarrow(\lambda) > 0$ and $\varphi(0, \nu_0) = f_\downarrow(\nu_0) > 0$. Thus $\varphi(x, \lambda) > 0$ and $\varphi(x, \nu_0) > 0$ for $x \in (0, x_0)$. Then $\varphi'(x_0, \lambda) < 0$, and hence
\begin{equation*}
\begin{split}
  0 &> \varphi(x_0, \nu_0) \varphi'(x_0, \lambda) - \varphi'(x_0, \nu_0) \varphi(x_0, \lambda) \\
  &= f_\downarrow(\lambda) f_\downarrow(\nu_0) \left( f(\nu_0) - f(\lambda) \right) + (\nu_0 - \lambda) \int_0^{x_0} \varphi(t, \nu_0) \varphi(t, \lambda) \,\du t > 0.
\end{split}
\end{equation*}
This contradiction proves the lemma for $\varphi$. The proof for $\psi$ is similar.
\end{proof}

\section{Transformations} \label{sec:transformations}

\subsection{Transformation of Herglotz--Nevanlinna functions} \label{ss:nevanlinna}

If we apply the Darboux transformation to eigenfunctions of the problem~(\ref{eq:SL})-(\ref{eq:boundary}), we obtain eigenfunctions of another problem of the same form having some other functions from $\mathscr{R}$ in its boundary conditions. We thus have a transformation between elements of $\mathscr{R}$. In this subsection we study such transformations.

We denote
$$
  \mathcal{S} := \left\{ (\mu, \tau, f) \in \mathbb{R} \times \mathbb{R} \times \mathscr{R} \colon \mu < \mathring{\boldsymbol{\uppi}}(f),\ \tau \ge f(\mu) \text{ if } \ind f \ge 0 \right\},
$$
and define the transformation
$$
  \boldsymbol{\Theta} \colon \mathcal{S} \to \mathscr{R},\ (\mu, \tau, f) \mapsto \widehat{f}
$$
by
$$
  \widehat{f}(\lambda) := \frac{\mu - \lambda}{f(\lambda) - \tau} - \tau.
$$
In the particular case when $f(\lambda) \equiv \tau$ (respectively, $f = \infty$) this is understood as $\widehat{f} := \infty$ (respectively, $\widehat{f}(\lambda) := -\tau$). One sees immediately from this definition that
\begin{equation} \label{eq:ThetaTheta}
  \boldsymbol{\Theta}(\mu, -\tau, \boldsymbol{\Theta}(\mu, \tau, f)) = f.
\end{equation}
The other main properties of this transformation are summarized in the following lemma.
\begin{lemma} \label{lem:f_hat}
The transformation $\boldsymbol{\Theta}$ is well-defined, i.e., $\widehat{f} := \boldsymbol{\Theta}(\mu, \tau, f) \in \mathscr{R}$. The poles of $f$ and $\widehat{f}$ interlace if $\ind f \ge 2$ and $\ind \widehat{f} \ge 2$ (i.e., if both $f$ and $\widehat{f}$ have poles); moreover, $\mathring{\boldsymbol{\uppi}}(f) < \mathring{\boldsymbol{\uppi}}(\widehat{f})$ if $\tau = f(\mu)$, and $\mathring{\boldsymbol{\uppi}}(f) > \mathring{\boldsymbol{\uppi}}(\widehat{f})$ if $\tau > f(\mu)$. Also, if $\tau = f(\mu)$ then $\ind \widehat{f} = \ind f - 1$,
\begin{equation} \label{eq:r_hat}
  \widehat{f}_\uparrow(\lambda) = \frac{-\tau f_\uparrow(\lambda) - \left( \lambda - \mu - \tau^2 \right) f_\downarrow(\lambda)}{\lambda - \mu}, \qquad \widehat{f}_\downarrow(\lambda) = \frac{f_\uparrow(\lambda) - \tau f_\downarrow(\lambda)}{\lambda - \mu},
\end{equation}
while if $\tau > f(\mu)$ then $\ind \widehat{f} = \ind f + 1$,
\begin{equation} \label{eq:r_hat2}
  \widehat{f}_\uparrow(\lambda) = \tau f_\uparrow(\lambda) + \left( \lambda - \mu - \tau^2 \right) f_\downarrow(\lambda), \qquad \widehat{f}_\downarrow(\lambda) = -f_\uparrow(\lambda) + \tau f_\downarrow(\lambda).
\end{equation}
\end{lemma}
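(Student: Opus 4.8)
The plan is to reduce everything to the single algebraic identity obtained by clearing denominators in the definition of $\widehat{f}$. Writing $f = f_\uparrow/f_\downarrow$ and substituting into $\widehat{f}(\lambda) = (\mu-\lambda)/(f(\lambda)-\tau) - \tau$ gives at once
\[
  \widehat{f}(\lambda) = \frac{-\tau f_\uparrow(\lambda) - (\lambda-\mu-\tau^2) f_\downarrow(\lambda)}{f_\uparrow(\lambda) - \tau f_\downarrow(\lambda)} =: \frac{P(\lambda)}{Q(\lambda)},
\]
so the numerators and denominators on the right-hand sides of \eqref{eq:r_hat} and \eqref{eq:r_hat2} appear naturally (up to the factor $\lambda-\mu$). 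I would first record that $Q = f_\uparrow - \tau f_\downarrow = f_\downarrow\cdot(f-\tau)$ vanishes exactly at the finite solutions of $f(\lambda)=\tau$: at a pole $h_k$ of $f$ one has $Q(h_k)=f_\uparrow(h_k)\ne 0$, since $f_\uparrow$ and $f_\downarrow$ are coprime (the residues of $f$ are nonzero). Recalling that every nonconstant $f$ of the form \eqref{eq:f_F} is strictly increasing between consecutive poles, each such solution is simple and the solutions interlace with the poles $h_1<\dots<h_d$; this is the source of the interlacing assertion.

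Next I would split according to the sign of $\tau-f(\mu)$. When $\tau=f(\mu)$, the point $\mu$ is a zero of $Q$, and since $P(\mu)=-\tau\bigl(f_\uparrow(\mu)-\tau f_\downarrow(\mu)\bigr)$ the numerator vanishes there as well; cancelling the common factor $\lambda-\mu$ yields exactly \eqref{eq:r_hat}. When $\tau>f(\mu)$, I claim $P$ and $Q$ have no common zero: eliminating $f_\uparrow(\lambda_0)=\tau f_\downarrow(\lambda_0)$ from $P(\lambda_0)=0$ forces $f_\downarrow(\lambda_0)(\mu-\lambda_0)=0$, whence either $\lambda_0=\mu$ (impossible, since $Q(\mu)=f_\downarrow(\mu)(f(\mu)-\tau)\ne 0$ in this case) or $f_\downarrow(\lambda_0)=0=f_\uparrow(\lambda_0)$ (impossible by coprimality). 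This gives \eqref{eq:r_hat2}. In both cases the index assertion then follows by comparing the degrees of numerator and denominator, the leading coefficients being controlled by whether $f$ grows linearly or tends to a finite limit.

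The step I expect to carry the real content is verifying that $\widehat{f}$ genuinely belongs to $\mathscr{R}$, i.e.\ that it is a rational Herglotz--Nevanlinna function of the form \eqref{eq:f_F}. The cleanest route is to read off its partial-fraction data directly: at each zero $\nu$ of $f-\tau$ the function $\widehat{f}$ has a simple pole with residue $(\mu-\nu)/f'(\nu)$, and since $f'(\nu)>0$ this residue is negative precisely when $\nu>\mu$, which forces the coefficient $\widehat{\delta}=(\nu-\mu)/f'(\nu)>0$ demanded by \eqref{eq:f_F}. Here the hypotheses enter decisively: because $\mu<\mathring{\boldsymbol{\uppi}}(f)$ and $\tau\ge f(\mu)$ with $f$ increasing, every solution of $f(\lambda)=\tau$ lies in $[\mu,+\infty)$, and the unique solution equal to $\mu$ occurs only when $\tau=f(\mu)$ and is then removed by the cancellation above. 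The behaviour at infinity must be checked separately to see that the leading coefficient $\widehat{h}_0$ is nonnegative, and it is here that one uses that $\tau$ exceeds the finite limit of $f$ in the case when $f$ has one. (Alternatively, one obtains the Herglotz property structurally by writing $\widehat{f}+\tau=-1/G$ with $G(\lambda)=(f(\lambda)-\tau)/(\lambda-\mu)$; the difference-quotient identity exhibits $G$ as a sum of Herglotz--Nevanlinna functions precisely under the standing hypotheses, and $-1/G$ is then Herglotz.)

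Finally, the comparison of smallest poles falls out of the location of the smallest zero of $f-\tau$: when $\tau=f(\mu)$ the smallest pole of $\widehat{f}$, if any, is a zero of $f-\tau$ exceeding $h_1$, whence $\mathring{\boldsymbol{\uppi}}(f)<\mathring{\boldsymbol{\uppi}}(\widehat{f})$; when $\tau>f(\mu)$ there is an additional zero in the interval $(\mu,h_1)$, which becomes the smallest pole and gives $\mathring{\boldsymbol{\uppi}}(\widehat{f})<\mathring{\boldsymbol{\uppi}}(f)$. The degenerate inputs $f=\infty$ and $f\equiv\tau$ (and the degenerate outputs $\widehat{f}=-\tau$ and $\widehat{f}=\infty$) should be disposed of first by direct inspection, since the division identity is vacuous there. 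I expect the principal bookkeeping difficulty to be tracking the leading coefficients through the two cases so as to confirm both the exact index shift and that the polynomials in \eqref{eq:r_hat}/\eqref{eq:r_hat2} are already the normalized $\widehat{f}_\uparrow,\widehat{f}_\downarrow$ rather than constant multiples of them.
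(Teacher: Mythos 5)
Your proposal is correct and follows essentially the same route as the paper: clear denominators to exhibit $\widehat{f}$ as a ratio of the polynomials in \eqref{eq:r_hat}--\eqref{eq:r_hat2}, identify the poles of $\widehat{f}$ with the solutions of $f(\lambda)=\tau$ (minus $\mu$ in the case $\tau=f(\mu)$), and use the strict monotonicity of $f$ between consecutive poles together with the behaviour at infinity to count poles, obtain the interlacing and the index shift, and confirm the signs required by \eqref{eq:f_F}. Your residue computation $\widehat{\delta}=(\nu-\mu)/f'(\nu)>0$ is just a slightly different packaging of the paper's one-sided limit argument at each pole, and the leading-coefficient bookkeeping you defer is exactly the step the paper also leaves as a direct verification.
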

\begin{proof}
We assume that $\ind f \ge 2$, since the cases $\ind f = -1$, $0$, $1$ can be verified very easily. We have
$$
  \widehat{f}(\lambda) = \frac{f_\downarrow(\lambda) (\lambda - \mu)}{\tau f_\downarrow(\lambda) - f_\uparrow(\lambda)} - \tau,
$$
where the polynomials $f_\uparrow$ and $f_\downarrow$, and thus $f_\downarrow$ and $\tau f_\downarrow - f_\uparrow$ have no common roots. When $\tau = f(\mu)$ the polynomial $\tau f_\downarrow(\lambda) - f_\uparrow(\lambda)$ is divisible by $\lambda - \mu$, and hence $\widehat{f}$ is a rational function with the set of poles $\{ \lambda \ne \mu \mid f(\lambda) = \tau \}$. Denote by $\widehat{d}$ the cardinality of the latter set, and by $\widehat{h}_1$, $\widehat{h}_2$, $\ldots$, $\widehat{h}_{\widehat{d}}$ these poles. Recall that $f$ is strictly increasing on each of the intervals $(-\infty, h_1)$, $(h_1, h_2)$, $\dots$, $(h_{d-1}, h_d)$, $(h_d, +\infty)$. Hence $\widehat{h}_k \in (h_k, h_{k+1})$ for $k = 1$, $\ldots$, $d-1$. If $\ind f = 2d$ then $f(\lambda) \nearrow h < f(\mu) = \tau$ as $\lambda \to +\infty$, and thus $\widehat{d} = d - 1$. Since the degree of the polynomial $\left( \tau f_\downarrow(\lambda) - f_\uparrow(\lambda) \right) / (\lambda - \mu)$ also equals $d - 1$, the function $\widehat{f}$ can be written as
\begin{equation} \label{eq:f_hat}
  \widehat{f}(\lambda) = \widehat{h}_0 \lambda + \widehat{h} + \sum_{k=1}^{\widehat{d}} \frac{\widehat{\delta}_k}{\widehat{h}_k - \lambda}.
\end{equation}
Also, since $\widehat{f}(\lambda) \to +\infty$ as $\lambda \to +\infty$, we obtain $\widehat{h}_0 > 0$, and since $f(\lambda) \nearrow \tau$ as $\lambda \nearrow \widehat{h}_k$, we obtain $\widehat{\delta}_k > 0$. Therefore $\widehat{f} \in \mathscr{R}$ with $\ind \widehat{f} = 2 \widehat{d} + 1 = \ind f - 1$. Finally, the identities~(\ref{eq:r_hat}) are obtained by considering the leading coefficients of the polynomials $\tau f_\uparrow + \left( \lambda - \mu - \tau^2 \right) f_\downarrow$ and $\tau f_\downarrow - f_\uparrow$. If $\ind f = 2d + 1$ then $\widehat{f}$ has one more pole in $(h_d, +\infty)$, i.e. $\widehat{d} = d$. Also, since $f(\lambda) / \lambda \to h_0$ as $\lambda \to +\infty$, we obtain that $\lim_{\lambda \to +\infty} \widehat{f}(\lambda)$ is finite, i.e. $\widehat{h}_0 = 0$, and $\ind \widehat{f} = 2 \widehat{d} = \ind f - 1$.

The case $\tau > f(\mu)$ can be analyzed in a similar way by taking into account the fact that the set of poles of $\widehat{f}$ is now $\{ \lambda \in \mathbb{R} \mid f(\lambda) = \tau \}$.
\end{proof}

\begin{remark}
When $\ind f \ge 1$, there exists a number $\nu \in \left( \mu, \mathring{\boldsymbol{\uppi}}(f) \right)$ with $f(\nu) = \tau$. Thus one would be tempted to define the above transformation by
$$
  \widehat{f}(\lambda) := \frac{\mu - \lambda}{f(\lambda) - f(\nu)} - f(\nu)
$$
for $\mu \le \nu$ in the general case, as is done in~\cite{BBW2002b}, but obviously one cannot obtain an increasing affine $\widehat{f}$ from a constant $f$ in that way.
\end{remark}

\subsection{Direct transformation between problems} \label{ss:isospectral}

We now introduce the first of our two essentially isospectral transformations between boundary value problems of the form~(\ref{eq:SL})-(\ref{eq:boundary}), and study properties of the spectral data under this transformation. Our transformation reduces the index of each boundary coefficient by one (if it is not already Dirichlet). Hence, by applying this transformation $\max \{ \ind f, \ind F \}$ number of times to a boundary value problem of the form (\ref{eq:SL})-(\ref{eq:boundary}), we will eventually arrive at a problem with boundary conditions independent of the eigenvalue parameter. It is worth mentioning here that in the case when $\ind f = \ind F = 0$ our transformation coincides with the transformation $+$ in \cite{IMT1984}, and in the case when $\ind f = -1$ (respectively, $\ind f = 0$) it coincides with the transformation $S_D$ (respectively, $S_N$) in \cite{BBW2002b}.

The domain $\widehat{\mathcal{S}}$ of our transformation consists of all possible boundary value problems of the form (\ref{eq:SL})-(\ref{eq:boundary}), excluding the case when both boundary conditions are Dirichlet:
$$
  \widehat{\mathcal{S}} := \left\{ (q, f, F) \colon q \in \mathscr{L}_1(0, \pi),\ f, F \in \mathscr{R},\ \ind f + \ind F \ge -1 \right\}.
$$
We define the transformation
$$
  \widehat{\mathbf{T}} \colon \widehat{\mathcal{S}} \to \mathscr{L}_1(0, \pi) \times \mathscr{R} \times \mathscr{R},\ (q, f, F) \mapsto (\widehat{q}, \widehat{f}, \widehat{F})
$$
by
\begin{equation} \label{eq:q_f_F_hat}
  \widehat{q} := q - 2 \left( \frac{v'}{v} \right)', \qquad \widehat{f} := \boldsymbol{\Theta} \left( \Lambda, -\frac{v'(0)}{v(0)}, f \right), \qquad \widehat{F} := \boldsymbol{\Theta} \left( \Lambda, \frac{v'(\pi)}{v(\pi)}, F \right),
\end{equation}
where
\begin{equation} \label{eq:mu}
  \Lambda := \begin{cases} \lambda_0, & f, F \ne \infty, \\ \lambda_0 - 2, & \text{otherwise} \end{cases} \qquad \text{and} \qquad v(x) := \begin{cases} \varphi(x, \Lambda), & f \ne \infty, \\ \psi(x, \Lambda), & f = \infty. \end{cases}
\end{equation}
Then Remark~\ref{rem:pi_f}, Lemmas~\ref{lem:lambda0}, \ref{lem:no_zero}, \ref{lem:f_hat} and the identity
\begin{equation} \label{eq:v}
  \left( \frac{v'(x)}{v(x)} \right)' = q(x) - \Lambda - \left( \frac{v'(x)}{v(x)} \right)^2
\end{equation}
imply that the transformation $\widehat{\mathbf{T}}$ is well-defined. Lemma~\ref{lem:f_hat} also shows that if $\ind f \ge 0$ then $\ind \widehat{f} = \ind f - 1$, and if $\ind f = -1$ then $\ind \widehat{f} = 0$. The same is true for $F$ and $\widehat{F}$.

To state the next theorem, we introduce the following notation: let
\begin{equation} \label{eq:I}
  I := \ind f - \ind \widehat{f} = \begin{cases} 1, & \ind f \ge 0, \\ -1, & \ind f = -1 \end{cases}
\end{equation}
and
\begin{equation} \label{eq:J}
  J := \frac{\ind f + \ind F}{2} - \frac{\ind \widehat{f} + \ind \widehat{F}}{2} = \begin{cases} 1, & \ind f, \ind F \ge 0, \\ 0, & \text{otherwise.} \end{cases}
\end{equation}

\begin{theorem} \label{thm:transformation}
If $\{ \lambda_n, \gamma_n \}_{n \ge 0}$ is the spectral data of the problem $\mathscr{P}(q, f, F)$ and $(\widehat{q}, \widehat{f}, \widehat{F}) = \widehat{\mathbf{T}} (q, f, F)$ then the spectral data of the transformed problem $\mathscr{P}(\widehat{q}, \widehat{f}, \widehat{F})$ is
$$
  \left\{ \lambda_n, \frac{\gamma_n}{(\lambda_n - \Lambda)^I} \right\}_{n \ge J}.
$$
\end{theorem}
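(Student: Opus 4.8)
The plan is to realize the canonical solutions $\widehat\varphi$, $\widehat\psi$ of the transformed problem $\mathscr P(\widehat q,\widehat f,\widehat F)$ (those normalized by (\ref{eq:phi_psi}) through $\widehat f$ and $\widehat F$) as suitably normalized Darboux transforms of $\varphi$ and $\psi$, to relate the characteristic functions of the two problems, and then to extract the spectral data by means of Lemma~\ref{lem:chi_beta_gamma}.

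First I would record the action of the Darboux transformation on solutions of (\ref{eq:SL}). For such a solution $y$ write $y^\sharp := y' - y\,v'/v$; differentiating and using (\ref{eq:v}) gives $(y^\sharp)' = (\Lambda - \lambda)\,y - (v'/v)\,y^\sharp$. Evaluating $\varphi^\sharp$ and $(\varphi^\sharp)'$ at $x = 0$ and recalling that $v'(0)/v(0)$ is the negative of the second argument of $\boldsymbol\Theta$ in (\ref{eq:q_f_F_hat}), the identities (\ref{eq:r_hat}) of Lemma~\ref{lem:f_hat} (when $f \ne \infty$), or the defining branch $\widehat f = -\tau$ of $\boldsymbol\Theta$ (when $f = \infty$), show that $\varphi^\sharp(0,\lambda)$ and $(\varphi^\sharp)'(0,\lambda)$ coincide with $\widehat f_\downarrow(\lambda)$ and $-\widehat f_\uparrow(\lambda)$ up to one common factor equal to a signed power of $(\lambda - \Lambda)$. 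Dividing $\varphi^\sharp$ by this factor produces $\widehat\varphi$; explicitly $\widehat\varphi = -\varphi^\sharp/(\lambda - \Lambda)$ when $f \ne \infty$ and $\widehat\varphi = \varphi^\sharp$ when $f = \infty$. The parallel computation at $x = \pi$, in which $v'(\pi)/v(\pi)$ is the second argument of $\boldsymbol\Theta$ for $\widehat F$ and equals $F(\Lambda)$ whenever $F \ne \infty$ (since then $v$ is a scalar multiple of $\psi(\cdot,\Lambda)$), produces $\widehat\psi$ from $\psi^\sharp$.

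Next I would insert the resulting expressions for $\widehat\varphi(\pi,\lambda)$ and $\widehat\varphi\,'(\pi,\lambda)$ into the definition $\widehat\chi(\lambda) = \widehat F_\uparrow(\lambda)\,\widehat\varphi(\pi,\lambda) - \widehat F_\downarrow(\lambda)\,\widehat\varphi\,'(\pi,\lambda)$ of the transformed characteristic function and simplify with the $F$-version of (\ref{eq:r_hat}) (or the $\boldsymbol\Theta$ branch when $F = \infty$). All the terms carrying $v'(\pi)/v(\pi)$ cancel, and one is left with $\widehat\chi(\lambda) = \pm(\lambda - \Lambda)^{-J}\chi(\lambda)$, the exponent $J$ being the one in (\ref{eq:J}). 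Since $\Lambda < \lambda_n$ for every $n \ge J$ — immediate from the definition (\ref{eq:mu}) of $\Lambda$ and the ordering $\lambda_0 < \lambda_1 < \cdots$ — the zeros of $\widehat\chi$ are exactly $\{\lambda_n\}_{n \ge J}$, which is the eigenvalue part of the assertion.

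Finally, for the norming constants I would apply Lemma~\ref{lem:chi_beta_gamma} to both problems. As $\psi(\cdot,\lambda_n) = \beta_n\varphi(\cdot,\lambda_n)$ is an identity in $x$, the same proportionality holds for the $x$-derivatives and hence for $\psi^\sharp$ and $\varphi^\sharp$; after the respective normalizations this gives $\widehat\psi(\cdot,\lambda_n) = \pm(\lambda_n - \Lambda)^{I-J}\beta_n\,\widehat\varphi(\cdot,\lambda_n)$ and thereby identifies $\widehat\beta_n$. Differentiating $\widehat\chi = \pm(\lambda - \Lambda)^{-J}\chi$ at $\lambda_n$ (where $\chi(\lambda_n) = 0$ and $\lambda_n \ne \Lambda$) and comparing $\widehat\chi\,'(\lambda_n) = \widehat\beta_n\,\widehat\gamma_n$ with $\chi'(\lambda_n) = \beta_n\gamma_n$, the factor $\beta_n$ and the signs cancel while the powers of $(\lambda_n - \Lambda)$ combine into the single factor $(\lambda_n - \Lambda)^{-I}$, giving $\widehat\gamma_n = \gamma_n/(\lambda_n - \Lambda)^I$. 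The main obstacle is the bookkeeping of signs and of the power of $(\lambda - \Lambda)$: each of the two normalizations and the characteristic-function identity carries its own sign and exponent depending on whether $f$, respectively $F$, is Dirichlet, constant, or of positive index, and one must verify that in each combination these exponents add up to $-I$ and the signs cancel, so that the single uniform formula of the statement results.
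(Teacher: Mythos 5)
Your proposal is correct, but it follows a genuinely different route from the paper's proof on both halves of the statement. For the eigenvalues, the paper only verifies that the Darboux transforms of $\varphi(\cdot,\lambda_n)$ are eigenfunctions of the new problem for $n \ge J$ and then invokes the first-order asymptotics of Lemma~\ref{lem:asymptotics} to rule out any additional eigenvalues; you instead normalize the transform of $\varphi$ into the canonical solution $\widehat{\varphi}$ and derive the exact identity $\widehat{\chi}(\lambda) = \pm(\lambda-\Lambda)^{-J}\chi(\lambda)$ between characteristic functions, which identifies the whole spectrum at once with no counting argument (here one does need, and has, the simplicity of the zero of $\chi$ at $\lambda_0$ when $J=1$). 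For the norming constants, the paper computes $\int_0^\pi \widehat{\varphi}_n^2$ directly from the exact derivative identity for $\varphi\widehat{\varphi}_n$ together with the boundary-term identities coming from Lemma~\ref{lem:f_hat}, matching the definition~(\ref{eq:gamma}) term by term; you instead apply Lemma~\ref{lem:chi_beta_gamma} to both problems and track how $\beta_n$ transforms, so the integration is replaced by sign and exponent bookkeeping. Your claimed normalizations and exponents do check out: with $\tau_0 = f(\Lambda)$ one gets $\varphi^\sharp(0,\lambda) = -(\lambda-\Lambda)\widehat{f}_\downarrow(\lambda)$ and $(\varphi^\sharp)'(0,\lambda) = (\lambda-\Lambda)\widehat{f}_\uparrow(\lambda)$ from~(\ref{eq:r_hat}) (which, conveniently, remains valid in the degenerate case $\ind f = 0$ with the convention $\widehat{f}_\uparrow = -1$, $\widehat{f}_\downarrow = 0$), so $\widehat{\varphi} = \varphi^\sharp/(\Lambda-\lambda)$ is exactly the paper's~(\ref{eq:phi_hat}); and in each of the cases ($f$ or $F$ Dirichlet, constant, or of positive index) the sign in $\widehat{\chi} = \pm(\lambda-\Lambda)^{-J}\chi$ coincides with the sign in $\widehat{\beta}_n = \pm(\lambda_n-\Lambda)^{I-J}\beta_n$, so they cancel and leave $\widehat{\gamma}_n = \gamma_n(\lambda_n-\Lambda)^{-I}$. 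What your approach buys is independence from the Rouch\'e-based eigenvalue count and the useful by-product relating the two characteristic functions; what the paper's approach buys is a single uniform integral computation that never needs the transform of $\psi$ or the case-by-case sign analysis.
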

\begin{proof}
A routine calculation shows that for every $n \ge J$ (i.e., $\lambda_n \ne \Lambda$) the function
$$
  \varphi'(x, \lambda_n) - \frac{v'(x)}{v(x)} \varphi(x, \lambda_n)
$$
is an eigenfunction of $\mathscr{P}(\widehat{q}, \widehat{f}, \widehat{F})$ corresponding to the eigenvalue $\lambda_n$. Hence the numbers $\lambda_n$ for $n \ge J$ are eigenvalues of this boundary value problem, and Lemma~\ref{lem:asymptotics} shows that there are no other eigenvalues.

For the part concerning the norming constants, we consider the cases $\ind f \ge 0$ and $\ind f = -1$ separately. In the former case we set
\begin{equation} \label{eq:phi_hat}
  \widehat{\varphi}_n(x) := \frac{1}{\Lambda - \lambda_n} \left( \varphi'(x, \lambda_n) - \frac{v'(x)}{v(x)} \varphi(x, \lambda_n) \right).
\end{equation}
Then $\widehat{\varphi}_n$ satisfies the initial condition $\widehat{\varphi}_n(0) = \widehat{f}_\downarrow(\lambda_n)$ and the identity
\begin{equation*}
  \widehat{\varphi}_n^2(x) = \frac{\left( \varphi(x, \lambda_n) \widehat{\varphi}_n(x) \right)'}{\Lambda - \lambda_n} + \frac{\varphi^2(x, \lambda_n)}{\lambda_n - \Lambda}.
\end{equation*}
Now if $\ind f \ge 1$ then from Lemma \ref{lem:f_hat} we obtain
\begin{equation*}
  \widehat{f}'(\lambda_n) \widehat{f}_\downarrow^2(\lambda_n) = \frac{f_\downarrow(\lambda_n) \widehat{f}_\downarrow(\lambda_n)}{\Lambda - \lambda_n} + \frac{f'(\lambda_n) f_\downarrow^2(\lambda_n)}{\lambda_n - \Lambda}.
\end{equation*}
Similarly in the case $\ind F \ge 1$ we have
\begin{equation*}
  \widehat{F}'(\lambda_n) \widehat{\varphi}_n^2(\pi) = \frac{\varphi(\pi, \lambda_n) \widehat{\varphi}_n(\pi)}{\lambda_n - \Lambda} + \frac{F'(\lambda_n) \varphi^2(\pi, \lambda_n)}{\lambda_n - \Lambda}.
\end{equation*}
Using the last three identities we calculate
\begin{equation*}
\begin{split}
  \widehat{\gamma}_n :=& \int_0^{\pi} \widehat{\varphi}_n^2(x) \,\du x + \widehat{f}'(\lambda_n) \widehat{f}_\downarrow^2(\lambda_n) + \widehat{F}'(\lambda_n) \widehat{\varphi}_n^2(\pi) \\
  =& \frac{1}{\lambda_n - \Lambda} \left( \int_0^{\pi} \varphi^2(x, \lambda_n) \,\du x + f'(\lambda_n) f_\downarrow^2(\lambda_n) + F'(\lambda_n) \varphi^2(\pi, \lambda_n) \right) \\
  =& \frac{\gamma_n}{\lambda_n - \Lambda}.
\end{split}
\end{equation*}
Since in the case when $\ind f = 0$ (respectively, $\ind F \le 0$) the second summands (respectively, the last summands) are absent, by definition, from the expressions for the norming constants and $\widehat{\varphi}_n(0) = 0$ (respectively, $\varphi(\pi, \lambda_n) \widehat{\varphi}_n(\pi) = 0$), the above relation between $\widehat{\gamma}_n$ and $\gamma_n$ holds also if $\ind f = 0$ or $\ind F \le 0$.

In the case $\ind f = -1$ we define $\widehat{\varphi}_n$ by
\begin{equation*}
  \widehat{\varphi}_n(x) := \varphi'(x, \lambda_n) - \frac{v'(x)}{v(x)} \varphi(x, \lambda_n).
\end{equation*}
Then $\widehat{\varphi}_n$ satisfies the initial condition $\widehat{\varphi}_n(0) = 1 \equiv \widehat{f}_\downarrow(\lambda_n)$,
and arguing as above, we establish the equality $\widehat{\gamma}_n = \gamma_n (\lambda_n - \Lambda)$.
\end{proof}

\begin{remark}
The motivation for choosing the values given in~(\ref{eq:mu}) for $\Lambda$ is due to the following observation. By choosing $v$ as an eigenfunction corresponding to the smallest eigenvalue we reduce the indices of both boundary coefficients. This is possible because of Lemmas~\ref{lem:lambda0} and \ref{lem:no_zero}. But if one of the boundary conditions is Dirichlet then this eigenfunction equals zero at an endpoint of the interval $[0, \pi]$. The above lemmas show that one can choose $\Lambda$ as any number strictly less than $\lambda_0$. The reason we do not choose $\lambda_0 - 1$ is that, as Theorem~\ref{thm:transformation} shows, the norming constant $\gamma_0$ is either multiplied or divided by $\lambda_0 - \Lambda$, depending on which one of the boundary conditions is Dirichlet. By choosing $\lambda_0 - 2$ (for definiteness) we will be able to determine this boundary condition in the next subsection.
\end{remark}

\subsubsection{An expression for $\mathring{\boldsymbol{\upgamma}}(q, f, F)$} \label{sss:gamma0}

We are now going to obtain an expression for the first norming constant $\gamma_0$ of the problem $\mathscr{P}(q, f, F)$ with $f, F \ne \infty$ in terms of the transformed problem $\mathscr{P}(\widehat{q}, \widehat{f}, \widehat{F})$, where $(\widehat{q}, \widehat{f}, \widehat{F}) := \widehat{\mathbf{T}} (q, f, F)$. This expression will be used in the next subsection to invert the action of $\widehat{\mathbf{T}}$.

Let $\widehat{C}(x, \lambda)$ and $\widehat{S}(x, \lambda)$ be the solutions of the equation
\begin{equation} \label{eq:SL_hat}
  -y''(x) + \widehat{q}(x)y(x) = \lambda y(x)
\end{equation}
satisfying the initial conditions
\begin{equation} \label{eq:C_S}
  \widehat{C}(0, \lambda) = \widehat{S}'(0, \lambda) = 1, \qquad \widehat{S}(0, \lambda) = \widehat{C}'(0, \lambda) = 0.
\end{equation}
It is easy to see that the function $1 / \varphi(x, \lambda_0)$ satisfies the equation (\ref{eq:SL_hat}) and the initial conditions
\begin{equation*}
  \frac{1}{\varphi(0, \lambda_0)} = \frac{1}{f_\downarrow(\lambda_0)}, \qquad \left( \frac{1}{\varphi(x, \lambda_0)} \right)'_{x=0} = \frac{f_\uparrow(\lambda_0)}{f_\downarrow^2(\lambda_0)} = \frac{f(\lambda_0)}{f_\downarrow(\lambda_0)}.
\end{equation*}
Thus
\begin{equation*}
  \frac{1}{\varphi(x, \lambda_0)} = \frac{1}{f_\downarrow(\lambda_0)} \left( \widehat{C}(x, \lambda_0) + f(\lambda_0) \widehat{S}(x, \lambda_0) \right).
\end{equation*}
Since $\widehat{S}(x, \lambda_0)$ and $1 / \varphi(x, \lambda_0)$ are both solutions of the equation (\ref{eq:SL_hat}), their Wronskian is constant:
\begin{equation*}
  \frac{\widehat{S}'(x, \lambda_0)}{\varphi(x, \lambda_0)} + \widehat{S}(x, \lambda_0) \frac{\varphi'(x, \lambda_0)}{\varphi^2(x, \lambda_0)} = \frac{\widehat{S}'(0, \lambda_0)}{\varphi(0, \lambda_0)} + \widehat{S}(0, \lambda_0) \frac{\varphi'(0, \lambda_0)}{\varphi^2(0, \lambda_0)} = \frac{1}{f_\downarrow(\lambda_0)},
\end{equation*}
and hence
\begin{equation*}
  \varphi^2(x, \lambda_0) = f_\downarrow(\lambda_0) \left( \widehat{S}(x, \lambda_0) \varphi(x, \lambda_0) \right)'.
\end{equation*}
If $\ind f \ge 1$ and $\ind F \ge 1$ then we have
\begin{equation*}
  f'(\lambda_0) = -\frac{1}{\widehat{f}(\lambda_0) + f(\lambda_0)}
\end{equation*}
and
\begin{equation*}
  F'(\lambda_0) = -\frac{1}{\widehat{F}(\lambda_0) + F(\lambda_0)} = - \left( \widehat{F}(\lambda_0) + \frac{\varphi'(\pi, \lambda_0)}{\varphi(\pi, \lambda_0)} \right)^{-1}.
\end{equation*}
Using the above identities and (\ref{eq:r_hat}) we calculate
\begin{equation*}
\begin{split}
  \gamma_0 &= \int_0^{\pi} \varphi^2(x, \lambda_0) \,\du x + f'(\lambda_0) f_\downarrow^2(\lambda_0) + F'(\lambda_0) \varphi^2(\pi, \lambda_0) \\
  &= f_\downarrow(\lambda_0) \widehat{S}(\pi, \lambda_0) \varphi(\pi, \lambda_0) - \frac{f_\downarrow^2(\lambda_0)}{\widehat{f}(\lambda_0) + f(\lambda_0)} - \varphi^2(\pi, \lambda_0) \left( \widehat{F}(\lambda_0) + \frac{\varphi'(\pi, \lambda_0)}{\varphi(\pi, \lambda_0)} \right)^{-1} \\
  &= \varphi(\pi, \lambda_0) \left( f_\downarrow(\lambda_0) \widehat{S}(\pi, \lambda_0) - \frac{\varphi^2(\pi, \lambda_0)}{\widehat{F}(\lambda_0) \varphi(\pi, \lambda_0) + \varphi'(\pi, \lambda_0)} \right) - \frac{f_\downarrow^2(\lambda_0)}{\widehat{f}(\lambda_0) + f(\lambda_0)} \\
  &= f_\downarrow(\lambda_0) \varphi^2(\pi, \lambda_0) \frac{\widehat{S}(\pi, \lambda_0) \widehat{F}(\lambda_0) - \widehat{S}'(\pi, \lambda_0)}{\widehat{F}(\lambda_0) \varphi(\pi, \lambda_0) + \varphi'(\pi, \lambda_0)} - \frac{f_\downarrow^2(\lambda_0)}{\widehat{f}(\lambda_0) + f(\lambda_0)} \\
  &= \frac{f_\downarrow^2(\lambda_0)}{\vphantom{\widehat{f}}\varkappa + f(\lambda_0)} - \frac{f_\downarrow^2(\lambda_0)}{\widehat{f}(\lambda_0) + f(\lambda_0)} = \left( \widehat{f}_\uparrow(\lambda_0) - \varkappa \widehat{f}_\downarrow(\lambda_0) \right) \frac{\widehat{f}_\uparrow(\lambda_0) + f(\lambda_0) \widehat{f}_\downarrow(\lambda_0)}{\varkappa + f(\lambda_0)},
\end{split}
\end{equation*}
where
\begin{equation*}
  \varkappa := \frac{\widehat{C}'(\pi, \lambda_0) - \widehat{C}(\pi, \lambda_0) \widehat{F}(\lambda_0)}{\widehat{S}'(\pi, \lambda_0) - \widehat{S}(\pi, \lambda_0) \widehat{F}(\lambda_0)}.
\end{equation*}
One can easily verify that this equality holds for the case $\ind f = 0$ too. If $\ind F = 0$ then $\widehat{F} = \infty$, and we only need to replace the above expression for $\varkappa$ by $\varkappa := \widehat{C}(\pi, \lambda_0) / \widehat{S}(\pi, \lambda_0)$.

\subsection{Inverse transformation between problems} \label{ss:inverseisospectral}

By applying the transformation $\widehat{\mathbf{T}}$ to a problem $\mathscr{P}(q, f, F)$ of the form (\ref{eq:SL})-(\ref{eq:boundary}) we obtain a new problem $\mathscr{P}(\widehat{q}, \widehat{f}, \widehat{F})$ of the same form. Now we want to restore the original problem $\mathscr{P}(q, f, F)$ from the transformed problem $\mathscr{P}(\widehat{q}, \widehat{f}, \widehat{F})$. As we will see below, in order to be able to determine the original problem we need some more information, e.g., the smallest eigenvalue $\lambda_0$ and the corresponding norming constant $\gamma_0$ of the problem $\mathscr{P}(q, f, F)$. But first we need to determine whether one of $f$ and $F$ is $\infty$ or not. Theorem~\ref{thm:transformation} shows that $\mathring{\boldsymbol{\uplambda}}(q, f, F) = \mathring{\boldsymbol{\uplambda}}(\widehat{q}, \widehat{f}, \widehat{F})$ if and only if one of the boundary conditions of the problem $\mathscr{P}(q, f, F)$ is Dirichlet. In this case the same theorem together with (\ref{eq:mu}) also tells us which of the two boundary conditions is Dirichlet, and the value of $v' / v$ at one of the endpoints of the interval $[0, \pi]$ can be immediately found from~(\ref{eq:q_f_F_hat}). In the case when none of the boundary conditions is Dirichlet, this value can be found from the expression for $\mathring{\boldsymbol{\upgamma}}(q, f, F)$ in terms of $\mathscr{P}(\widehat{q}, \widehat{f}, \widehat{F})$. Knowing this value, $v$ can be uniquely (up to a constant multiple) determined by the fact that $1 / v$ satisfies the equation (\ref{eq:SL_hat}) with $\lambda = \Lambda$.

With these considerations in mind, we define the transformation
$$\widetilde{\mathbf{T}} \colon \widetilde{\mathcal{S}} \to \mathscr{L}_1(0, \pi) \times \mathscr{R} \times \mathscr{R},\ (\mu, \nu, q, f, F) \mapsto (\widetilde{q}, \widetilde{f}, \widetilde{F})$$
on the union
\begin{equation*}
  \widetilde{\mathcal{S}} := \widetilde{\mathcal{S}}_1 \cup \widetilde{\mathcal{S}}_2 \cup \widetilde{\mathcal{S}}_3
\end{equation*}
of the sets
\begin{equation*}
  \widetilde{\mathcal{S}}_1 := \left\{ (\mu, \nu, q, f, F) \colon q \in \mathscr{L}_1(0, \pi),\ f, F \in \mathscr{R},\ \mu < \mathring{\boldsymbol{\uplambda}}(q, f, F),\ \nu > 0 \right\},
\end{equation*}
\begin{multline*}
  \widetilde{\mathcal{S}}_2 := \Big\{ (\mu, \nu, q, f, F) \colon q \in \mathscr{L}_1(0, \pi),\ f \in \mathscr{R}_0,\ F \in \mathscr{R}, \\
  \mu = \mathring{\boldsymbol{\uplambda}}(q, f, F),\ \nu = \mathring{\boldsymbol{\upgamma}}(q, f, F) / 2 \Big\}
\end{multline*}
and
\begin{multline*}
  \widetilde{\mathcal{S}}_3 := \Big\{ (\mu, \nu, q, f, F) \colon q \in \mathscr{L}_1(0, \pi),\ f \in \mathscr{R},\ F \in \mathscr{R}_0, \\
  \mu = \mathring{\boldsymbol{\uplambda}}(q, f, F),\ \nu = 2 \mathring{\boldsymbol{\upgamma}}(q, f, F) \Big\}
\end{multline*}
as follows. Let $(\mu, \nu, q, f, F) \in \widetilde{\mathcal{S}}$.

If $\mu < \mathring{\boldsymbol{\uplambda}}(q, f, F)$ we denote $\Lambda := \mu$ and
\begin{equation*}
  \varkappa := \frac{C'(\pi, \mu) - C(\pi, \mu) F(\mu)}{S'(\pi, \mu) - S(\pi, \mu) F(\mu)}
\end{equation*}
(in the case when $F = \infty$ this is understood as $\varkappa := C(\pi, \mu) / S(\pi, \mu)$). Lemma~\ref{lem:lambda0} implies $\mu < \mathring{\boldsymbol{\uplambda}}(q, \infty, F)$, and hence $\varkappa$ is well-defined. Arguing as in the proof of Lemma~\ref{lem:lambda0} we see that $\mathscr{P}(q, \varkappa, F)$ has only one eigenvalue not exceeding $\mathring{\boldsymbol{\uplambda}}(q, \infty, F)$, and hence $\mu = \mathring{\boldsymbol{\uplambda}}(q, \varkappa, F)$. The same proof also shows that if $f \ne \infty$ then
$$
  f(\mu) < f \left( \mathring{\boldsymbol{\uplambda}}(q, f, F) \right) = -\frac{\psi' \left( 0, \mathring{\boldsymbol{\uplambda}}(q, f, F) \right)}{\psi \left( 0, \mathring{\boldsymbol{\uplambda}}(q, f, F) \right)} < -\frac{\psi'(0, \mu)}{\psi(0, \mu)} = \varkappa.
$$
The function
\begin{equation*}
  \gamma(t) := \left( f_\uparrow(\mu) - \varkappa f_\downarrow(\mu) \right) \frac{f_\uparrow(\mu) + t f_\downarrow(\mu)}{\varkappa + t}
\end{equation*}
is strictly monotone decreasing from $+\infty$ to $0$ as $t$ increases from $-\varkappa$ to $-f(\mu)$. Thus there is a unique $\tau \in \left( -\varkappa, -f(\mu) \right)$ such that $\gamma(\tau) = \nu$. We denote by $u$ the solution of (\ref{eq:SL}) with $\lambda = \mu$, $u(0) = 1$ and $u'(0) = \tau$. Lemma~\ref{lem:lambda0} implies that $\mu = \mathring{\boldsymbol{\uplambda}}(q, \varkappa, F) \le \mathring{\boldsymbol{\uplambda}}(q, \varkappa, \infty) < \mathring{\boldsymbol{\uplambda}}(q, -\tau, \infty)$, and hence Lemma~\ref{lem:no_zero} shows that $u$ has no zeros on $[0,\pi]$. Moreover, if $F \ne \infty$ then using the asymptotics of the solutions $S$ and $S'$ we obtain that the denominator of the above expression for $\varkappa$ is strictly positive, and thus $\tau > -\varkappa$ implies $F(\mu) < u'(\pi) / u(\pi)$.

If $\mu = \mathring{\boldsymbol{\uplambda}}(q, f, F)$, then either $\nu = \mathring{\boldsymbol{\upgamma}}(q, f, F) / 2$ or $\nu = 2 \mathring{\boldsymbol{\upgamma}}(q, f, F)$. In the former case $f$ is constant and we denote $u := \varphi(x, \mu - 2)$. In the latter case $F$ is constant and this time we denote $u := \psi(x, \mu - 2)$. In both cases $u$ has no zeros on $[0,\pi]$ by Lemma~\ref{lem:no_zero}, and we set $\Lambda := \mu - 2$.

Finally, we define
$$
  \widetilde{q} := q - 2 \left( \frac{u'}{u} \right)', \qquad \widetilde{f} := \boldsymbol{\Theta} \left( \Lambda, -\frac{u'(0)}{u(0)}, f \right), \qquad \widetilde{F} := \boldsymbol{\Theta} \left( \Lambda, \frac{u'(\pi)}{u(\pi)}, F \right).
$$

Now we prove that, in a sense, the two transformations that we defined in this and the previous subsections are inverses of each other.

\begin{theorem} \label{thm:inverse}
The transformations $\widehat{\mathbf{T}}$ and $\widetilde{\mathbf{T}}$ are inverses of each other in the sense that if $(q, f, F) \in \widehat{\mathcal{S}}$ and $(\widehat{q}, \widehat{f}, \widehat{F}) = \widehat{\mathbf{T}}(q, f, F)$ then
$$
  \widetilde{\mathbf{T}} \left( \mathring{\boldsymbol{\uplambda}}(q, f, F), \mathring{\boldsymbol{\upgamma}}(q, f, F), \widehat{q}, \widehat{f}, \widehat{F} \right) = (q, f, F),$$
and conversely if $(\mu, \nu, q, f, F) \in \widetilde{\mathcal{S}}$ then $\widehat{\mathbf{T}} \widetilde{\mathbf{T}}(\mu, \nu, q, f, F) = (q, f, F)$.
\end{theorem}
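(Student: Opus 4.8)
The plan is to recognize that both $\widehat{\mathbf{T}}$ and $\widetilde{\mathbf{T}}$ are instances of one elementary operation which is its own inverse once the auxiliary solution is replaced by its reciprocal. For a number $\Lambda$ and a nowhere-vanishing solution $w$ of $-w''+qw=\Lambda w$, write
$\mathbf{D}_{\Lambda,w}(q,f,F):=\bigl(q-2(w'/w)',\,\boldsymbol{\Theta}(\Lambda,-w'(0)/w(0),f),\,\boldsymbol{\Theta}(\Lambda,w'(\pi)/w(\pi),F)\bigr)$.
Comparing with (\ref{eq:q_f_F_hat}) and with the definition of $\widetilde{\mathbf{T}}$, we have $\widehat{\mathbf{T}}(q,f,F)=\mathbf{D}_{\Lambda,v}(q,f,F)$ and $\widetilde{\mathbf{T}}(\mu,\nu,q,f,F)=\mathbf{D}_{\Lambda,u}(q,f,F)$ for the respective choices of $\Lambda$, $v$, $u$. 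The key observation is that $1/w$ solves the transformed equation at the \emph{same} $\Lambda$, that $(1/w)'/(1/w)=-w'/w$, and hence, applying (\ref{eq:ThetaTheta}) on each boundary separately, $\mathbf{D}_{\Lambda,1/w}\circ\mathbf{D}_{\Lambda,w}=\mathrm{id}$; moreover $\mathbf{D}_{\Lambda,w}$ depends on $w$ only through its logarithmic derivative, so $\mathbf{D}_{\Lambda,w}=\mathbf{D}_{\Lambda,cw}$ for $c\neq0$. Consequently the whole theorem reduces, in each direction, to checking that the second transformation selects the same $\Lambda$ and a scalar multiple of the reciprocal solution.

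For the first assertion I would split according to whether $f$ or $F$ equals $\infty$, using Theorem~\ref{thm:transformation} to read off $\mathring{\boldsymbol{\uplambda}}(\widehat q,\widehat f,\widehat F)$ and $\mathring{\boldsymbol{\upgamma}}(\widehat q,\widehat f,\widehat F)$. When $f,F\neq\infty$ this gives $\mathring{\boldsymbol{\uplambda}}(\widehat q,\widehat f,\widehat F)=\lambda_1>\lambda_0=\mathring{\boldsymbol{\uplambda}}(q,f,F)$, so the data lie in $\widetilde{\mathcal{S}}_1$ and $\widetilde{\mathbf{T}}$ takes $\Lambda=\lambda_0$, matching (\ref{eq:mu}); when $f=\infty$ (resp. $F=\infty$) the computed values of $\mathring{\boldsymbol{\uplambda}}$ and $\mathring{\boldsymbol{\upgamma}}$ place the data in $\widetilde{\mathcal{S}}_2$ (resp. $\widetilde{\mathcal{S}}_3$) with $\Lambda=\lambda_0-2$, again matching (\ref{eq:mu}). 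It then remains to identify $u$ with a multiple of $1/v$. In $\widetilde{\mathcal{S}}_1$ the number $\varkappa$ built into $\widetilde{\mathbf{T}}$ is literally the $\varkappa$ of Subsection~\ref{sss:gamma0}, so the equation $\gamma(\tau)=\mathring{\boldsymbol{\upgamma}}(q,f,F)$ is solved by $\tau=f(\lambda_0)=-v'(0)/v(0)$; hence the solution $u$ with $u(0)=1$, $u'(0)=\tau$ shares the logarithmic derivative of $1/v$ at $0$ and equals $f_\downarrow(\lambda_0)/v$. In $\widetilde{\mathcal{S}}_2$ and $\widetilde{\mathcal{S}}_3$ one checks directly from (\ref{eq:phi_psi}) that the $u=\varphi(\cdot,\Lambda)$ or $\psi(\cdot,\Lambda)$ of the potential $\widehat q$ has the same logarithmic derivative as $1/v$ at the relevant endpoint. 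In every case $u\propto1/v$, and the reduction above finishes the argument.

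For the converse I must instead show that $\widehat{\mathbf{T}}$, applied to $(\widetilde q,\widetilde f,\widetilde F)=\widetilde{\mathbf{T}}(\mu,\nu,q,f,F)$, re-selects the same $\Lambda$ together with a multiple of $1/u$; equivalently that $\mathring{\boldsymbol{\uplambda}}(\widetilde q,\widetilde f,\widetilde F)=\mu$. I would establish this uniformly by a spectral count. The same routine calculation as in the proof of Theorem~\ref{thm:transformation}, now with $u$ in place of $v$, shows that $\varphi'(x,\lambda_n)-\frac{u'(x)}{u(x)}\varphi(x,\lambda_n)$ is an eigenfunction of $\mathscr{P}(\widetilde q,\widetilde f,\widetilde F)$ for every $\lambda_n\neq\Lambda$, so all such $\lambda_n$ belong to its spectrum. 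The inequalities secured in the construction of $\widetilde{\mathbf{T}}$ (on the left $\tau>f(\mu)$, on the right $F(\mu)<u'(\pi)/u(\pi)$, with the analogous statements in $\widetilde{\mathcal{S}}_2$, $\widetilde{\mathcal{S}}_3$) decide through Lemma~\ref{lem:f_hat} whether each boundary index rises or falls by one, hence fix $\ind\widetilde f+\ind\widetilde F$; Lemma~\ref{lem:asymptotics} then pins down the counting function and excludes extra eigenvalues. Finally one asks whether $1/u$ satisfies both transformed boundary conditions at $\Lambda$: in $\widetilde{\mathcal{S}}_1$ it does, contributing the extra bottom eigenvalue $\mu$; in $\widetilde{\mathcal{S}}_2$, $\widetilde{\mathcal{S}}_3$ it fails the Dirichlet condition at one endpoint, so $\Lambda=\mu-2$ is not an eigenvalue and the smallest one is $\lambda_0=\mu$. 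Either way $\mathring{\boldsymbol{\uplambda}}(\widetilde q,\widetilde f,\widetilde F)=\mu$, so $\widehat{\mathbf{T}}$ uses $\Lambda$ and the corresponding eigen- or auxiliary solution, which is $\propto1/u$, and the reduction closes the loop.

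The involution identities themselves are immediate from (\ref{eq:ThetaTheta}) and the Darboux involution recalled in the Introduction, so the real work is the bookkeeping of the second direction: establishing $\mathring{\boldsymbol{\uplambda}}(\widetilde q,\widetilde f,\widetilde F)=\mu$ \emph{without} invoking the oscillation theorem of Subsection~\ref{ss:oscillation}, which is only proved later using the present result. This is precisely why I route the identification of the smallest eigenvalue through the eigenfunction correspondence and the index/asymptotic count of Lemmas~\ref{lem:f_hat} and~\ref{lem:asymptotics} rather than through a direct oscillation argument. The most delicate points are the Dirichlet subcases $\widetilde{\mathcal{S}}_2$ and $\widetilde{\mathcal{S}}_3$, where $\Lambda=\mu-2$ and $1/u$ is a non-eigen solution that must nevertheless be matched with the $\psi(\cdot,\Lambda)$ or $\varphi(\cdot,\Lambda)$ that $\widehat{\mathbf{T}}$ employs, together with keeping the two boundaries' index shifts consistent throughout the case analysis.
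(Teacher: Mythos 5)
Your proposal is correct and follows essentially the same route as the paper: identify $\tau=f(\lambda_0)$ via the expression for $\mathring{\boldsymbol{\upgamma}}(q,f,F)$ from Subsubsection~\ref{sss:gamma0}, match $u$ with $f_\downarrow(\lambda_0)/v$ by uniqueness of solutions of (\ref{eq:SL_hat}) with given initial data, and close with (\ref{eq:ThetaTheta}). The paper dispatches the converse and the Dirichlet cases as ``analogous,'' whereas you spell them out via the eigenfunction correspondence and the index/asymptotic count of Lemmas~\ref{lem:f_hat} and~\ref{lem:asymptotics}; this is consistent with how the paper itself argues in Theorems~\ref{thm:transformation} and~\ref{thm:inverse_transformation}, so it is added detail rather than a different method.
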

\begin{proof}
Denote $\lambda_0 := \mathring{\boldsymbol{\uplambda}}(q, f, F)$ and $(\widetilde{q}, \widetilde{f}, \widetilde{F}) := \widetilde{\mathbf{T}} \left( \lambda_0, \mathring{\boldsymbol{\upgamma}}(q, f, F), \widehat{q}, \widehat{f}, \widehat{F} \right)$. If $f \ne \infty$ and $F \ne \infty$ then $\left( \lambda_0, \mathring{\boldsymbol{\upgamma}}(q, f, F), \widehat{q}, \widehat{f}, \widehat{F} \right) \in \widetilde{\mathcal{S}}_1$. Comparing the definition of $\widetilde{\mathbf{T}}$ with the expression for $\mathring{\boldsymbol{\upgamma}}(q, f, F)$ derived in Subsubsection~\ref{sss:gamma0}, we conclude that $\tau = f(\lambda_0)$. Thus the functions $f_\downarrow(\lambda_0) / v(x)$ and $u(x)$ satisfy
the equation (\ref{eq:SL_hat}) with $\lambda = \lambda_0$ and the same initial conditions, and hence $u(x) = f_\downarrow(\lambda_0) / v(x)$. Then
$$
  \frac{u'(x)}{u(x)} = -\frac{v'(x)}{v(x)},
$$
and thus
$$
  \widetilde{q}(x) = \widehat{q}(x) - 2\left( \frac{u'(x)}{u(x)} \right)' = q(x) - 2\left( \frac{v'(x)}{v(x)} \right)' - 2\left( \frac{u'(x)}{u(x)} \right)' = q(x).
$$
Finally, the identity~(\ref{eq:ThetaTheta}) implies
$$
\widetilde{f} = \boldsymbol{\Theta} \left( \lambda_0, -\frac{u'(0)}{u(0)}, \boldsymbol{\Theta} \left( \lambda_0, -\frac{v'(0)}{v(0)}, f \right) \right) = f
$$
and similarly $\widetilde{F} = F$. The remaining cases and the converse statement can be analyzed in an analogous manner.
\end{proof}

We can also prove an analogue of Theorem~\ref{thm:transformation} for the transformation $\widetilde{\mathbf{T}}$.
\begin{theorem} \label{thm:inverse_transformation}
If $\{ \lambda_n, \gamma_n \}_{n \ge 0}$ is the spectral data of the problem $\mathscr{P}(q, f, F)$ and $(\widetilde{q}, \widetilde{f}, \widetilde{F}) = \widetilde{\mathbf{T}}(\mu, \nu, q, f, F)$ then the spectral data of the problem $\mathscr{P}(\widetilde{q}, \widetilde{f}, \widetilde{F})$ is
$$
  \left\{ \lambda_n, \gamma_n (\lambda_n - \Lambda)^I \right\}_{n \ge -J},
$$
where $\lambda_{-1} := \mu$, $\gamma_{-1} := \nu$, and $I$ and $J$ are defined as
\begin{equation*}
  I := \ind \widetilde{f} - \ind f, \qquad J := \frac{\ind \widetilde{f} + \ind \widetilde{F}}{2} - \frac{\ind f + \ind F}{2}.
\end{equation*}
\end{theorem}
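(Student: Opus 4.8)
The plan is to reduce everything to Theorem~\ref{thm:transformation} by exploiting the fact, proved in Theorem~\ref{thm:inverse}, that $\widehat{\mathbf{T}}$ undoes $\widetilde{\mathbf{T}}$. Put $(\widetilde{q}, \widetilde{f}, \widetilde{F}) := \widetilde{\mathbf{T}}(\mu, \nu, q, f, F)$ and let $\{\widetilde{\lambda}_n, \widetilde{\gamma}_n\}_{n \ge 0}$ be the spectral data of $\mathscr{P}(\widetilde{q}, \widetilde{f}, \widetilde{F})$. By Theorem~\ref{thm:inverse} we have $\widehat{\mathbf{T}}(\widetilde{q}, \widetilde{f}, \widetilde{F}) = (q, f, F)$, so that $\mathscr{P}(q, f, F)$ is the direct transform of $\mathscr{P}(\widetilde{q}, \widetilde{f}, \widetilde{F})$ and Theorem~\ref{thm:transformation} expresses the spectral data of the former through that of the latter. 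The whole argument then amounts to reading that theorem backwards and solving for $\widetilde{\lambda}_n$ and $\widetilde{\gamma}_n$.

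First I would match the parameters. Lemma~\ref{lem:f_hat}, applied to the definition of $\widetilde{\mathbf{T}}$, shows that each of $\ind \widetilde{f} - \ind f$ and $\ind \widetilde{F} - \ind F$ equals $+1$ in the strict case of $\boldsymbol{\Theta}$ and $-1$ when the corresponding coefficient turns Dirichlet (the equality case), so that $I = \ind \widetilde{f} - \ind f$ and $J = \frac{1}{2}((\ind \widetilde{f} - \ind f) + (\ind \widetilde{F} - \ind F))$ are exactly the values that Theorem~\ref{thm:transformation} attaches to the passage $(\widetilde{f}, \widetilde{F}) \mapsto (f, F)$. For the base point, the proof of Theorem~\ref{thm:inverse} shows that the nodeless solution used by $\widehat{\mathbf{T}}$ on $\mathscr{P}(\widetilde{q}, \widetilde{f}, \widetilde{F})$ is a constant multiple of $1/u$, which solves (\ref{eq:SL}) with potential $\widetilde{q}$ at $\lambda = \Lambda$; hence the base point of this application of $\widehat{\mathbf{T}}$, given by (\ref{eq:mu}), coincides with the $\Lambda$ fixed in the construction. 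With these identifications Theorem~\ref{thm:transformation} gives that the spectral data of $\mathscr{P}(q, f, F)$ equals $\{\widetilde{\lambda}_n, \widetilde{\gamma}_n / (\widetilde{\lambda}_n - \Lambda)^I\}_{n \ge J}$; since $\{\widetilde{\lambda}_n\}_{n \ge J}$ and $\{\lambda_n\}_{n \ge 0}$ are the (strictly increasing) eigenvalues of one and the same problem, $\widetilde{\lambda}_{n+J} = \lambda_n$ and, solving for the norming constants, $\widetilde{\gamma}_{n+J} = \gamma_n (\lambda_n - \Lambda)^I$ for every $n \ge 0$. This is the asserted formula for the indices $n \ge 0$, and on $\widetilde{\mathcal{S}}_2 \cup \widetilde{\mathcal{S}}_3$, where $J = 0$, it already exhausts the spectrum.

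It remains to treat the additional datum present when $J = 1$, that is, on $\widetilde{\mathcal{S}}_1$, and this is the crux of the argument. Here $\mathscr{P}(\widetilde{q}, \widetilde{f}, \widetilde{F})$ carries exactly one eigenvalue, $\widetilde{\lambda}_0$, below $\widetilde{\lambda}_1 = \lambda_0$. I would locate it by verifying that $1/u$ satisfies both transformed boundary conditions at $\lambda = \mu$: a short computation with $\boldsymbol{\Theta}$ gives $\widetilde{f}(\mu) = u'(0)/u(0)$ and $\widetilde{F}(\mu) = -u'(\pi)/u(\pi)$, so that the logarithmic derivative of $1/u$ equals $-\widetilde{f}(\mu)$ at $0$ and $\widetilde{F}(\mu)$ at $\pi$; thus $\mu$ is an eigenvalue, and since $\mu < \lambda_0 = \widetilde{\lambda}_1$ it must be $\widetilde{\lambda}_0 = \lambda_{-1}$. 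The genuinely delicate point is its norming constant, for the displayed factor $(\lambda_n - \Lambda)^I$ degenerates to $0$ at $n = -1$ (there $\lambda_{-1} = \mu = \Lambda$), so that $\widetilde{\gamma}_0 = \nu$ cannot be read off the formula and must be obtained directly. For this I would apply the expression for the first norming constant derived in Subsubsection~\ref{sss:gamma0} to $\mathscr{P}(\widetilde{q}, \widetilde{f}, \widetilde{F})$ and its direct transform $\mathscr{P}(q, f, F)$; because $\widetilde{f}(\mu) = u'(0)/u(0) = \tau$, this expression coincides with the function $\gamma(t)$ built in the construction of $\widetilde{\mathbf{T}}$ evaluated at $t = \tau$, and since $\tau$ was chosen so that $\gamma(\tau) = \nu$ we conclude $\widetilde{\gamma}_0 = \mathring{\boldsymbol{\upgamma}}(\widetilde{q}, \widetilde{f}, \widetilde{F}) = \nu$. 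The sub-case $f = \infty$, in which $\widetilde{f}$ is constant and $\tau = 1/\nu - \varkappa$, is handled identically using the constant-coefficient branch of that expression, which completes the identification of all the spectral data.
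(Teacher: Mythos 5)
Your proposal is correct and follows essentially the same route as the paper: it invokes Theorem~\ref{thm:inverse} to realize $\mathscr{P}(q,f,F)$ as the $\widehat{\mathbf{T}}$-image of $\mathscr{P}(\widetilde{q},\widetilde{f},\widetilde{F})$, reads Theorem~\ref{thm:transformation} backwards for the indices $n\ge 0$, identifies $1/u$ as the eigenfunction at the extra eigenvalue $\mu$, and recovers its norming constant $\nu$ from the expression for $\mathring{\boldsymbol{\upgamma}}$ in Subsubsection~\ref{sss:gamma0} via $\widetilde{f}(\mu)=\tau$. The paper's own proof is exactly this argument in compressed form; your version merely supplies the omitted verifications (and correctly flags that the first norming constant must be computed directly rather than read off the displayed factor, which degenerates at $n=-1$).
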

\begin{proof}
If $\mu < \mathring{\boldsymbol{\uplambda}}(q, f, F)$ (i.e., $J = 1$) then one can easily verify that the function $1 / u$ is an eigenfunction of $\mathscr{P}(\widetilde{q}, \widetilde{f}, \widetilde{F})$ corresponding to the eigenvalue $\mu$. It follows from the definition of $\boldsymbol{\Theta}$ that $\widetilde{f}(\mu) = \tau$. Comparison of the definition of $\widetilde{\mathbf{T}}$ with the expression for $\mathring{\boldsymbol{\upgamma}}(q, f, F)$ derived in Subsubsection~\ref{sss:gamma0} gives $\mathring{\boldsymbol{\upgamma}}(\widetilde{q}, \widetilde{f}, \widetilde{F}) = \nu$. The rest of the proof follows readily from Theorems~\ref{thm:transformation} and \ref{thm:inverse}.
\end{proof}

\section{Applications} \label{sec:applications}

\subsection{Asymptotics of eigenvalues and norming constants} \label{ss:asymptotics}

As mentioned earlier, it is possible to obtain sharper asymptotic formulas for the spectral data of the problem $\mathscr{P}(q, f, F)$ by following the method of the proof of Lemma~\ref{lem:asymptotics}. However, this method requires a large amount of calculation, and has already been done in the case of constant boundary conditions (see, e.g., \cite[Theorem 1.1.3 and Remark 1.1.2]{FY2001}). Our next theorem shows that the transformation $\widehat{\mathbf{T}}$ allows us to extend them to the case of boundary conditions (\ref{eq:boundary}) with much less calculation and write them in a unified manner. But first we start with a preliminary lemma.

\begin{lemma} \label{lem:q_omega_Omega}
If $(\widehat{q}, \widehat{f}, \widehat{F}) = \widehat{\mathbf{T}} (q, f, F)$ then 
$$
  \frac{1}{2} \int_0^\pi q(x) \,\du x + \omega_1 + \Omega_1 = \frac{1}{2} \int_0^\pi \widehat{q}(x) \,\du x + \widehat{\omega}_1 + \widehat{\Omega}_1,
$$
where $\widehat{\omega}_1$ and $\widehat{\Omega}_1$ are the second coefficients of the polynomials $\boldsymbol{\upomega}_{\widehat{f}}$ and $\boldsymbol{\upomega}_{\widehat{F}}$ respectively.
\end{lemma}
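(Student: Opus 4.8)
The plan is to show that each of the three ingredients of $\widehat{\mathbf{T}}$ changes the three summands $\frac12\int_0^\pi q$, $\omega_1$ and $\Omega_1$ in a controlled way, and that the changes cancel. First I would dispose of the integral term. Since $\widehat{q} = q - 2(v'/v)'$, the fundamental theorem of calculus gives
\begin{equation*}
  \frac12\int_0^\pi \widehat{q}(x)\,\du x = \frac12\int_0^\pi q(x)\,\du x - \frac{v'(\pi)}{v(\pi)} + \frac{v'(0)}{v(0)}.
\end{equation*}
Substituting this into the asserted identity and cancelling $\frac12\int_0^\pi q$, the lemma reduces to the purely algebraic claim
\begin{equation*}
  (\omega_1 - \widehat{\omega}_1) + (\Omega_1 - \widehat{\Omega}_1) = \frac{v'(0)}{v(0)} - \frac{v'(\pi)}{v(\pi)}.
\end{equation*}

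Next I would decouple the two endpoints. Set $\tau_0 := -v'(0)/v(0)$ and $\tau_\pi := v'(\pi)/v(\pi)$; by (\ref{eq:q_f_F_hat}) these are exactly the second arguments of $\boldsymbol{\Theta}$ in $\widehat{f} = \boldsymbol{\Theta}(\Lambda, \tau_0, f)$ and $\widehat{F} = \boldsymbol{\Theta}(\Lambda, \tau_\pi, F)$. Thus the reduced claim is equivalent to the two separate identities $\widehat{\omega}_1 = \omega_1 + \tau_0$ and $\widehat{\Omega}_1 = \Omega_1 + \tau_\pi$, and it suffices to prove the following statement about the Nevanlinna transformation by itself: \emph{if $\widehat{f} = \boldsymbol{\Theta}(\mu, \tau, f)$ is one of the transformations occurring in $\widehat{\mathbf{T}}$, then the second coefficients of $\boldsymbol{\omega}_f$ and $\boldsymbol{\omega}_{\widehat{f}}$ satisfy $\widehat{\omega}_1 = \omega_1 + \tau$.} Here ``occurring in $\widehat{\mathbf{T}}$'' means either $f = \infty$, or $f \ne \infty$ and---as noted right after the definition of $\widehat{\mathbf{T}}$, where the index always drops by one---we are in the index-decreasing branch $\tau = f(\mu)$ of Lemma~\ref{lem:f_hat}. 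Applying this statement once with $\tau = \tau_0$ and once with $\tau = \tau_\pi$ yields the reduced claim.

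The remaining step is to prove $\widehat{\omega}_1 = \omega_1 + \tau$, and this carries the bulk of the (elementary) computation. From the definition of $\boldsymbol{\omega}_f$ one reads off that, after the monic normalization, $\omega_1$ equals the ratio of the leading coefficient of $f_\downarrow$ to that of $f_\uparrow$ when $\ind f$ is odd, and minus the ratio of the leading coefficient of $f_\uparrow$ to that of $f_\downarrow$ when $\ind f$ is even. I would then insert the formulas (\ref{eq:r_hat}) for $\widehat{f}_\uparrow$ and $\widehat{f}_\downarrow$ and track only the two leading coefficients in each parity. Since $\tau$ enters those numerators linearly, a short calculation shows that in both parities the monic normalization converts the $\tau$-dependent contribution into precisely an additive $+\tau$, giving $\widehat{\omega}_1 = \omega_1 + \tau$. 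The degenerate case $f = \infty$ is checked directly: there $\boldsymbol{\omega}_f \equiv 1$ so $\omega_1 = 0$, while $\widehat{f} = -\tau$ is constant with $\boldsymbol{\omega}_{\widehat{f}}(\lambda) = \lambda + \tau$, whence $\widehat{\omega}_1 = \tau$, in agreement.

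I expect the main obstacle to be this last step: the parity-dependent definition of $\boldsymbol{\omega}_f$ (through $\lfloor \ind f/2\rfloor$ and $\lceil \ind f/2\rceil$) forces the odd- and even-index cases to be handled separately, and one must keep careful track of the signs and of the non-vanishing of the relevant leading coefficients (which uses $\mu < \mathring{\boldsymbol{\uppi}}(f)$, so that $\tau = f(\mu)$ stays away from the horizontal asymptote of $f$ when $\ind f$ is even). Conceptually, however, the argument needs no analytic input beyond the fundamental theorem of calculus and the algebra recorded in Lemma~\ref{lem:f_hat}.
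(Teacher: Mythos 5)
Your proposal is correct and follows essentially the same route as the paper: both integrate $(v'/v)'$ over $[0,\pi]$ to handle the potential term and then reduce everything to the single additive identity $\widehat{\omega}_1 = \omega_1 + \tau$ for the transformation $\boldsymbol{\Theta}$ (the paper states it as $\widehat{\omega}_1 = \omega_1 + f(\lambda_0)$, computing the new coefficients $\widehat{h}_0$, $\widehat{h}$ of $\widehat{f}$ directly rather than reading off leading coefficients from~(\ref{eq:r_hat}), but the computation is the same). Your explicit uniform treatment of the $f = \infty$ case is a minor presentational improvement over the paper's ``the other cases can be analyzed in a similar way.''
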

\begin{proof}
We consider only the case $\ind f$, $\ind F \ge 0$. The other cases when $f = \infty$ or $F = \infty$ can be analyzed in a similar way. If $h_0 > 0$ then $\widehat{f}$ is of the form (\ref{eq:f_hat}) with $\widehat{h}_0 = 0$ and
$$
  \widehat{h} = - \frac{1}{h_0} - f(\lambda_0).
$$
If $\ind f > 0$ and $h_0 = 0$ then
$$
  \widehat{h}_0 = \frac{1}{f(\lambda_0) - h}.
$$
Finally, if $\ind f = 0$ then $\widehat{f} = \infty$. In all these cases $\widehat{\omega}_1 = \omega_1 + f(\lambda_0)$. Similarly $\widehat{\Omega}_1 = \Omega_1 + F(\lambda_0)$. Hence (\ref{eq:q_f_F_hat}) implies
\begin{equation*}
\begin{split}
  & \frac{1}{2} \int_0^\pi \widehat{q}(x) \,\du x + \widehat{\omega}_1 + \widehat{\Omega}_1 \\
  & = \frac{1}{2} \int_0^\pi q(x) \,\du x - \frac{\varphi'(\pi, \lambda_0)}{\varphi(\pi, \lambda_0)} + \frac{\varphi'(0, \lambda_0)}{\varphi(0, \lambda_0)} + \omega_1 + f(\lambda_0) + \Omega_1 + F(\lambda_0) \\
  & = \frac{1}{2} \int_0^\pi q(x) \,\du x + \omega_1 + \Omega_1.
\end{split}
\end{equation*}
\end{proof}

We are now in a position to prove

\begin{theorem} \label{thm:asymptotics}
The spectral data of the problem $\mathscr{P}(q, f, F)$ have the asymptotics
$$\begin{aligned}
  \sqrt{\lambda_n} &= n - \frac{\ind f + \ind F}{2} + \frac{1}{\pi n} \left( \frac{1}{2} \int_0^\pi q(x) \,\du x + \omega_1 + \Omega_1 \right) + \frac{\xi_n}{n}, \\
  \gamma_n &= \frac{\pi}{2} \left( n - \frac{\ind f + \ind F}{2} \right)^{2 \ind f} \left( 1 + \frac{\xi'_n}{n} \right),
\end{aligned}$$
where $\{\xi_n\}$, $\{\xi'_n\} = o(1)$ if $q \in \mathscr{L}_1(0, \pi)$ and $\{\xi_n\}$, $\{\xi'_n\} \in \ell_2$ if $q \in \mathscr{L}_2(0, \pi)$.
\end{theorem}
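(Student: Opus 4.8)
The plan is to argue by induction on $m := \max\{\ind f, \ind F\}$, using the transformation $\widehat{\mathbf{T}}$ to strip off one unit of index at each endpoint and thereby reduce to the already understood case of eigenvalue-independent boundary conditions. The base case is $m \le 0$, i.e. $\ind f, \ind F \le 0$: here both boundary conditions are constant or Dirichlet, and the asserted asymptotics are classical (see, e.g., \cite[Theorem 1.1.3 and Remark 1.1.2]{FY2001}, which already distinguishes the $\mathscr{L}_1$ remainder $o(1)$ from the $\mathscr{L}_2$ remainder $\ell_2$). The only thing to verify in the base case is that, after translating the usual conventions into the present notation, the correction reads $\tfrac12\int_0^\pi q + \omega_1 + \Omega_1$ (recall that a constant $f \equiv h$ gives $\omega_1 = -h$ and the Dirichlet case gives $\omega_1 = 0$), and that $\gamma_n \sim \tfrac{\pi}{2}\bigl(n - \tfrac{\ind f + \ind F}{2}\bigr)^{2 \ind f}$ in each of the four base subcases; discrepancies such as $\tfrac{1}{\pi n}$ versus $\tfrac{1}{\pi(n+1)}$ are $O(1/n^2)$ and are absorbed into the remainder. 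For the inductive step one observes that $m \ge 1$ forces $(q, f, F) \in \widehat{\mathcal{S}}$, so $\widehat{\mathbf{T}}$ applies, and by Lemma~\ref{lem:f_hat} the transformed indices satisfy $\max\{\ind \widehat{f}, \ind \widehat{F}\} = m - 1$, making the induction hypothesis available for $\mathscr{P}(\widehat{q}, \widehat{f}, \widehat{F})$.

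For the eigenvalues, Theorem~\ref{thm:transformation} states that the spectrum of $\mathscr{P}(\widehat{q}, \widehat{f}, \widehat{F})$ is $\{\lambda_n\}_{n \ge J}$ — the same numbers, reindexed by $J$ from (\ref{eq:J}). Writing the induction hypothesis for the transformed problem and substituting $\tfrac{\ind \widehat{f} + \ind \widehat{F}}{2} = \tfrac{\ind f + \ind F}{2} - J$ together with the invariance $\tfrac12\int_0^\pi \widehat{q} + \widehat{\omega}_1 + \widehat{\Omega}_1 = \tfrac12\int_0^\pi q + \omega_1 + \Omega_1$ supplied by Lemma~\ref{lem:q_omega_Omega}, I recover exactly the leading term $n - \tfrac{\ind f + \ind F}{2}$ and the correction $\tfrac{1}{\pi n}\bigl(\tfrac12\int_0^\pi q + \omega_1 + \Omega_1\bigr)$. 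When $J = 0$ the indexing is unchanged and the identity is immediate; when $J = 1$ one substitutes $n \mapsto n-1$, and the only work is to check that the resulting factors $\tfrac{n}{n-1}$ and the leftover $O(1/n^2)$ terms can be folded into a new remainder $\xi_n/n$ of the same class.

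For the norming constants, Theorem~\ref{thm:transformation} gives $\gamma_n = \widehat{\gamma}_n (\lambda_n - \Lambda)^I$ with $I$ as in (\ref{eq:I}), where $\Lambda$ is a fixed constant (equal to $\lambda_0$ or $\lambda_0 - 2$ by (\ref{eq:mu})). Here I feed in the eigenvalue asymptotics just established: squaring them and using that $\Lambda$ does not depend on $n$ yields $\lambda_n - \Lambda = \bigl(n - \tfrac{\ind f + \ind F}{2}\bigr)^2 \bigl(1 + O(1/n^2)\bigr)$. Combining this with the induction hypothesis $\widehat{\gamma}_n \sim \tfrac{\pi}{2}\bigl(n - \tfrac{\ind \widehat{f} + \ind \widehat{F}}{2}\bigr)^{2 \ind \widehat{f}}(1 + \widehat{\xi}'_n/n)$ and using $\ind \widehat{f} = \ind f - I$, the exponent becomes $2 \ind \widehat{f} + 2I = 2 \ind f$ while the constant $\tfrac{\pi}{2}$ is unchanged. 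This accounts for the otherwise puzzling fact that the exponent depends on $\ind f$ alone: each application of $\widehat{\mathbf{T}}$ shifts it by $2I$, entirely through the left endpoint. The case $J = 1$ (both conditions non-Dirichlet, so $I = 1$) is as above after the shift; the remaining cases have $J = 0$ with one Dirichlet condition, so $I = -1$ or $I = 1$ according to whether it is the left or right endpoint, and are handled identically.

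The main technical point — and the step I expect to demand the most care — is the propagation of the remainder classes. I must first confirm that $\widehat{\mathbf{T}}$ preserves the integrability class of the potential: the Riccati identity (\ref{eq:v}) gives $\widehat{q} = -q + 2\Lambda + 2(v'/v)^2$, and since $v$ has no zeros on $[0,\pi]$ the function $(v'/v)^2$ is bounded, so $\widehat{q} \in \mathscr{L}_1(0,\pi)$ (respectively $\mathscr{L}_2(0,\pi)$) whenever $q$ is. Hence the induction hypothesis applies in the correct class and the remainders $\{\widehat{\xi}_n\}, \{\widehat{\xi}'_n\}$ are $o(1)$ (respectively $\ell_2$). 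What then remains is the routine but delicate bookkeeping: after the index shift $n \mapsto n - J$, multiplication by bounded factors $\tfrac{n}{n-1}$, and absorption of the $O(1/n^2)$ corrections coming from $\lambda_n - \Lambda$, I must check that the final $\{\xi_n\}, \{\xi'_n\}$ stay $o(1)$ in the $\mathscr{L}_1$ case and $\ell_2$ in the $\mathscr{L}_2$ case. Since shifting indices, multiplying by bounded sequences, and adding $O(1/n) \subset \ell_2$ contributions all preserve both classes, this causes no difficulty beyond careful estimation.
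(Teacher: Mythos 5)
Your proposal is correct and follows essentially the same route as the paper: reduce to the constant-coefficient case by repeated application of $\widehat{\mathbf{T}}$ (you phrase it as induction on $\max\{\ind f,\ind F\}$, the paper as a finite chain of $K$ transformations) and transfer the asymptotics back step by step via Theorem~\ref{thm:transformation} and the invariance from Lemma~\ref{lem:q_omega_Omega}. Your explicit check that $\widehat{q}=-q+2\Lambda+2(v'/v)^2$ stays in the same integrability class is a point the paper leaves implicit, but otherwise the two arguments coincide.
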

\begin{proof}
We give the proof for the case $q \in \mathscr{L}_1(0, \pi)$; the case when $q \in \mathscr{L}_2(0, \pi)$ differs from it only in the form of the remainder terms. Consider the chain of problems $\mathscr{P}(q^{(k)}, f^{(k)}, F^{(k)})$ defined by
\begin{equation} \label{eq:P_k}
\begin{aligned}
  (q^{(0)}, f^{(0)}, F^{(0)}) &:= (q, f, F), \\
  (q^{(k)}, f^{(k)}, F^{(k)}) &:= \widehat{\mathbf{T}} (q^{(k-1)}, f^{(k-1)}, F^{(k-1)}), \qquad k = 1, 2, \ldots, K,
\end{aligned}
\end{equation}
where $K := \max \{ \ind f, \ind F \}$, and let $\omega^{(k)}_1$ and $\Omega^{(k)}_1$ denote the second coefficients of the polynomials $\boldsymbol{\upomega}_{f^{(k)}}$ and $\boldsymbol{\upomega}_{F^{(k)}}$ respectively. Then the last problem $\mathscr{P}(q^{(K)}, f^{(K)}, F^{(K)})$ has constant boundary conditions, and hence its eigenvalues have the asymptotics
\begin{equation*}
\begin{split}
  \sqrt{\lambda_n^{(K)}} = n &- \frac{\ind f^{(K)} + \ind F^{(K)}}{2} \\
  &+ \frac{1}{\pi n} \left( \frac{1}{2} \int_0^\pi q^{(K)}(x) \,\du x + \omega^{(K)}_1 + \Omega^{(K)}_1 \right) + o \left( \frac{1}{n} \right).
\end{split}
\end{equation*}
Let $I$ and $J$ be defined by (\ref{eq:I})-(\ref{eq:J}) with $f$ and $F$ replaced by $f^{(K-1)}$ and $F^{(K-1)}$ respectively. Using Theorem~\ref{thm:transformation} and Lemma~\ref{lem:q_omega_Omega} we calculate
\begin{equation*}
\begin{split}
  \sqrt{\lambda_n^{(K-1)}} &= \sqrt{\lambda_{n-J}^{(K)}} \\
  &= n - J - \frac{\ind f^{(K)} + \ind F^{(K)}}{2} \\
  &\phantom{{}= n{}} {}+ \frac{1}{\pi (n - J)} \left( \frac{1}{2} \int_0^\pi q^{(K)}(x) \,\du x + \omega^{(K)}_1 + \Omega^{(K)}_1 \right) + o \left( \frac{1}{n} \right) \\
  &= n - \frac{\ind f^{(K-1)} + \ind F^{(K-1)}}{2} \\
  &\phantom{{}= n{}} {}+ \frac{1}{\pi n} \left( \frac{1}{2} \int_0^\pi q^{(K-1)}(x) \,\du x + \omega^{(K-1)}_1 + \Omega^{(K-1)}_1 \right) + o \left( \frac{1}{n} \right),
\end{split}
\end{equation*}
where we used the obvious relation
$$
  \frac{1}{\pi (n - J)} = \frac{1}{\pi n} + O \left( \frac{1}{n^2} \right).
$$
Repeating this argument $K-1$ more times yields the above asymptotics for $\sqrt{\lambda_n}$.

In a similar manner, from
$$
  \gamma_n^{(K)} = \frac{\pi}{2} \left( n - \frac{\ind f^{(K)} + \ind F^{(K)}}{2} \right)^{2 \ind f^{(K)}} \left( 1 + o \left( \frac{1}{n} \right) \right),
$$
Theorem~\ref{thm:transformation} and Lemma~\ref{lem:asymptotics} we obtain
\begin{equation*}
\begin{split}
  \gamma_n^{(K-1)} &= \gamma_{n-J}^{(K)} \left( \lambda_n^{(K-1)} - \Lambda \right)^I \\
  &= \frac{\pi}{2} \left( n - J - \frac{\ind f^{(K)} + \ind F^{(K)}}{2} \right)^{2 \ind f^{(K)}} \\
  &\phantom{{}= \frac{\pi}{2}{}} \times \left( n - \frac{\ind f^{(K-1)} + \ind F^{(K-1)}}{2} \right)^{2I} \left( 1 + o \left( \frac{1}{n} \right) \right) \\
  &= \frac{\pi}{2} \left( n - \frac{\ind f^{(K-1)} + \ind F^{(K-1)}}{2} \right)^{2 \ind f^{(K-1)}} \left( 1 + o \left( \frac{1}{n} \right) \right).
\end{split}
\end{equation*}
Again, repeating this argument $K-1$ more times we get the above asymptotics for the sequence $\gamma_n$.
\end{proof}

\subsection{Oscillation of eigenfunctions} \label{ss:oscillation}

The Sturm oscillation theorem says that an eigenfunction corresponding to the $n$-th eigenvalue of the Sturm--Liouville problem with constant boundary conditions has exactly $n$ zeros in the open interval $(0, \pi)$ (see, e.g., \cite[Theorem 1.2.2]{FY2001}). Oscillation properties of the eigenfunctions of problems with boundary conditions dependent on the eigenvalue parameter have been studied, e.g., in~\cite[Appendix I]{BP1981}, \cite[Section 3]{BBW2002a}. By using the transformation $\widehat{\mathbf{T}}$, we will now extend these results to boundary value problems of the form (\ref{eq:SL})-(\ref{eq:boundary}). But first we need the following auxiliary result.

\begin{lemma} \label{lem:oscillation}
Let $J$ and $\widehat{\varphi}_n$ be defined by the formulas (\ref{eq:J}) and (\ref{eq:phi_hat}) respectively. If the function $\widehat{\varphi}_n(x)$ has $N$ zeros in $(0, \pi)$ then the function $\varphi(x, \lambda_n)$ has exactly $N + J + \boldsymbol{\Pi}_{\widehat{f}}(\lambda_n) + \boldsymbol{\Pi}_{\widehat{F}}(\lambda_n) - \boldsymbol{\Pi}_f(\lambda_n) - \boldsymbol{\Pi}_F(\lambda_n)$ zeros in $(0, \pi)$.
\end{lemma}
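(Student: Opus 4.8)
The plan is to realize both zero counts through a single auxiliary function and a Pr\"{u}fer-type phase. Write $y := \varphi(\cdot, \lambda_n)$ and let $v := \varphi(\cdot, \Lambda)$ (or $v := \psi(\cdot, \Lambda)$ when $f = \infty$) be the function used in the definition of $\widehat{\mathbf{T}}$. By Lemma~\ref{lem:no_zero} and Remark~\ref{rem:pi_f}, $v$ has no zeros on the whole closed interval $[0,\pi]$, so I may form $g := y/v$. Since $v \neq 0$, the zeros of $g$ in $(0,\pi)$ are exactly those of $\varphi(\cdot, \lambda_n)$, and a direct computation gives $g' = (v y' - v' y)/v^2$, which equals $\widehat{\varphi}_n$ up to a nonvanishing factor; hence the zeros of $g'$ in $(0,\pi)$ are exactly those of $\widehat{\varphi}_n$. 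Writing $M$ for the number of zeros of $\varphi(\cdot, \lambda_n)$ and keeping $N$ for that of $\widehat{\varphi}_n$, the assertion becomes $M - N = J + \boldsymbol{\Pi}_{\widehat{f}}(\lambda_n) + \boldsymbol{\Pi}_{\widehat{F}}(\lambda_n) - \boldsymbol{\Pi}_f(\lambda_n) - \boldsymbol{\Pi}_F(\lambda_n)$. Differentiating once more and using that $y$ and $v$ solve (\ref{eq:SL}) with parameters $\lambda_n$ and $\Lambda$, one checks that $g$ satisfies the self-adjoint equation $(v^2 g')' + (\lambda_n - \Lambda) v^2 g = 0$, which is a genuine Sturm--Liouville equation because $v^2 > 0$ and $\lambda_n - \Lambda > 0$ for every relevant $n$.

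For such an equation I would introduce the modified Pr\"{u}fer variables $g = \rho \sin\theta$, $v^2 g' = \rho \cos\theta$, for which $\theta' = v^{-2}\cos^2\theta + (\lambda_n - \Lambda) v^2 \sin^2\theta > 0$ throughout $[0,\pi]$. Since $\theta$ is strictly increasing, the zeros of $g$ in $(0,\pi)$ correspond to the values of $\theta$ in $\pi\mathbb{Z}$ and those of $g'$ to the values in $\tfrac{\pi}{2} + \pi\mathbb{Z}$ that lie strictly between $\theta(0)$ and $\theta(\pi)$; as these two grids interlace, counting how many of each type fall in $(\theta(0), \theta(\pi))$ yields $M - N = \tfrac12\bigl(\sigma(\theta(\pi)) - \sigma(\theta(0))\bigr)$, where $\sigma(\theta) := \sgn(\sin\theta \cos\theta)$ records in which half of its $\pi$-period the phase sits (with the appropriate one-sided value at grid points, i.e.\ when an endpoint is itself a zero of $g$ or of $g'$).

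It then remains to evaluate the two endpoint signs. At $x = 0$ the initial conditions (\ref{eq:phi_psi}) give $g(0) = f_\downarrow(\lambda_n)/f_\downarrow(\Lambda)$, while $v^2 g'(0) = (v y' - v' y)(0) = (\Lambda - \lambda_n) f_\downarrow(\Lambda) \widehat{f}_\downarrow(\lambda_n)$ via $\widehat{\varphi}_n(0) = \widehat{f}_\downarrow(\lambda_n)$; since $f_\downarrow(\Lambda) > 0$, $\Lambda - \lambda_n < 0$, $\sgn f_\downarrow(\lambda_n) = (-1)^{\boldsymbol{\Pi}_f(\lambda_n)}$ and $\sgn \widehat{f}_\downarrow(\lambda_n) = (-1)^{\boldsymbol{\Pi}_{\widehat{f}}(\lambda_n)}$, this gives $\sigma(\theta(0)) = (-1)^{\boldsymbol{\Pi}_f(\lambda_n) + \boldsymbol{\Pi}_{\widehat{f}}(\lambda_n) + 1}$. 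At $x = \pi$ I would instead use $\psi(\cdot, \lambda_n) = \beta_n \varphi(\cdot, \lambda_n)$ (and $\psi(\cdot, \Lambda) = \beta_0 \varphi(\cdot, \Lambda)$ when $f, F \neq \infty$) together with the identity (\ref{eq:r_hat}) applied to $F$, which rewrites $F_\downarrow(\Lambda) F_\uparrow(\lambda_n) - F_\uparrow(\Lambda) F_\downarrow(\lambda_n)$ as $(\lambda_n - \Lambda) F_\downarrow(\Lambda) \widehat{F}_\downarrow(\lambda_n)$; the factors of $\beta_n$ (and $\beta_0$) cancel in the product $\sgn(g(\pi)) \sgn(v^2 g'(\pi))$, leaving $\sigma(\theta(\pi)) = (-1)^{\boldsymbol{\Pi}_F(\lambda_n) + \boldsymbol{\Pi}_{\widehat{F}}(\lambda_n)}$. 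Substituting into $M - N = \tfrac12\bigl(\sigma(\theta(\pi)) - \sigma(\theta(0))\bigr)$ and using that the interlacing of the poles (Lemma~\ref{lem:f_hat}) forces $\boldsymbol{\Pi}_{\widehat{f}}(\lambda_n) - \boldsymbol{\Pi}_f(\lambda_n)$ and $\boldsymbol{\Pi}_{\widehat{F}}(\lambda_n) - \boldsymbol{\Pi}_F(\lambda_n)$ to lie in $\{-1, 0\}$ converts the half-sum of signs into exactly $J + (\boldsymbol{\Pi}_{\widehat{f}} - \boldsymbol{\Pi}_f) + (\boldsymbol{\Pi}_{\widehat{F}} - \boldsymbol{\Pi}_F)$ evaluated at $\lambda_n$.

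The delicate points, and the part I expect to cost the most care, are the boundary cases rather than the generic one. When an endpoint is Dirichlet ($f = \infty$ or $F = \infty$), or when $\lambda_n$ coincides with a pole of one of $f$, $\widehat{f}$, $F$, $\widehat{F}$, the relevant value $g$ or $g'$ vanishes at that endpoint, so $\theta$ sits exactly on the grid and one must insert the correct one-sided value of $\sigma$; it is precisely the asymmetry between the two endpoints---the factor $\Lambda - \lambda_n$ is negative at $x = 0$ but enters as the positive $\lambda_n - \Lambda$ at $x = \pi$---that produces the extra unit $J$ in the non-Dirichlet case and its absence when one condition is Dirichlet. I would therefore finish by checking the admissible sign patterns of $\bigl(\sigma(\theta(0)), \sigma(\theta(\pi))\bigr)$ against the possible values of $J$ and of the two pole-count differences, handling the Dirichlet endpoints (where $\ind \widehat{f} = 0$, so $\boldsymbol{\Pi}_{\widehat{f}} = \boldsymbol{\Pi}_f = 0$ and the endpoint contributes $\sigma = +1$) as the separate regime $J = 0$.
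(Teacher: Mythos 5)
Your proposal is correct, and its computational core coincides with the paper's: the two objects you call $v^2 g' = vy'-v'y$ and $g = y/v$ are exactly the functions $\widehat{\varphi}_n\varphi(\cdot,\Lambda)$ and $\varphi(\cdot,\lambda_n)/\varphi(\cdot,\Lambda)$ whose derivative identities the paper writes down, and your endpoint sign evaluations ($\sgn f_\downarrow(\lambda_n)\widehat{f}_\downarrow(\lambda_n)$ at $0$, and the Wronskian $F_\downarrow(\Lambda)F_\uparrow(\lambda_n)-F_\uparrow(\Lambda)F_\downarrow(\lambda_n)=(\lambda_n-\Lambda)F_\downarrow(\Lambda)\widehat{F}_\downarrow(\lambda_n)$ at $\pi$, with the $\beta_n$'s entering only as $\beta_n^2$) are precisely the quantities the paper uses to decide whether $\varphi(\cdot,\lambda_n)$ has a zero in $(0,x_1)$ and in $(x_N,\pi)$. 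Where you genuinely diverge is the counting device: the paper deduces interlacing of the zeros directly from the two identities by a Rolle-type argument, gets $N-1$ zeros in $(x_1,x_N)$, and then treats the two outer intervals by hand; you instead pass to the self-adjoint equation $(v^2g')'+(\lambda_n-\Lambda)v^2g=0$ and a strictly increasing Pr\"{u}fer phase, reducing the whole count to the single formula $M-N=\tfrac12\bigl(\sigma(\theta(\pi))-\sigma(\theta(0))\bigr)$. This buys a more uniform treatment of the degenerate configurations (Dirichlet endpoints, $\lambda_n$ equal to a pole of $f$, $\widehat{f}$, $F$ or $\widehat{F}$), which in the paper are absorbed into the phrases ``or $\widehat{f}_\downarrow(\lambda_n)=0$'' and the separate $J=0$ clause; the price is that you must fix and verify the one-sided values of $\sigma$ at grid points, which you correctly flag as the delicate step but only sketch. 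I checked the sign bookkeeping: with $f_\downarrow(\Lambda),F_\downarrow(\Lambda)>0$ (Remark~\ref{rem:pi_f}), $\lambda_n>\Lambda$, the interlacing $\mathring{\boldsymbol{\uppi}}(f)<\mathring{\boldsymbol{\uppi}}(\widehat{f})$ from Lemma~\ref{lem:f_hat} forcing $\boldsymbol{\Pi}_{\widehat{f}}-\boldsymbol{\Pi}_f\in\{-1,0\}$, and the right-/left-limit conventions at $0$ and $\pi$, your identity $\tfrac12\bigl((-1)^{B}+(-1)^{A}\bigr)=1+A+B$ for $A,B\in\{-1,0\}$ does reproduce $J+A+B$ in all admissible cases, including the Dirichlet regimes where one of $A$, $B$ vanishes identically. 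So the argument closes; to make it a complete proof you would only need to write out the one-sided $\sigma$-values explicitly in the four degenerate endpoint cases, which is exactly the case analysis the paper performs in its own language.
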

\begin{proof}
We give the proof for the case $f \ne \infty$; in the case when $f = \infty$ we only need to consider $\psi$ instead of $\varphi$. Let $\Lambda$ be defined by~(\ref{eq:mu}). The identities
$$
  \left( \widehat{\varphi}_n(x) \varphi(x, \Lambda) \right)' = \varphi(x, \lambda_n) \varphi(x, \Lambda), \qquad \left( \frac{\varphi(x, \lambda_n)}{\varphi(x, \Lambda)} \right)' = (\Lambda - \lambda_n) \frac{\widehat{\varphi}_n(x)}{\varphi(x, \Lambda)}
$$
imply that between any two zeros of the function $\widehat{\varphi}_n(x)$ there is a zero of the function $\varphi(x, \lambda_n)$ and vice versa. If we denote by $x_1$, $\ldots$, $x_N$ the zeros of the function $\widehat{\varphi}_n(x)$ in $(0, \pi)$ then the function $\varphi(x, \lambda_n)$ has $N-1$ zeros in $(x_1, x_N)$. Using the equalities
$$\widehat{\varphi}_n(0) \varphi(0, \Lambda) = \widehat{f}_\downarrow(\lambda_n) f_\downarrow(\Lambda), \qquad \varphi(0, \lambda_n) \varphi(0, \Lambda) = f_\downarrow(\lambda_n) f_\downarrow(\Lambda)$$
and the above identities one can easily check that $\varphi(x, \lambda_n)$ has a zero in $(0, x_1)$ if and only if $\widehat{f}_\downarrow(\lambda_n) f_\downarrow(\lambda_n) > 0$ or $\widehat{f}_\downarrow(\lambda_n) = 0$, i.e., if and only if the functions $f$ and $\widehat{f}$ have the same number of poles not exceeding $\lambda_n$. A similar assertion holds for the interval $(x_N, \pi)$ and the functions $F$ and $\widehat{F}$ if the boundary condition at $\pi$ is not Dirichlet (i.e., $J = 1$). Otherwise, if the boundary condition at $\pi$ is Dirichlet (i.e., $J = 0$), the function $\varphi(x, \lambda_n)$ does not have a zero in $(x_N, \pi)$, but $\boldsymbol{\Pi}_F(\lambda_n) = \boldsymbol{\Pi}_{\widehat{F}}(\lambda_n) = 0$. This concludes the proof.
\end{proof}

Now we are ready to prove our main oscillation result.

\begin{theorem} \label{thm:oscillation}
An eigenfunction of the problem $\mathscr{P}(q, f, F)$ corresponding to the eigenvalue $\lambda_n$ has exactly $n - \boldsymbol{\Pi}_f(\lambda_n) - \boldsymbol{\Pi}_F(\lambda_n)$ zeros in $(0, \pi)$.
\end{theorem}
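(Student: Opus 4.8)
The plan is to argue by induction on $K := \max \{ \ind f, \ind F \}$, using the direct transformation $\widehat{\mathbf{T}}$ to remove one unit of index at a time until the boundary conditions no longer depend on the eigenvalue parameter, where the classical Sturm oscillation theorem is available as the base case.

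For the base case $K \le 0$ both boundary conditions are constant or Dirichlet, so $f$ and $F$ have no poles and $\boldsymbol{\Pi}_f(\lambda_n) = \boldsymbol{\Pi}_F(\lambda_n) = 0$; the claimed number of zeros is then simply $n$, which is exactly the classical Sturm oscillation theorem (\cite[Theorem 1.2.2]{FY2001}). I would invoke this directly, observing that it covers every admissible combination of constant and Dirichlet conditions, including the Dirichlet--Dirichlet case that is excluded from the domain $\widehat{\mathcal{S}}$ of $\widehat{\mathbf{T}}$ but still falls under $K \le 0$.

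For the inductive step, assume $K \ge 1$ and that the assertion holds whenever this maximum is smaller than $K$. Set $(\widehat{q}, \widehat{f}, \widehat{F}) := \widehat{\mathbf{T}}(q, f, F)$. Since $\widehat{\mathbf{T}}$ lowers each nonnegative index by one and sends the index $-1$ to $0$, a short case check gives $\max \{ \ind \widehat{f}, \ind \widehat{F} \} = K - 1$, so the inductive hypothesis applies to $\mathscr{P}(\widehat{q}, \widehat{f}, \widehat{F})$. By Theorem~\ref{thm:transformation} the eigenvalues of the transformed problem are $\{ \lambda_n \}_{n \ge J}$, whence $\lambda_n$ is its $(n - J)$-th eigenvalue, and the eigenfunction $\widehat{\varphi}_n$ from~(\ref{eq:phi_hat}) corresponds to it. The inductive hypothesis then yields that $\widehat{\varphi}_n$ has exactly $N := (n - J) - \boldsymbol{\Pi}_{\widehat{f}}(\lambda_n) - \boldsymbol{\Pi}_{\widehat{F}}(\lambda_n)$ zeros in $(0, \pi)$.

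Finally, I would feed this value of $N$ into Lemma~\ref{lem:oscillation}, according to which $\varphi(x, \lambda_n)$ has $N + J + \boldsymbol{\Pi}_{\widehat{f}}(\lambda_n) + \boldsymbol{\Pi}_{\widehat{F}}(\lambda_n) - \boldsymbol{\Pi}_f(\lambda_n) - \boldsymbol{\Pi}_F(\lambda_n)$ zeros; after substitution the terms $\pm J$, $\pm \boldsymbol{\Pi}_{\widehat{f}}(\lambda_n)$ and $\pm \boldsymbol{\Pi}_{\widehat{F}}(\lambda_n)$ cancel in pairs, leaving precisely $n - \boldsymbol{\Pi}_f(\lambda_n) - \boldsymbol{\Pi}_F(\lambda_n)$, as desired. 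When $f = \infty$ the identical argument runs with $\psi$ in place of $\varphi$, exactly as in Lemma~\ref{lem:oscillation}, and simplicity of the eigenvalues guarantees that the choice of eigenfunction does not affect the zero count. The heavy geometric work—tracking how zeros are created or destroyed near the endpoints $0$ and $\pi$—has already been carried out in Lemma~\ref{lem:oscillation}, so the only real care needed here is the correct reindexing $n \mapsto n - J$ together with the telescoping cancellation, and I do not expect any further obstacle.
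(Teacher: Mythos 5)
Your proof is correct and is essentially the paper's own argument: the paper unrolls the full chain $\mathscr{P}(q^{(k)}, f^{(k)}, F^{(k)})$ down to constant boundary conditions and telescopes the counts from Lemma~\ref{lem:oscillation}, which is exactly your induction on $K = \max\{\ind f, \ind F\}$ written out explicitly. The base case (classical Sturm oscillation with vanishing $\boldsymbol{\Pi}$'s), the reindexing $n \mapsto n - J$ via Theorem~\ref{thm:transformation}, and the cancellation of the $\boldsymbol{\Pi}_{\widehat{f}}$, $\boldsymbol{\Pi}_{\widehat{F}}$ terms all match the paper.
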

\begin{proof}
Consider the problems $\mathscr{P}(q^{(k)}, f^{(k)}, F^{(k)})$ defined by (\ref{eq:P_k}). Let $J^{(k)}$ be defined by (\ref{eq:J}) with $f$ and $F$ replaced by $f^{(k)}$ and $F^{(k)}$ respectively. Since the last problem $\mathscr{P}(q^{(K)}, f^{(K)}, F^{(K)})$ has constant boundary conditions, its eigenfunction corresponding to the eigenvalue $\lambda_m^{(K)}$ has $m$ zeros in the open interval $(0, \pi)$ for each $m \ge 0$. On the other hand, the constancy of $f^{(K)}$ and $F^{(K)}$ implies $\boldsymbol{\Pi}_{f^{(K)}}(\lambda) \equiv 0$ and $\boldsymbol{\Pi}_{F^{(K)}}(\lambda) \equiv 0$, and hence the statement of the theorem holds in this case. By successive applications of Theorem~\ref{thm:transformation}, it follows that $\lambda_n = \lambda_{n - J'}^{(K)}$, where $J' := \sum_{k=0}^{K-1} J^{(k)}$.
Applying Lemma~\ref{lem:oscillation} successively to the problems $\mathscr{P}(q^{(K-1)}, f^{(K-1)}, F^{(K-1)})$, $\ldots$, $\mathscr{P}(q^{(0)}, f^{(0)}, F^{(0)})$, we obtain that an eigenfunction of $\mathscr{P}(q, f, F)$ corresponding to the eigenvalue $\lambda_n$ has
\begin{multline*}
  n - J' + \sum_{k=0}^{K-1} \left( J^{(k)} + \boldsymbol{\Pi}_{f^{(k+1)}}(\lambda_n) + \boldsymbol{\Pi}_{F^{(k+1)}}(\lambda_n) - \boldsymbol{\Pi}_{f^{(k)}}(\lambda_n) - \boldsymbol{\Pi}_{F^{(k)}}(\lambda_n) \right) \\ = n - \boldsymbol{\Pi}_f(\lambda_n) - \boldsymbol{\Pi}_F(\lambda_n)
\end{multline*}
zeros in $(0, \pi)$.
\end{proof}

\subsection{Regularized trace formulas} \label{ss:trace}

In this subsection, we apply our direct transformation to the calculation of regularized traces. We refer to~\cite{SP2006} for a relatively recent survey on this topic. Regularized traces of Sturm--Liouville problems with boundary conditions dependent on the eigenvalue parameter have been calculated in~\cite{EE2012}, \cite{G2005b}, \cite{K2006}.

Throughout this subsection we assume that $q \in \mathscr{W}_2^1[0, \pi]$. As in the case of constant boundary conditions (see, e.g., \cite[Remark 1.1.1]{FY2001}, \cite[Appendix II]{LG1964} or \cite[Theorem 1.5.1]{M1977}), one can obtain more precise asymptotics for the spectral data of $\mathscr{P}(q, f, F)$, depending on the smoothness of the potential $q$. In particular, if $q \in \mathscr{W}_2^1[0, \pi]$ then the eigenvalues of the problem $\mathscr{P}(q, f, F)$ have the asymptotics
\begin{equation*}
  \sqrt{\lambda_n} = n - a + \frac{b}{n - a} + \ell_2 \left( \frac{1}{n^2} \right),
\end{equation*}
where
\begin{equation*}
  a := \frac{\ind f + \ind F}{2}, \qquad b := \frac{1}{\pi} \left( \frac{1}{2} \int_0^\pi q(x) \,\du x + \omega_1 + \Omega_1 \right).
\end{equation*}
Hence the following series (called \emph{the first regularized trace}) converges:
$$
  \Trace(q, f, F) := \sum_{n < a} \lambda_n + \sum_{n = a} (\lambda_n - b) + \sum_{n > a} \left( \lambda_n - (n - a)^2 - 2b \right).
$$
The sum of this series has already been calculated in~\cite{EE2012}. Here we express $s_\lambda$ in terms of $q$, $\omega_1$, $\omega_2$, $\Omega_1$ and $\Omega_2$, and give another proof of these formulas, based on the use of the transformation $\widehat{\mathbf{T}}$.

Again, we begin with a preliminary lemma.

\begin{lemma} \label{lem:omega_2}
Let $(\widehat{q}, \widehat{f}, \widehat{F}) := \widehat{\mathbf{T}} (q, f, F)$, and let $\widehat{\omega}_1$ and $\widehat{\omega}_2$ be the second and third coefficients of the polynomial $\boldsymbol{\upomega}_{\widehat{f}}$. We have
\begin{equation*}
  \frac{(-1)^{\ind \widehat{f}} \widehat{q}(0)}{4} - \frac{\widehat{\omega}_1^2}{2} - \widehat{\omega}_2 = \frac{(-1)^{\ind f} q(0)}{4} - \frac{\omega_1^2}{2} - \omega_2 \mp \frac{\Lambda}{2},
\end{equation*}
where $\Lambda$ is defined by~(\ref{eq:mu}), and the plus sign is used if and only if $f = \infty$. A similar identity holds for the right endpoint.
\end{lemma}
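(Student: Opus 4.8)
The plan is to peel off the analytic content first and reduce the claim to a bookkeeping identity for the second and third coefficients of $\boldsymbol{\omega}_f$ and $\boldsymbol{\omega}_{\widehat{f}}$. Write $\tau := -v'(0)/v(0)$, so that $\widehat{f} = \boldsymbol{\Theta}(\Lambda, \tau, f)$ by~(\ref{eq:q_f_F_hat}); note $\tau = f(\Lambda)$ when $f \ne \infty$, while $(v'(0)/v(0))^2 = \tau^2$ always. Evaluating the first identity of~(\ref{eq:q_f_F_hat}) at $x = 0$ and inserting the Riccati identity~(\ref{eq:v}) at $x = 0$, namely $(v'/v)'(0) = q(0) - \Lambda - \tau^2$, I would obtain
\[
  \widehat{q}(0) = q(0) - 2\left( \frac{v'}{v} \right)'(0) = -q(0) + 2\Lambda + 2\tau^2 .
\]
Since $\widehat{\mathbf{T}}$ changes the index by $\pm 1$ (Lemma~\ref{lem:f_hat}), one has the parity flip $(-1)^{\ind \widehat{f}} = -(-1)^{\ind f}$, so the leading term on the left becomes $\tfrac14 (-1)^{\ind f}\big( q(0) - 2\Lambda - 2\tau^2 \big)$. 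The two $q(0)$-contributions then cancel, and the whole lemma is seen to be equivalent to the algebraic identity
\[
  \frac{\omega_1^2 - \widehat{\omega}_1^2}{2} + \omega_2 - \widehat{\omega}_2 = \frac{(-1)^{\ind f}(\Lambda + \tau^2)}{2} \mp \frac{\Lambda}{2},
\]
with the plus sign exactly when $f = \infty$.

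Next I would simplify the left-hand side using $\widehat{\omega}_1 = \omega_1 + \tau$, which is precisely the relation established in the proof of Lemma~\ref{lem:q_omega_Omega} (and which remains valid in the degenerate cases: for $f = \infty$ one has $\omega_1 = 0$, $\widehat{\omega}_1 = \tau$, while for constant $f \equiv \tau$ one has $\omega_1 = -\tau$, $\widehat{\omega}_1 = 0$). This collapses $\tfrac12(\omega_1^2 - \widehat{\omega}_1^2)$ to $-\omega_1\tau - \tfrac12\tau^2$, so that only the third coefficients survive and the identity reduces to a formula for $\omega_2 - \widehat{\omega}_2$. To evaluate $\widehat{\omega}_2$ I would use the description of the third coefficient recalled after the definition of $\boldsymbol{\omega}_f$ (the appropriate coefficient of $\pm f_\uparrow$ when the index is odd, of $\pm f_\downarrow$ when it is even), together with the explicit formulas~(\ref{eq:r_hat}) for $\widehat{f}_\uparrow$ and $\widehat{f}_\downarrow$. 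The mechanism producing the correction term is transparent: the numerators in~(\ref{eq:r_hat}) are divisible by $\lambda - \mu = \lambda - \Lambda$ precisely because $\tau = f(\Lambda)$, and performing this division shifts the second coefficient of the numerator by a multiple of $\Lambda$; this is the sole origin of the surviving $\mp \Lambda/2$.

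Carrying out the comparison, I expect to find $\omega_2 - \widehat{\omega}_2 = \omega_1\tau - \Lambda$ when $\ind f$ is odd (so that $h_0 > 0$) and $\omega_2 - \widehat{\omega}_2 = \omega_1\tau + \tau^2$ when $\ind f$ is even; substituting these, together with the value of $(-1)^{\ind f}$ and the minus sign, into the reduced identity makes both cases collapse to a true equality. The two degenerate indices are immediate: when $f$ is constant ($\widehat{f} = \infty$) or $f = \infty$ ($\widehat{f}$ constant) one has $\omega_2 = \widehat{\omega}_2 = 0$, and a one-line substitution---taking the plus sign precisely in the case $f = \infty$---confirms the identity and pins down the stated sign convention. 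Finally, the companion identity for the right endpoint, involving $\widehat{q}(\pi)$, $\Omega_1$, $\Omega_2$ and the plus sign if and only if $F = \infty$, follows verbatim after the reflection $x \mapsto \pi - x$, which interchanges the two endpoints and the roles of $f$ and $F$.

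The main obstacle is the coefficient computation in the second and third steps: tracking the second and third coefficients of $\boldsymbol{\omega}_f$ and $\boldsymbol{\omega}_{\widehat{f}}$ through~(\ref{eq:r_hat}) forces one to keep careful account of the floor/ceiling signs in the definition of $\boldsymbol{\omega}$, of the parity of $\ind f$ (equivalently, of whether $h_0 > 0$), and of the polynomial division by $\lambda - \Lambda$. All of the genuine content of the lemma---including the single unified sign $\mp \Lambda/2$---is carried by this parity-sensitive algebra rather than by any analytic estimate, so the burden is organizational: verifying that the same short computation closes in each of the four cases ($\ind f$ odd, $\ind f$ even and $\ge 2$, $\ind f = 0$, and $\ind f = -1$).
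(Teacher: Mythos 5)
Your proposal is correct and follows essentially the same route as the paper's proof: the Riccati identity~(\ref{eq:v}) gives $\widehat{q}(0) = -q(0) + 2\Lambda + 2f^2(\Lambda)$, the relation $\widehat{\omega}_1 = \omega_1 + f(\Lambda)$ is imported from the proof of Lemma~\ref{lem:q_omega_Omega}, and $\widehat{\omega}_2$ is extracted from~(\ref{eq:r_hat}) in the two parity cases, with your anticipated values $\omega_2 - \widehat{\omega}_2 = \omega_1 f(\Lambda) - \Lambda$ (odd index) and $\omega_2 - \widehat{\omega}_2 = \omega_1 f(\Lambda) + f^2(\Lambda)$ (even index) agreeing exactly with the paper's. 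The only differences are organizational---you first cancel the $q(0)$ terms and reduce everything to one algebraic identity before splitting into cases, whereas the paper substitutes directly in each case---and the single step you leave as ``expected'' (the polynomial division by $\lambda - \Lambda$ in~(\ref{eq:r_hat})) is stated with the correct outcome, so the argument closes.
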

\begin{proof}
The proof of Lemma~\ref{lem:q_omega_Omega} shows that $\widehat{\omega}_1 = \omega_1 + f(\Lambda)$. If $f \ne \infty$ then (\ref{eq:q_f_F_hat}), (\ref{eq:mu}) and (\ref{eq:v}) imply
$$
  \widehat{q}(0) = -q(0) + 2 \Lambda + 2 f^2(\Lambda).
$$
We start with the case when $h_0 > 0$ (i.e., $\ind f$ is odd and positive). From the first identity of (\ref{eq:r_hat}) we get $\widehat{\omega}_2 = \omega_2 - \omega_1 f(\Lambda) + \Lambda$. Thus
\begin{equation*}
\begin{split}
  & \frac{(-1)^{\ind \widehat{f}} \widehat{q}(0)}{4} - \frac{\widehat{\omega}_1^2}{2} - \widehat{\omega}_2 \\
  & = - \frac{q(0)}{4} + \frac{\Lambda + f^2(\Lambda)}{2} - \frac{\omega_1^2}{2} - \omega_1 f(\Lambda) - \frac{f^2(\Lambda)}{2} - \omega_2 + \omega_1 f(\Lambda) - \Lambda \\
  & = - \frac{q(0)}{4} - \frac{\omega_1^2}{2} - \omega_2 - \frac{\Lambda}{2}.
\end{split}
\end{equation*}
In the case when $h_0 = 0$ (i.e., $\ind f$ is even) from the second identity of (\ref{eq:r_hat}) we get $\widehat{\omega}_2 = \omega_2 - \omega_1 f(\Lambda) - f^2(\Lambda)$. Hence
\begin{equation*}
\begin{split}
  & \frac{(-1)^{\ind \widehat{f}} \widehat{q}(0)}{4} - \frac{\widehat{\omega}_1^2}{2} - \widehat{\omega}_2 \\
  & = \frac{q(0)}{4} - \frac{\Lambda + f^2(\Lambda)}{2} - \frac{\omega_1^2}{2} - \omega_1 f(\Lambda) - \frac{f^2(\Lambda)}{2} - \omega_2 + \omega_1 f(\Lambda) + f^2(\Lambda) \\
  & = \frac{q(0)}{4} - \frac{\omega_1^2}{2} - \omega_2 - \frac{\Lambda}{2}.
\end{split}
\end{equation*}

Finally, if $f = \infty$ then $\widehat{f}$ is constant, and thus $\widehat{\omega}_1 = -\widehat{f}$ and $\widehat{\omega}_2 = \omega_1 = \omega_2 = 0$. From (\ref{eq:q_f_F_hat}), (\ref{eq:mu}) and (\ref{eq:v}) we have
$$
  \widehat{q}(0) = -q(0) + 2 \Lambda + 2 \widehat{f}^2.
$$
Therefore
\begin{equation*}
  \frac{(-1)^{\ind \widehat{f}} \widehat{q}(0)}{4} - \frac{\widehat{\omega}_1^2}{2} - \widehat{\omega}_2 = - \frac{q(0)}{4} + \frac{\Lambda + \widehat{f}^2}{2} - \frac{\widehat{f}^2}{2} = - \frac{q(0)}{4} + \frac{\Lambda}{2}.
\end{equation*}
\end{proof}

\begin{theorem} \label{thm:trace}
The following identity holds:
$$
  \Trace(q, f, F) = \frac{(-1)^{\ind f} q(0)}{4} + \frac{(-1)^{\ind F} q(\pi)}{4} - \frac{\omega_1^2}{2} - \frac{\Omega_1^2}{2} - \omega_2 - \Omega_2.
$$
\end{theorem}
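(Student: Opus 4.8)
The plan is to denote the right-hand side of the asserted identity by
\[
  R(q, f, F) := \frac{(-1)^{\ind f} q(0)}{4} + \frac{(-1)^{\ind F} q(\pi)}{4} - \frac{\omega_1^2}{2} - \frac{\Omega_1^2}{2} - \omega_2 - \Omega_2,
\]
and to prove $\Trace(q, f, F) = R(q, f, F)$ by the same reduction scheme used in Theorem~\ref{thm:asymptotics}. First I would form the chain $\mathscr{P}(q^{(k)}, f^{(k)}, F^{(k)})$ of (\ref{eq:P_k}), which after $K = \max\{\ind f, \ind F\}$ applications of $\widehat{\mathbf{T}}$ terminates at a problem with boundary conditions independent of the eigenvalue parameter. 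The idea is to show that the quantity $\Trace - R$ is left unchanged by a single application of $\widehat{\mathbf{T}}$, so that it equals its value on the terminal problem, and then to invoke the classical trace formula for constant boundary conditions to conclude that this value is zero.

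To see that $R$ transforms simply, I would combine Lemma~\ref{lem:q_omega_Omega}, which asserts that the combination $\frac12\int_0^\pi q + \omega_1 + \Omega_1$ (equivalently $b$) is preserved, with Lemma~\ref{lem:omega_2} and its mirror image at the endpoint $\pi$ (obtained by the same computation with $\varphi$ replaced by $\psi$ and $q(0)$ by $q(\pi)$). Writing $R = R_f + R_F$ with $R_f := \frac{(-1)^{\ind f} q(0)}{4} - \frac{\omega_1^2}{2} - \omega_2$ and $R_F$ defined analogously, Lemma~\ref{lem:omega_2} gives $\widehat{R}_f = R_f \mp \Lambda/2$, with the plus sign exactly when $f = \infty$, and likewise for $\widehat{R}_F$. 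Thus when $J = 1$ (both boundary conditions genuinely depend on $\lambda$, so that both signs are minus and $\Lambda = \lambda_0$) the two summands each drop by $\Lambda/2$, whence $\widehat{R} = R - \Lambda$; and when $J = 0$ (exactly one condition is Dirichlet, and $\Lambda = \lambda_0 - 2$) the two sign choices are opposite and cancel, so that $\widehat{R} = R$.

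Next I would track the trace itself using Theorem~\ref{thm:transformation}. Under $\widehat{\mathbf{T}}$ the eigenvalues for $n \ge J$ are retained and the index drops to $\widehat{a} = a - J$, while $b$ is unchanged. Relabelling $\widehat{\lambda}_m = \lambda_{m+J}$ and substituting into the three-part sum defining $\Trace$, the terms with $n > a$ match exactly because $m - \widehat{a} = n - a$ and $2\widehat{b} = 2b$, and the isolated $n = a$ term also matches when it is present; when $J = 1$ the only survivor is the discarded eigenvalue $\lambda_0 = \Lambda$, giving $\Trace(q, f, F) - \Trace(\widehat{q}, \widehat{f}, \widehat{F}) = \Lambda$, while when $J = 0$ nothing is discarded and the two traces coincide. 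Comparing with the previous paragraph shows $\Delta\Trace = \Delta R$ in both cases, so $\Trace - R$ is invariant along the chain. Here one must check that every intermediate problem still has $\max\{\ind f^{(k)}, \ind F^{(k)}\} \ge 1$, so that in each $J = 1$ step one has $a \ge 1/2$ and the bottom term never falls out of the summation range $n \ge 0$; the degenerate configuration $\ind f = \ind F = 0$ arises only at the terminal problem.

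It then remains to evaluate $\Trace - R$ on the terminal problem, whose boundary conditions are constant (each of $\ind f^{(K)}, \ind F^{(K)}$ lying in $\{-1, 0\}$). For such problems the first regularized trace is the classical Gelfand--Levitan-type quantity, and a direct computation from the known sharp eigenvalue asymptotics (see \cite{LG1964}, \cite{M1977}) shows that it equals $R$, i.e. $\Trace - R = 0$ there, which closes the induction. The main obstacle I anticipate is precisely this bookkeeping in the trace sum: correctly matching the three pieces of $\Trace$ after the index shift $a \mapsto a - J$ and the eigenvalue relabelling, accounting for the appearance or disappearance of the isolated $n = a$ term, and keeping the sign in Lemma~\ref{lem:omega_2} synchronized with which endpoint carries the Dirichlet condition in each of the finitely many terminal sub-cases.
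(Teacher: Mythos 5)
Your proposal is correct and follows essentially the same route as the paper: propagate the identity along the chain (\ref{eq:P_k}) using Lemma~\ref{lem:q_omega_Omega}, Lemma~\ref{lem:omega_2} (and its mirror at $\pi$), and Theorem~\ref{thm:transformation}, with the trace gaining the discarded eigenvalue $\Lambda=\lambda_0$ when $J=1$ and the two $\mp\Lambda/2$ terms cancelling when $J=0$, then invoke the classical constant-boundary-condition trace formula at the terminal problem. Your "invariant quantity $\Trace-R$" phrasing and the explicit bookkeeping of the index shift $a\mapsto a-J$ are just a slightly more detailed rendering of the paper's induction.
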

\begin{proof}
Consider again the problems $\mathscr{P}(q^{(k)}, f^{(k)}, F^{(k)})$ defined by (\ref{eq:P_k}). Since the last problem $\mathscr{P}(q^{(K)}, f^{(K)}, F^{(K)})$ has constant boundary conditions, the identity in the statement of the theorem holds for this problem. We now consider the problem $\mathscr{P}(q^{(K-1)}, f^{(K-1)}, F^{(K-1)})$. If $\ind f^{(K-1)}$, $\ind F^{(K-1)} \ge 0$ then $\Lambda^{(K-1)} = \lambda_0^{(K-1)}$. In this case $\mathscr{P}(q^{(K-1)}, f^{(K-1)}, F^{(K-1)})$ has the extra eigenvalue $\lambda_0^{(K-1)}$ and hence
$$\Trace(q^{(K-1)}, f^{(K-1)}, F^{(K-1)}) = \lambda_0^{(K-1)} + \Trace(q^{(K)}, f^{(K)}, F^{(K)}).$$
Using Lemma~\ref{lem:omega_2} we calculate
\begin{equation*}
\begin{split}
  \Trace(q^{(K-1)}&, f^{(K-1)}, F^{(K-1)}) = \Trace(q^{(K)}, f^{(K)}, F^{(K)}) + \lambda_0^{(K-1)} \\
  &= \frac{(-1)^{\ind f^{(K)}} q^{(K)}(0)}{4} - \frac{\left( \omega_1^{(K)} \right)^2}{2} - \omega_2^{(K)} + \frac{\lambda_0^{(K-1)}}{2} \\
  &\phantom{\Trace} + \frac{(-1)^{\ind F^{(K)}} q^{(K)}(\pi)}{4} - \frac{\left( \Omega_1^{(K)} \right)^2}{2} - \Omega_2^{(K)} + \frac{\lambda_0^{(K-1)}}{2} \\
  &= \frac{(-1)^{\ind f^{(K-1)}} q^{(K-1)}(0)}{4} - \frac{\left( \omega_1^{(K-1)} \right)^2}{2} - \omega_2^{(K-1)} \\
  &\phantom{\Trace} + \frac{(-1)^{\ind F^{(K-1)}} q^{(K-1)}(\pi)}{4} - \frac{\left( \Omega_1^{(K-1)} \right)^2}{2} - \Omega_2^{(K-1)}.
\end{split}
\end{equation*}
If one of $f^{(K-1)}$ and $F^{(K-1)}$ is $\infty$ then the other one is not, and thus the terms $\mp \frac{\Lambda}{2}$ in Lemma~\ref{lem:omega_2} cancel each other out. Since in this case $\mathscr{P}(q^{(K-1)}, f^{(K-1)}, F^{(K-1)})$ and $\mathscr{P}(q^{(K)}, f^{(K)}, F^{(K)})$ are isospectral, we have $\Trace(q^{(K-1)}, f^{(K-1)}, F^{(K-1)}) = \Trace(q^{(K)}, f^{(K)}, F^{(K)})$. Applying now Lemma~\ref{lem:omega_2}, we arrive at the same value for $\Trace(q^{(K-1)}, f^{(K-1)}, F^{(K-1)})$. Repeating this argument $K - 1$ more times concludes the proof.
\end{proof}

\subsection{Inverse problem by spectral data} \label{ss:inverseproblems}

Inverse eigenvalue problems for the Sturm--Liouville equation with boundary conditions dependent on the eigenvalue parameter have been studied in many works. Most of them consider the case of linear dependence on the eigenvalue parameter (see, e.g., \cite{AOK2009}, \cite{BP1980}, \cite{G2005a}, \cite{K2004}, \cite{MC2014}). More general boundary conditions have also been studied (see, e.g., \cite{ABHM2007}, \cite{BBW2002b}, \cite{CF2009}, \cite{C2001}, \cite{FY2010}). Problems with coupled boundary conditions dependent on the eigenvalue parameter are considered in \cite{IN2016}, \cite{SSA2016}.

Throughout this and the next subsection, we consider problems $\mathscr{P}(q, f, F)$ with $q \in \mathscr{L}_2(0, \pi)$, since in this case the necessary and sufficient conditions for the solvability of the inverse problem are especially elegant (see also Remark~\ref{rem:L_1} at the end of the subsection). Theorem~\ref{thm:asymptotics} shows that the spectral data of a problem of the form (\ref{eq:SL})-(\ref{eq:boundary}) necessarily satisfies the conditions
\begin{equation} \label{eq:increasing}
  \lambda_0 < \lambda_1 < \lambda_2 < \ldots, \qquad \gamma_n > 0, \quad n \ge 0
\end{equation}
and
\begin{equation} \label{eq:asymptotics}
\begin{aligned}
  \sqrt{\lambda_n} &= n - \frac{M + N}{2} + \frac{\sigma}{\pi n} + \ell_2 \left( \frac{1}{n} \right), \\
  \gamma_n &= \frac{\pi}{2} \left( n - \frac{M + N}{2} \right)^{2M} \left( 1 + \ell_2 \left( \frac{1}{n} \right) \right)
\end{aligned}
\end{equation}
for some real $\sigma$ and integers $M$, $N \ge -1$. The aim of this subsection is to prove that these necessary conditions are also sufficient for sequences of real numbers $\{ \lambda_n \}_{n \ge 0}$ and $\{ \gamma_n \}_{n \ge 0}$ to be the eigenvalues and the norming constants of a problem of the form (\ref{eq:SL})-(\ref{eq:boundary}).

It is well known that for sequences of real numbers $\{ \lambda_n \}_{n \ge 0}$ and $\{ \gamma_n \}_{n \ge 0}$ satisfying these conditions with $-1 \le M$, $N \le 0$, there exists a unique boundary value problem $\mathscr{P}(q, f, F)$ with constant boundary conditions having these sequences as its spectral data (see, e.g., \cite[Theorem 1.5.2 and Remark 1.5.1]{FY2001}). In this case $M = -1$ (respectively, $N = -1$) if and only if $f = \infty$ (respectively, $F = \infty$), i.e., if and only if the boundary condition at $0$ (respectively, at $\pi$) is Dirichlet. The transformations defined in Section~\ref{sec:transformations} allow us to extend this result to the case of boundary conditions (\ref{eq:boundary}).

\begin{theorem} \label{thm:by_spectral_data}
Let $\{ \lambda_n \}_{n \ge 0}$ and $\{ \gamma_n \}_{n \ge 0}$ be sequences of real numbers satisfying the conditions (\ref{eq:increasing}) and (\ref{eq:asymptotics}). Then there exists a unique boundary value problem $\mathscr{P}(q, f, F)$ having the spectral data $\{ \lambda_n, \gamma_n \}_{n \ge 0}$.
\end{theorem}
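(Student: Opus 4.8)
The plan is to argue by induction on $K := \max\{ \ind f, \ind F \} = \max\{ M, N \}$, peeling off one level of the boundary data at a time with the transformation $\widehat{\mathbf{T}}$ and rebuilding the problem with $\widetilde{\mathbf{T}}$. First I would observe that the pair $(M, N)$ is uniquely determined by the data: the constant term in the first line of (\ref{eq:asymptotics}) fixes $M + N$, while the exponent $2M$ in the second line fixes $M$, and hence $N$. In particular, any problem $\mathscr{P}(q, f, F)$ realizing the given data must have $\ind f = M$ and $\ind F = N$, so that the index chain generated by repeated application of $\widehat{\mathbf{T}}$ is entirely determined by $(M, N)$, independently of the (as yet unknown) problem. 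For the base case $K \le 0$ both indices lie in $\{ -1, 0 \}$, the boundary conditions are constant, and the statement is exactly the classical inverse theorem for constant boundary conditions \cite[Theorem 1.5.2 and Remark 1.5.1]{FY2001}.

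For the inductive step $K \ge 1$ I would define a \emph{reduced} sequence $\{ \lambda'_n, \gamma'_n \}$ purely from the data, by applying the transformation that Theorem~\ref{thm:transformation} predicts for $\widehat{\mathbf{T}}$. The parameters are forced by $(M, N)$ alone: if $M, N \ge 0$ then, as in (\ref{eq:mu})--(\ref{eq:J}), one sets $\Lambda := \lambda_0$, $I := 1$, $J := 1$, deletes the smallest eigenvalue, and puts $\lambda'_n := \lambda_{n+1}$, $\gamma'_n := \gamma_{n+1} / (\lambda_{n+1} - \lambda_0)$; if exactly one of $M, N$ equals $-1$ (the only remaining possibility once $K \ge 1$ excludes both being $-1$), one sets $\Lambda := \lambda_0 - 2$, keeps all eigenvalues, and multiplies or divides each $\gamma_n$ by $(\lambda_n - \Lambda)$ according to the sign of $I$ in (\ref{eq:I}). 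Positivity $\gamma'_n > 0$ is immediate from $\lambda_n > \Lambda$. The heart of the step is to verify that $\{ \lambda'_n, \gamma'_n \}$ again satisfies (\ref{eq:increasing}) and (\ref{eq:asymptotics}), with the \emph{same} $\sigma$ and with reduced indices $(M', N')$ such that $\max\{ M', N' \} = K - 1$; granting this, the inductive hypothesis yields a unique problem $\mathscr{P}(q', f', F')$ with spectral data $\{ \lambda'_n, \gamma'_n \}$.

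It then remains to transport this back. For existence I would feed the deleted data into $\widetilde{\mathbf{T}}$: one checks that $(\lambda_0, \gamma_0, q', f', F')$ lands in the appropriate piece $\widetilde{\mathcal{S}}_1$, $\widetilde{\mathcal{S}}_2$ or $\widetilde{\mathcal{S}}_3$ of $\widetilde{\mathcal{S}}$, the membership conditions matching exactly the values of $\mathring{\boldsymbol{\uplambda}}(q', f', F')$ and $\mathring{\boldsymbol{\upgamma}}(q', f', F')$ dictated by the reduction (this is where the factors $\mathring{\boldsymbol{\upgamma}}/2$ and $2 \mathring{\boldsymbol{\upgamma}}$ in $\widetilde{\mathcal{S}}_2$, $\widetilde{\mathcal{S}}_3$ reappear). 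Theorem~\ref{thm:inverse_transformation} then shows that $\widetilde{\mathbf{T}}(\lambda_0, \gamma_0, q', f', F')$ has precisely the data $\{ \lambda_n, \gamma_n \}$. For uniqueness, if $\mathscr{P}(q, f, F)$ is any problem with the prescribed data, then $\ind f = M$ and $\ind F = N$, so $(q, f, F) \in \widehat{\mathcal{S}}$ and Theorem~\ref{thm:transformation} forces $\widehat{\mathbf{T}}(q, f, F)$ to realize the same reduced data $\{ \lambda'_n, \gamma'_n \}$; by induction this problem equals $\mathscr{P}(q', f', F')$, and Theorem~\ref{thm:inverse} recovers $(q, f, F)$ uniquely from it.

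I expect the only genuine obstacle to be the (routine but slightly delicate) verification in the second paragraph that (\ref{eq:asymptotics}) survives the reduction. The two points needing care are the index shift $\lambda'_n = \lambda_{n+1}$ in the case $J = 1$, where one rewrites $(n+1) - (M+N)/2 = n - (M'+N')/2$ and absorbs $\sigma/(\pi(n+1)) - \sigma/(\pi n)$ into the remainder, and the control of the $\ell_2$-remainder in the norming constants after multiplying or dividing by $\lambda_n - \Lambda = (n - (M+N)/2)^2 (1 + \ell_2(1/n))$, which shifts the exponent $2M$ to $2M'$ exactly as required.
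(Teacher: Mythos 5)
Your proposal is correct and follows essentially the same route as the paper: reduce the data step by step using the formulas dictated by Theorem~\ref{thm:transformation}, invoke the classical constant-boundary-condition result at the bottom, rebuild with $\widetilde{\mathbf{T}}$ via Theorem~\ref{thm:inverse_transformation} for existence, and use $\widehat{\mathbf{T}}$ together with Theorem~\ref{thm:inverse} for uniqueness. Phrasing it as an induction on $K$ rather than as an explicit $K$-step chain is only a cosmetic difference, and your reduction factor $\lambda_n - \Lambda$ with $\Lambda = \lambda_0 - 2$ in the case $J = 0$ is the one consistent with (\ref{eq:mu}) and Theorem~\ref{thm:transformation}.
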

\begin{proof}
Denote $K := \max \{ M, N \}$, and consider the numbers $M^{(k)}$, $N^{(k)}$ and the sequences $\{ \lambda_n^{(k)} \}_{n \ge 0}$, $\{ \gamma_n^{(k)} \}_{n \ge 0}$ for $k = 0$, $1$, $\ldots$, $K$ defined by
$$
  M^{(0)} := M, \qquad N^{(0)} := N, \qquad \lambda_n^{(0)} := \lambda_n, \qquad \gamma_n^{(0)} := \gamma_n
$$
and
$$\begin{aligned}
  M^{(k)} &:= M^{(k-1)} - I, \qquad & N^{(k)} &:= N^{(k-1)} + I - 2 J, \\
  \lambda_n^{(k)} &:= \lambda_{n+J}^{(k-1)}, \qquad & \gamma_n^{(k)} &:= \frac{\gamma_{n+J}^{(k-1)}}{(\lambda_{n+J}^{(k-1)} - \lambda_0^{(k-1)} + 2 - 2 J)^I},
\end{aligned}$$
where
$$
  I := \begin{cases} 1, & M^{(k-1)} \ge 0, \\ -1, & M^{(k-1)} = -1, \end{cases} \qquad J := \begin{cases} 1, & M^{(k-1)}, N^{(k-1)} \ge 0, \\ 0, & \text{otherwise} \end{cases}
$$
(we omit the indices of $I$ and $J$ to avoid double indices). It is easy to see that they satisfy the conditions (\ref{eq:increasing}) and (\ref{eq:asymptotics}) with $M$, $N$, $\lambda_n$ and $\gamma_n$ replaced by $M^{(k)}$, $N^{(k)}$, $\lambda_n^{(k)}$ and $\gamma_n^{(k)}$ respectively. Moreover, one of the numbers $M^{(K)}$ and $N^{(K)}$ is $0$, while the other one is either $0$ or $-1$. Hence there exists a boundary value problem $\mathscr{P}(q^{(K)}, f^{(K)}, F^{(K)})$ (with constant boundary conditions) having $\{ \lambda_n^{(K)}, \gamma_n^{(K)} \}_{n \ge 0}$ as its spectral data. Now we successively define $\mathscr{P}(q^{(K-1)}, f^{(K-1)}, F^{(K-1)})$, $\dots$, $\mathscr{P}(q^{(0)}, f^{(0)}, F^{(0)})$ by
$$
  (q^{(k-1)}, f^{(k-1)}, F^{(k-1)}) := \widetilde{\mathbf{T}}(\lambda_0^{(k-1)}, \gamma_0^{(k-1)}, q^{(k)}, f^{(k)}, F^{(k)}).
$$
Theorem~\ref{thm:inverse_transformation} ensures at each step that the spectral data of $\mathscr{P}(q^{(k)}, f^{(k)}, F^{(k)})$ is $\{ \lambda_n^{(k)}, \gamma_n^{(k)} \}_{n \ge 0}$, and hence the existence part of the theorem follows.

To prove the uniqueness part assume that $\mathscr{P}(q, f, F)$ and $\mathscr{P}(\widetilde{q}, \widetilde{f}, \widetilde{F})$ have the same spectral data. Theorem~\ref{thm:asymptotics} implies $\ind f = \ind \widetilde{f}$ and $\ind F = \ind \widetilde{F}$. Denote $K := \max \{ \ind f, \ind F \}$. Together with the problems $\mathscr{P}(q^{(k)}, f^{(k)}, F^{(k)})$ defined by (\ref{eq:P_k}) we consider the problems $\mathscr{P}(\widetilde{q}^{(k)}, \widetilde{f}^{(k)}, \widetilde{F}^{(k)})$ defined by
\begin{equation} \label{eq:P_tilde_k}
\begin{aligned}
  (\widetilde{q}^{(0)}, \widetilde{f}^{(0)}, \widetilde{F}^{(0)}) &:= (\widetilde{q}, \widetilde{f}, \widetilde{F}), \\
  (\widetilde{q}^{(k)}, \widetilde{f}^{(k)}, \widetilde{F}^{(k)}) &:= \widehat{\mathbf{T}} (\widetilde{q}^{(k-1)}, \widetilde{f}^{(k-1)}, \widetilde{F}^{(k-1)}), \qquad k = 1, 2, \ldots, K.
\end{aligned}
\end{equation}
It follows from Theorem~\ref{thm:transformation} that $\mathscr{P}(q^{(k)}, f^{(k)}, F^{(k)})$ and $\mathscr{P}(\widetilde{q}^{(k)}, \widetilde{f}^{(k)}, \widetilde{F}^{(k)})$ have the same spectral data. In particular, $\mathscr{P}(q^{(K)}, f^{(K)}, F^{(K)})$ and $\mathscr{P}(\widetilde{q}^{(K)}, \widetilde{f}^{(K)}, \widetilde{F}^{(K)})$ are two problems with constant boundary conditions and the same spectral data. Therefore $(q^{(K)}, f^{(K)}, F^{(K)}) = (\widetilde{q}^{(K)}, \widetilde{f}^{(K)}, \widetilde{F}^{(K)})$. Finally, successive applications of Theorem~\ref{thm:inverse} concludes the proof.
\end{proof}

\begin{remark} \label{rem:L_1}
A similar characterization can be obtained for potentials $q \in \mathscr{L}_1(0, \pi)$. In this case, however, in addition to the asymptotics of the spectral data, one more condition is needed, namely that a certain function of two variables (the kernel of the Gelfand--Levitan--Marchenko equation) has summable first derivatives (see \cite[Theorem 1.6.1]{LG1964}). For problems $\mathscr{P}(q, f, F)$ with $\ind f \le 0$ this result can be found in
\cite[Section 6]{BBW2002b}.
\end{remark}

\subsection{Symmetric case} \label{ss:symmetric}

Theorem~\ref{thm:by_spectral_data} shows that the spectrum of a boundary value problem of the form (\ref{eq:SL})-(\ref{eq:boundary}) does not uniquely determine this problem. But the situation is different in the case of \emph{symmetric} boundary value problems, i.e., for $\mathscr{P}(q, f, f)$ with $q(x) = q(\pi - x)$. In this subsection we will prove that the spectrum alone determines the symmetric potential $q$ and the boundary coefficient $f$.

We start by studying the properties of symmetric problems. Theorem~\ref{thm:asymptotics} shows that the eigenvalues of $\mathscr{P}(q, f, f)$ satisfy the asymptotics
\begin{equation} \label{eq:symmetric_asymptotics}
  \sqrt{\lambda_n} = n - M + \frac{\sigma}{\pi n} + \ell_2 \left( \frac{1}{n} \right),
\end{equation}
where $M = \ind f$ and
\begin{equation*}
\sigma = \frac{1}{2} \int_0^\pi q(x) \,\du x + 2 \omega_1.
\end{equation*}
Since our problem is symmetric, it follows from~(\ref{eq:phi_psi}) that $\psi(x, \lambda) = \varphi(\pi - x, \lambda)$. Then~(\ref{eq:beta}) implies $\psi(x, \lambda_n) = \beta_n^2 \psi(x, \lambda_n)$, and hence $\beta_n^2 = 1$. Using Theorem~\ref{thm:oscillation} we obtain $\beta_n = (-1)^n$. Thus Lemma~\ref{lem:chi_beta_gamma} implies
\begin{equation} \label{eq:symmetric_chi_gamma}
  \gamma_n = (-1)^n \chi'(\lambda_n).
\end{equation}
It follows from the asymptotics of $\chi(\lambda)$ (see the proof of Lemma~\ref{lem:asymptotics}) that this function is an entire function of order $1/2$, and hence from Hadamard's theorem (see, e.g., \cite[Section 4.2]{L1996}) we obtain
\begin{equation} \label{eq:product}
  \chi(\lambda) = -\pi \prod_{n=0}^M (\lambda_n - \lambda) \prod_{n=M+1}^\infty \frac{\lambda_n - \lambda}{(n - M)^2}.
\end{equation}

Now we are ready to state the main result of this subsection.

\begin{theorem} \label{thm:symmetric}
Let $\{ \lambda_n \}_{n \ge 0}$ be a strictly increasing sequence of real numbers satisfying the asymptotics (\ref{eq:symmetric_asymptotics}) for some real $\sigma$ and integer $M \ge -1$. Then there exists a unique symmetric boundary value problem $\mathscr{P}(q, f, f)$ having the spectrum $\{ \lambda_n \}_{n \ge 0}$.
\end{theorem}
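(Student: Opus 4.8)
The plan is to deduce both existence and uniqueness from Theorem~\ref{thm:by_spectral_data} by supplying the norming constants that a symmetric realization is forced to possess. Indeed, any symmetric problem with spectrum $\{\lambda_n\}$ has $\ind f = \ind F =: M$ (read off from~(\ref{eq:symmetric_asymptotics}) via $\sqrt{\lambda_n} = n - M + O(1/n)$), its characteristic function is the entire function $\chi$ of order $1/2$ given by the Hadamard product~(\ref{eq:product}), and its norming constants are determined by~(\ref{eq:symmetric_chi_gamma}). I would therefore set
\[
  \gamma_n := (-1)^n \chi'(\lambda_n),
\]
with $\chi$ defined by~(\ref{eq:product}) (the product converges and has order $1/2$ precisely because of~(\ref{eq:symmetric_asymptotics})), and show that $\{\lambda_n, \gamma_n\}$ satisfies the hypotheses~(\ref{eq:increasing})--(\ref{eq:asymptotics}) of Theorem~\ref{thm:by_spectral_data} with $N = M$.

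The positivity $\gamma_n > 0$ is immediate: writing $\chi = -\pi \prod_n (\lambda_n - \lambda)/c_n$ with positive constants $c_n$ (equal to $1$ for $n \le M$ and to $(n-M)^2$ for $n > M$), differentiation yields $\chi'(\lambda_m) = (\pi/c_m) \prod_{n \ne m} (\lambda_n - \lambda_m)/c_n$, and since exactly $m$ of the factors $\lambda_n - \lambda_m$ are negative, $\chi'(\lambda_m)$ has sign $(-1)^m$; hence $\gamma_m > 0$. The genuinely technical step---and the one I expect to be the main obstacle---is the verification of the second line of~(\ref{eq:asymptotics}), namely $\gamma_n = \frac{\pi}{2}(n - M)^{2M}(1 + \ell_2(1/n))$. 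I would obtain this by estimating the product $\prod_{n \ne m}(\lambda_n - \lambda_m)/c_n$ through comparison with the model sequence $(n-M)^2$, using~(\ref{eq:symmetric_asymptotics}) to control the factors term by term; this is the standard (if laborious) computation underlying norming-constant asymptotics in the classical constant-coefficient theory, and here it only has to be carried out once.

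Granting this, Theorem~\ref{thm:by_spectral_data} produces a \emph{unique} problem $\mathscr{P}(q, f, F)$ with spectral data $\{\lambda_n, \gamma_n\}$, and it remains to see that this problem is symmetric. Comparing zeros and leading asymptotics shows that its characteristic function coincides with $\chi$, so Lemma~\ref{lem:chi_beta_gamma} together with $\gamma_n = (-1)^n \chi'(\lambda_n)$ forces $\beta_n = (-1)^n$ in~(\ref{eq:beta}). Now consider the \emph{reflected} problem $\mathscr{P}(q(\pi - \cdot), F, f)$: substituting $x \mapsto \pi - x$ maps the original solutions $\varphi, \psi$ to the solutions of the reflected problem, from which one computes directly that its characteristic function is again $\chi$ (so it has the same eigenvalues $\{\lambda_n\}$) and that its multipliers satisfy $\widetilde{\beta}_n = 1/\beta_n = (-1)^n$. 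Consequently its norming constants equal $\chi'(\lambda_n)/\widetilde{\beta}_n = (-1)^n \chi'(\lambda_n) = \gamma_n$, so the reflected problem has exactly the same spectral data $\{\lambda_n, \gamma_n\}$. The uniqueness clause of Theorem~\ref{thm:by_spectral_data} then gives $(q(\pi - \cdot), F, f) = (q, f, F)$, i.e.\ $q(x) = q(\pi - x)$ and $f = F$; thus $\mathscr{P}(q, f, f)$ is the desired symmetric realization.

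Finally, uniqueness of the symmetric problem is essentially already contained in the construction: if $\mathscr{P}(q, f, f)$ is any symmetric problem with spectrum $\{\lambda_n\}$, then $M = \ind f$, its characteristic function, and hence (by~(\ref{eq:symmetric_chi_gamma})) its norming constants $(-1)^n \chi'(\lambda_n)$ are all determined by $\{\lambda_n\}$ alone. Thus every symmetric realization has the same spectral data $\{\lambda_n, \gamma_n\}$, and the uniqueness part of Theorem~\ref{thm:by_spectral_data} shows there is at most one such problem.
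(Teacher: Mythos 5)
Your proposal follows essentially the same route as the paper: define $\chi$ by the Hadamard product (\ref{eq:product}), set $\gamma_n := (-1)^n \chi'(\lambda_n)$, verify the hypotheses (\ref{eq:increasing})--(\ref{eq:asymptotics}) with $N = M$, and invoke Theorem~\ref{thm:by_spectral_data}. Two remarks on where you diverge. First, for the step you correctly flag as the main obstacle --- the asymptotics $\gamma_n = \frac{\pi}{2}(n-M)^{2M}\left(1 + \ell_2\left(\frac{1}{n}\right)\right)$ --- the paper avoids the term-by-term estimation of the infinite product by citing Marchenko's representation theorem (\cite[Lemma 3.4.2]{M1977}), which converts (\ref{eq:product}) into $\chi(\lambda) = -\left( \frac{\sin \pi\sqrt{\lambda}}{\sqrt{\lambda}} - \frac{4\sigma\cos\pi\sqrt{\lambda}}{4\lambda - 1} + \frac{g(\lambda)}{\lambda}\right)\prod_{n=0}^M(\lambda_n - \lambda)$ with $g$ a cosine transform of an $\mathscr{L}_2$ function; differentiating this at $\lambda_n$ gives both positivity and the asymptotics at once, and you would be well advised to use that tool rather than redo the laborious product comparison. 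Second, your reflection argument --- showing that $\mathscr{P}(q(\pi-\cdot), F, f)$ has the same spectral data and concluding symmetry from the uniqueness clause of Theorem~\ref{thm:by_spectral_data} --- is a genuine addition: the paper compresses this into ``the rest follows from Theorem~\ref{thm:by_spectral_data},'' and your explicit verification that $\widetilde{\beta}_n = 1/\beta_n = (-1)^n$ forces the reflected problem to share the norming constants is exactly the right way to make that step airtight. The sign count giving $\gamma_m > 0$ and the uniqueness argument (every symmetric realization is forced to have the same norming constants, by the discussion preceding the theorem) are both correct.
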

\begin{proof}
Define $\chi$ by~(\ref{eq:product}) and then $\gamma_n$ by~(\ref{eq:symmetric_chi_gamma}). It follows from \cite[Lemma 3.4.2]{M1977} that $\chi$ has a representation of the form
\begin{equation*}
  \chi(\lambda) = - \left( \frac{\sin \pi \sqrt{\lambda}}{\sqrt{\lambda}} - \frac{4 \sigma \cos \pi \sqrt{\lambda}}{4 \lambda - 1} + \frac{g(\lambda)}{\lambda} \right) \prod_{n=0}^M (\lambda_n - \lambda),
\end{equation*}
where $g(\lambda) = \int_0^\pi \widetilde{g}(t) \cos \sqrt{\lambda} t \,\du t$ for some $\widetilde{g} \in \mathscr{L}_2(0, \pi)$.
This representation and (\ref{eq:product}) imply that $\gamma_n$ are strictly positive numbers satisfying the asymptotics
\begin{equation*}
  \gamma_n = \frac{\pi}{2} (n - M)^{2M} \left( 1 + \ell_2 \left( \frac{1}{n} \right) \right).
\end{equation*}
The rest of the proof now follows from Theorem~\ref{thm:by_spectral_data}.
\end{proof}

\subsection{Inverse problem with partial information on the potential} \label{ss:partialinformation}

Another type of inverse problems where the spectrum alone is sufficient are the so-called problems with partial information on the potential. In the case of constant (i.e., independent of the eigenvalue parameter) boundary conditions Hochstadt and Lieberman \cite{HL1978} showed that the knowledge of the norming constants can be replaced by the knowledge of the potential on half the interval and of the boundary constants. Hald \cite{H1980} proved that one of the boundary constants need not be assumed known, i.e., for a problem $\mathscr{P}(q, h, H)$ with constant boundary conditions ($h$, $H \in \mathbb{R} \cup \{\infty\}$) the knowledge of $q$ on $[0, \pi / 2]$ together with $h$ and the spectrum uniquely determines $H$ and $q$ a.e. on all of $[0, \pi]$ (see \cite[Lemma 1]{H1980}, \cite[Theorem A.1]{GS2000}). Later Gesztesy and Simon \cite{GS2000} and Ramm \cite{R2000} showed that if the potential is known on more than half the interval then only a finite density subset of eigenvalues is needed. See also \cite{AFR2009}, \cite{H2005}, \cite{HS2016}, \cite{MP2016}, \cite{WX2012} for further developments in this direction, and \cite{BP1980}, \cite{W2013}, \cite{YH2010} for boundary conditions dependent on the eigenvalue parameter. Here we generalize the Hochstadt--Lieberman theorem to the case of boundary value problems of the form (\ref{eq:SL})-(\ref{eq:boundary}).

\begin{theorem} \label{thm:partial}
Let $\{ \lambda_n \}_{n \ge 0}$ and $\{ \widetilde{\lambda}_n \}_{n \ge 0}$ denote the eigenvalues of the problems $\mathscr{P}(q, f, F)$ and $\mathscr{P}(\widetilde{q}, f, \widetilde{F})$ respectively, where $q$, $\widetilde{q} \in \mathscr{L}_1(0, \pi)$, $f$, $F$, $\widetilde{F} \in \mathscr{R}$ and $\ind f \ge \ind F$. If $q(x) = \widetilde{q}(x)$ a.e. on $[0, \pi / 2]$ and $\lambda_n = \widetilde{\lambda}_n$ for $n \ge 0$, then $(q, f, F) = (\widetilde{q}, f, \widetilde{F})$.
\end{theorem}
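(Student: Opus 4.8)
The plan is to run the classical Hochstadt--Lieberman argument, using the hypothesis $\ind f \ge \ind F$ to control the growth estimate at the heart of the proof. Write $\varphi(x,\lambda)$, $\psi(x,\lambda)$ for the solutions of $\mathscr{P}(q,f,F)$ and $\widetilde{\varphi}$, $\widetilde{\psi}$ for those of $\mathscr{P}(\widetilde{q},f,\widetilde{F})$, all normalized as in~(\ref{eq:phi_psi}). First I would record two consequences of the hypotheses. From the first-order asymptotics, Lemma~\ref{lem:asymptotics} gives $\sqrt{\lambda_n}=n-(\ind f+\ind F)/2+O(1/n)$ and likewise for $\widetilde{\lambda}_n$ with $\ind\widetilde{F}$ in place of $\ind F$ (the left index being the common $\ind f$); since $\lambda_n=\widetilde{\lambda}_n$ this forces $\ind F=\ind\widetilde{F}$. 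Next, since $q=\widetilde{q}$ a.e.\ on $[0,\pi/2]$ and both problems carry the same left boundary data $f$, the functions $\varphi$ and $\widetilde{\varphi}$ solve the same equation with the same initial conditions~(\ref{eq:phi_psi}) on $[0,\pi/2]$, so $\varphi(x,\lambda)=\widetilde{\varphi}(x,\lambda)$ for all $x\in[0,\pi/2]$ and all $\lambda$; in particular their Cauchy data at $\pi/2$ coincide.

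I would then show that the characteristic functions coincide. Both $\chi$ and $\widetilde{\chi}$ are entire of order $1/2$ (from the asymptotics of $\chi$ established in the proof of Lemma~\ref{lem:asymptotics}), have exactly the common simple zeros $\{\lambda_n\}$, and---because $\ind f+\ind F=\ind f+\ind\widetilde{F}$---share the same leading asymptotic term. Hadamard's factorization for functions of order $1/2$ (genus $0$, no exponential factor) then yields $\chi=c\,\widetilde{\chi}$, and comparison of leading terms gives $c=1$, i.e.\ $\chi\equiv\widetilde{\chi}$. Next I would use the Wronskian representation $\chi=\varphi(\cdot,\lambda)\psi'(\cdot,\lambda)-\varphi'(\cdot,\lambda)\psi(\cdot,\lambda)$, which is independent of $x$, evaluated at $\pi/2$; applying the same to $\widetilde{\chi}$ and replacing $\widetilde{\varphi}(\pi/2,\cdot),\widetilde{\varphi}'(\pi/2,\cdot)$ by $\varphi(\pi/2,\cdot),\varphi'(\pi/2,\cdot)$, the identity $\chi\equiv\widetilde{\chi}$ becomes
\[
  \varphi(\pi/2,\lambda)\,g_1(\lambda)=\varphi'(\pi/2,\lambda)\,g(\lambda),
\]
where $g:=\psi(\pi/2,\cdot)-\widetilde{\psi}(\pi/2,\cdot)$ and $g_1:=\psi'(\pi/2,\cdot)-\widetilde{\psi}'(\pi/2,\cdot)$ are entire. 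Since $\varphi(\pi/2,\cdot)$ and $\varphi'(\pi/2,\cdot)$ have no common zero (a common zero would force $\varphi(\cdot,\lambda)\equiv0$, impossible as $f_\uparrow,f_\downarrow$ have no common root) and the zeros of $\varphi(\pi/2,\cdot)$ (the Dirichlet eigenvalues on $[0,\pi/2]$) are simple, $g$ vanishes at each of them, so $r:=g/\varphi(\pi/2,\cdot)$ is entire with $g_1=\varphi'(\pi/2,\cdot)\,r$.

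The crux is a Liouville argument forcing $r\equiv0$, and this is precisely where $\ind f\ge\ind F$ is used. Along circles $|\sqrt{\lambda}|=n+\tfrac12$ one has $|\varphi(\pi/2,\lambda)|\gtrsim(\sqrt{\lambda})^{\ind f}e^{|\im\sqrt{\lambda}|\pi/2}$, while $\psi(\pi/2,\lambda)$ and $\widetilde{\psi}(\pi/2,\lambda)$ share the same leading term $(\sqrt{\lambda})^{\ind F}\cdot(\cos\text{ or }\sin)(\sqrt{\lambda}\,\pi/2)$, which depends only on $\ind F=\ind\widetilde{F}$ and not on the potential or on the finer coefficients of $F$; hence the leading terms cancel and $g=O\big((\sqrt{\lambda})^{\ind F-1}e^{|\im\sqrt{\lambda}|\pi/2}\big)$. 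Therefore $r=O\big((\sqrt{\lambda})^{\ind F-1-\ind f}\big)\to0$ on these expanding contours, and by the maximum-modulus principle $r\equiv0$. Consequently $g\equiv0$ and $g_1\equiv0$, i.e.\ $\psi(\pi/2,\lambda)=\widetilde{\psi}(\pi/2,\lambda)$ and $\psi'(\pi/2,\lambda)=\widetilde{\psi}'(\pi/2,\lambda)$ for all $\lambda$.

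These equalities say that the Weyl--Titchmarsh function $m(\lambda)=\psi'(\pi/2,\lambda)/\psi(\pi/2,\lambda)$ of the problem on $[\pi/2,\pi]$ with boundary condition $F$ at $\pi$ coincides with $\widetilde{m}$. By the Borg--Marchenko uniqueness theorem---equivalently, since $\psi(\pi/2,\cdot)$ and $\psi'(\pi/2,\cdot)$ determine the Dirichlet-at-$\pi/2$ spectral data, by the uniqueness part of Theorem~\ref{thm:by_spectral_data} applied (after rescaling) on $[\pi/2,\pi]$---the pair $\big(q|_{[\pi/2,\pi]},F\big)$ is uniquely determined, so $q=\widetilde{q}$ a.e.\ on $[\pi/2,\pi]$ and $F=\widetilde{F}$. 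Combined with the hypotheses this gives $(q,f,F)=(\widetilde{q},f,\widetilde{F})$. I expect the main obstacle to be the sharp asymptotic analysis underlying the Liouville step: establishing that the leading term of $\psi(\pi/2,\lambda)$ depends only on $\ind F$, so that the difference $g$ drops an order, which is exactly what makes $\ind f\ge\ind F$ the sharp hypothesis.
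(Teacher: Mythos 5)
Your argument is correct in outline and is a genuinely different route from the paper's. The paper does not adapt the Hochstadt--Lieberman machinery directly; instead it iterates the transformation $\widehat{\mathbf{T}}$ on both problems, using $\ind f \ge \ind F$ to guarantee that after $K = \ind f$ steps both boundary coefficients have become constant (or Dirichlet), observes that the transforming solutions $v^{(k)}$ and $\widetilde{v}^{(k)}$ agree on $[0,\pi/2]$ (so the half-interval equality of potentials and the left boundary coefficient propagate down the chain, and the transformed spectra still coincide), applies the classical Hochstadt--Lieberman theorem in Hald's form at the bottom, and then climbs back up: $1/v^{(K-1)}$ and $1/\widetilde{v}^{(K-1)}$ solve the same initial value problem on all of $[0,\pi]$, which recovers $q^{(K-1)} = \widetilde{q}^{(K-1)}$ and $F^{(K-1)} = \widetilde{F}^{(K-1)}$, and so on. Your direct route --- Hadamard factorization to get $\chi \equiv \widetilde{\chi}$, the Wronskian at $\pi/2$, the entire quotient $r = g/\varphi(\pi/2,\cdot)$, and the Liouville step --- is sound (the no-common-zero and simplicity claims hold in the paper's framework, and your order count $\ind F - 1 - \ind f \le -1$ is exactly where the hypothesis enters), and it has the virtue of making the role of $\ind f \ge \ind F$ completely transparent while avoiding the transformation machinery; the cost is that you must redo the entire-function estimates that the paper's route inherits from the classical theorem. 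Two points in your endgame deserve attention: (i) concluding $(q|_{[\pi/2,\pi]}, F) = (\widetilde{q}|_{[\pi/2,\pi]}, \widetilde{F})$ from equality of the Weyl data requires a uniqueness statement valid for an eigenparameter-dependent condition at $\pi$, so you cannot cite classical Borg--Marchenko off the shelf; your reduction to the uniqueness half of Theorem~\ref{thm:by_spectral_data} via the Dirichlet-at-$\pi/2$ spectral data is the right fix, but (ii) that theorem is stated for $q \in \mathscr{L}_2$ whereas the present theorem allows $q \in \mathscr{L}_1$, so you should note explicitly that only the uniqueness half is used and that it holds for $\mathscr{L}_1$ potentials as well (it reduces, via the transformations, to the classical $\mathscr{L}_1$ uniqueness result). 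Neither point is a fatal gap.
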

\begin{proof}
From the asymptotics of the eigenvalues (see Theorem~\ref{thm:asymptotics}) we obtain that $\ind F = \ind \widetilde{F}$. Denote $K := \ind f$, and consider the problems $\mathscr{P}(q^{(k)}, f^{(k)}, F^{(k)})$ and $\mathscr{P}(\widetilde{q}^{(k)}, \widetilde{f}^{(k)}, \widetilde{F}^{(k)})$ defined by the formulas (\ref{eq:P_k}) and (\ref{eq:P_tilde_k}) respectively. 
Then $f^{(k)} = \widetilde{f}^{(k)}$ and $\ind f^{(k)} \ge \ind F^{(k)} = \ind \widetilde{F}^{(k)}$ for each $k = 1$, $2$, $\ldots$, $K$; in particular, $f^{(K)} \in \mathscr{R}_0$ and $F^{(K)} \in \mathscr{R}_{-1} \cup \mathscr{R}_0$. Denote by $v^{(k)}(x)$ and $\widetilde{v}^{(k)}(x)$ the solutions of the equations $-y''(x) + q^{(k)}(x)y(x) = \Lambda^{(k)} y(x)$ and $-y''(x) + \widetilde{q}^{(k)}(x)y(x) = \Lambda^{(k)} y(x)$ respectively satisfying the initial conditions
$$
  v^{(k)}(0) = \widetilde{v}^{(k)}(0) = f^{(k)}_\downarrow(\Lambda^{(k)}), \qquad \left( v^{(k)} \right)'(0) = \left( \widetilde{v}^{(k)} \right)'(0) = -f^{(k)}_\uparrow(\Lambda^{(k)}),
$$
where
$$
  \Lambda^{(k)} := \begin{cases} \mathring{\boldsymbol{\uplambda}}(q^{(k)}, f^{(k)}, F^{(k)}), & F^{(k)} \ne \infty, \\ \mathring{\boldsymbol{\uplambda}}(q^{(k)}, f^{(k)}, F^{(k)}) - 2, & F^{(k)} = \infty. \end{cases}
$$
Using the definition of the transformation $\widehat{\mathbf{T}}$, for each $k = 0$, $1$, $\ldots$, $K-1$ we successively obtain $v^{(k)}(x) = \widetilde{v}^{(k)}(x)$ on $[0, \pi / 2]$ and $q^{(k+1)}(x) = \widetilde{q}^{(k+1)}(x)$ a.e. on $[0, \pi / 2]$. Then the problems $\mathscr{P}(q^{(K)}, f^{(K)}, F^{(K)})$ and $\mathscr{P}(\widetilde{q}^{(K)}, f^{(K)}, \widetilde{F}^{(K)})$ with constant boundary conditions satisfy the conditions of the Hochstadt--Lieberman theorem, and thus $(q^{(K)}, f^{(K)}, F^{(K)}) = (\widetilde{q}^{(K)}, f^{(K)}, \widetilde{F}^{(K)})$.
We now observe that both $1 / v^{(K-1)}$ and $1 / \widetilde{v}^{(K-1)}$ satisfy the equation $-y''(x) + q^{(K)}(x)y(x) = \Lambda^{(K-1)} y(x)$ and the same initial conditions at $0$. Hence $v^{(K-1)}(x) = \widetilde{v}^{(K-1)}(x)$ on all of $[0, \pi]$. Thus
$$
  q^{(K-1)} = q^{(K)} + 2 \left( \frac{\left( v^{(K-1)} \right)'}{v^{(K-1)}} \right)' = q^{(K)} + 2 \left( \frac{\left( \widetilde{v}^{(K-1)} \right)'}{\widetilde{v}^{(K-1)}} \right)' = \widetilde{q}^{(K-1)}
$$
a.e. on $[0, \pi]$, and
\begin{equation*}
\begin{split}
  F^{(K-1)} &= \boldsymbol{\Theta} \left( \Lambda^{(K-1)}, -\frac{\left( v^{(K-1)} \right)'(\pi)}{v^{(K-1)}(\pi)}, F^{(K)} \right) \\
  &= \boldsymbol{\Theta} \left( \Lambda^{(K-1)}, -\frac{\left( \widetilde{v}^{(K-1)} \right)'(\pi)}{\widetilde{v}^{(K-1)}(\pi)}, F^{(K)} \right) = \widetilde{F}^{(K-1)}.
\end{split}
\end{equation*}
Repeating this argument $K - 1$ more times concludes the proof.
\end{proof}

\begin{remark} \label{rem:partial}
The condition $\ind f \ge \ind F$ also appears in the above-cited works~\cite{BP1980}, \cite{W2013}, \cite{YH2010}. The question of whether this condition can be removed seems to be open.

One can also try to apply the transformations $\widehat{\mathbf{T}}$ and $\widetilde{\mathbf{T}}$ to other results mentioned above. But in order for this to work, a very restrictive additional condition (the equality of the first several eigenvalues) must be imposed, and hence we do not present these results here.
\end{remark}

\subsection{Inverse problem by interior spectral data} \label{ss:interior}

In this final subsection, we consider a new type of uniqueness problem that has appeared relatively recently. Using the main ideas of~\cite{HL1978}, Mochizuki and Trooshin~\cite{MT2001} proved that for a problem $\mathscr{P}(q, h, H)$ with constant boundary conditions ($h$, $H \in \mathbb{R} \cup \{\infty\}$) the knowledge of $h$ and $H$, the spectrum and the values of the function $\varphi'(\pi / 2, \lambda) / \varphi(\pi / 2, \lambda)$ at the eigenvalues uniquely determines $q$ a.e. on $[0, \pi]$. Our transformations allow us to extend this result to the case of boundary conditions of the form~(\ref{eq:boundary}) with equal indices.

\begin{theorem} \label{thm:interior}
Let $\{ \lambda_n \}_{n \ge 0}$ and $\{ \widetilde{\lambda}_n \}_{n \ge 0}$ denote the eigenvalues of the problems $\mathscr{P}(q, f, F)$ and $\mathscr{P}(\widetilde{q}, f, F)$ respectively, and let $y_n$ and $\widetilde{y}_n$ be corresponding eigenfunctions, where $q$, $\widetilde{q} \in \mathscr{L}_1(0, \pi)$, $f$, $F \in \mathscr{R}$ and $\ind f = \ind F$. If $\lambda_n = \widetilde{\lambda}_n$ and $y'_n(\pi / 2) / y_n(\pi / 2) = \widetilde{y}'_n(\pi / 2) / \widetilde{y}_n(\pi / 2)$ for all $n \ge 0$ then $q(x) = \widetilde{q}(x)$ a.e. on $[0, \pi]$.
\end{theorem}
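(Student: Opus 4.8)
The plan is to reduce the problem, by repeatedly applying the direct transformation $\widehat{\mathbf{T}}$, to the case of constant boundary conditions, where the result is the classical Mochizuki--Trooshin theorem~\cite{MT2001}, and then to climb back up as in the proof of Theorem~\ref{thm:partial}. The new ingredient that makes the reduction work is the observation that the interior quantity $y'(\pi/2)/y(\pi/2)$ transforms under $\widehat{\mathbf{T}}$ by precisely the same Möbius-type rule $\boldsymbol{\Theta}$ that governs the boundary coefficients, with a parameter read off from the lowest eigenfunction at $\pi/2$.

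First I would record how the logarithmic derivative at an interior point changes. Writing $w(x,\lambda):=\varphi'(x,\lambda)/\varphi(x,\lambda)$ and $p(x):=v'(x)/v(x)$, the Riccati equation $w'=q-\lambda-w^2$ together with~(\ref{eq:v}) gives, after a short computation, that the eigenfunction $\widehat{\varphi}_n$ from~(\ref{eq:phi_hat}) satisfies
\begin{equation*}
  \frac{\widehat{\varphi}_n'(x)}{\widehat{\varphi}_n(x)} = \frac{\Lambda-\lambda_n}{w(x,\lambda_n)-p(x)} - p(x) = \boldsymbol{\Theta}\bigl(\Lambda, p(x), w(x,\cdot)\bigr)(\lambda_n)
\end{equation*}
at every point $x$ (with the usual convention on the extended real line when the denominator vanishes). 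In particular, at $x=\pi/2$ the new interior data is obtained from the old one by applying $\boldsymbol{\Theta}$ with the single parameter $\tau=v'(\pi/2)/v(\pi/2)$.

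Next I would run the forward reduction. Since $\ind f=\ind F$, the two indices stay equal at every level, so I set $K:=\ind f=\ind F$ (the cases $K\le 0$ being the classical one already) and form the chains~(\ref{eq:P_k}) and~(\ref{eq:P_tilde_k}) for $q$ and $\widetilde q$. The crux is that at each step the three transformation parameters coincide for the two problems: at $0$ because $f^{(k)}=\widetilde f^{(k)}$ forces $-\bigl(v^{(k)}\bigr)'(0)/v^{(k)}(0)=f^{(k)}(\lambda_0^{(k)})$ to be common; at $\pi$ because $v^{(k)}\propto\varphi^{(k)}(\cdot,\lambda_0^{(k)})$ is an eigenfunction, so $\bigl(v^{(k)}\bigr)'(\pi)/v^{(k)}(\pi)=F^{(k)}(\lambda_0^{(k)})$ is common; and at $\pi/2$ because the $n=0$ instance of the interior-data hypothesis says exactly that $\bigl(v^{(k)}\bigr)'(\pi/2)/v^{(k)}(\pi/2)$ agrees for the two problems. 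Hence $f^{(k+1)}=\widetilde f^{(k+1)}$ and $F^{(k+1)}=\widetilde F^{(k+1)}$, the retained eigenvalues still coincide, and by the displayed rule the interior data at $\pi/2$ continue to agree at all eigenvalues; thus the hypotheses are inherited at each level while the index drops by one. After $K$ steps I reach two problems $\mathscr{P}(q^{(K)},f^{(K)},F^{(K)})$ and $\mathscr{P}(\widetilde q^{(K)},f^{(K)},F^{(K)})$ with identical \emph{constant} boundary conditions, the same spectrum, and the same interior logarithmic derivatives, so the Mochizuki--Trooshin theorem~\cite{MT2001} yields $q^{(K)}=\widetilde q^{(K)}$ a.e.

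Finally I would climb back up exactly as in the last paragraph of the proof of Theorem~\ref{thm:partial}: since $q^{(K)}=\widetilde q^{(K)}$ and $\lambda_0^{(K-1)}=\widetilde\lambda_0^{(K-1)}$, the functions $1/v^{(K-1)}$ and $1/\widetilde v^{(K-1)}$ solve the same equation at $\lambda=\Lambda^{(K-1)}$ with the same initial data at $0$ (determined by $f^{(K-1)}$ and $\lambda_0^{(K-1)}$), whence $v^{(K-1)}=\widetilde v^{(K-1)}$ on $[0,\pi]$ and therefore $q^{(K-1)}=\widetilde q^{(K-1)}$ a.e.; iterating gives $q=\widetilde q$. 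The nonvanishing of each $v^{(k)}$ used throughout is guaranteed by Remark~\ref{rem:pi_f} and Lemma~\ref{lem:no_zero}. I expect the main obstacle to be not any single estimate but the bookkeeping that keeps the three transformation parameters synchronized between the two problems at every level --- in particular, recognizing that it is precisely the $n=0$ interior condition that pins down the interior parameter $\tau$ --- while the base case rests entirely on the classical Mochizuki--Trooshin result.
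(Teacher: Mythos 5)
Your proposal is correct and follows essentially the same route as the paper: the displayed M\"obius-type rule for the interior logarithmic derivative is exactly the recursion the paper derives from the identities in the proof of Lemma~\ref{lem:oscillation}, the forward reduction to constant boundary conditions (with the three transformation parameters synchronized via $f^{(k)}=\widetilde f^{(k)}$, $F^{(k)}=\widetilde F^{(k)}$, and the $n=0$ interior condition) matches the paper's argument, and the climb back up is the same as in Theorem~\ref{thm:partial}. No substantive differences to report.
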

\begin{proof}
Let $K := \ind f = \ind F$, and consider again the problems $\mathscr{P}(q^{(k)}, f^{(k)}, F^{(k)})$ and $\mathscr{P}(\widetilde{q}^{(k)}, \widetilde{f}^{(k)}, \widetilde{F}^{(k)})$ defined by (\ref{eq:P_k}) and (\ref{eq:P_tilde_k}) respectively. Since the indices of $f$ and $F$ are equal, the numbers $J$ defined by (\ref{eq:J}) always equal $1$, and thus both these problems have eigenvalues $\{ \lambda_{n+k} \}_{n \ge 0}$. We also have $f^{(k)} = \widetilde{f}^{(k)}$ and $F^{(k)} = \widetilde{F}^{(k)}$. It follows from the identities in the proof of Lemma~\ref{lem:oscillation} that
$$
  \frac{\left( y^{(k+1)}_n \right)' \left( \dfrac{\pi}{2} \right)}{y^{(k+1)}_n \left( \dfrac{\pi}{2} \right)} = (\lambda_k - \lambda_{n+k+1}) \left( \frac{\left( y^{(k)}_{n+1} \right)' \left( \dfrac{\pi}{2} \right)}{y^{(k)}_{n+1} \left( \dfrac{\pi}{2} \right)} - \frac{\left( y^{(k)}_0 \right)' \left( \dfrac{\pi}{2} \right)}{y^{(k)}_0 \left( \dfrac{\pi}{2} \right)} \right)^{-1} - \frac{\left( y^{(k)}_0 \right)' \left( \dfrac{\pi}{2} \right)}{y^{(k)}_0 \left( \dfrac{\pi}{2} \right)}
$$
for $k = 0$, $\ldots$, $K-1$ and $n = 0$, $1$, $2$, $\ldots$. The same is true for eigenfunctions $\widetilde{y}^{(k)}_n$ of the problems $\mathscr{P}(\widetilde{q}^{(k)}, \widetilde{f}^{(k)}, \widetilde{F}^{(k)})$. Hence
$$
  \frac{\left( y^{(k)}_n \right)' \left( \dfrac{\pi}{2} \right)}{y^{(k)}_n \left( \dfrac{\pi}{2} \right)} = \frac{\left( \widetilde{y}^{(k)}_n \right)' \left( \dfrac{\pi}{2} \right)}{\widetilde{y}^{(k)}_n \left( \dfrac{\pi}{2} \right)}, \qquad n = 0, 1, 2, \ldots
$$
for each $k$. In particular, $\mathscr{P}(q^{(K)}, f^{(K)}, F^{(K)})$ and $\mathscr{P}(\widetilde{q}^{(K)}, f^{(K)}, F^{(K)})$ are two problems with constant boundary conditions, and thus $q^{(K)}(x) = \widetilde{q}^{(K)}(x)$ a.e. on $[0, \pi]$ by the above-mentioned theorem of Mochizuki and Trooshin. Then the functions $1 / y^{(K-1)}_0$ and $1 / \widetilde{y}^{(K-1)}_0$ satisfy the equation $-y''(x) + q^{(K)}(x)y(x) = \lambda_{n+K-1} y(x)$ and the same initial conditions at $0$. Therefore $y^{(K-1)}_0 = \widetilde{y}^{(K-1)}_0$ on all of $[0, \pi]$,
$$
  q^{(K-1)} = q^{(K)} + 2 \left( \frac{\left( y^{(K-1)}_0 \right)'}{y^{(K-1)}_0} \right)' = q^{(K)} + 2 \left( \frac{\left( \widetilde{y}^{(K-1)}_0 \right)'}{\widetilde{y}^{(K-1)}_0} \right)' = \widetilde{q}^{(K-1)},
$$
and repeating this argument $K - 1$ more times concludes the proof.
\end{proof}

\begin{remark} \label{rem:interior}
As in the previous theorem, the question remains open whether the condition $\ind f = \ind F$, which also appears in~\cite{W2013}, can be removed.
\end{remark}


\begin{thebibliography}{99}

\bibitem{ABHM2007} S. Albeverio, P. Binding, R. Hryniv and Ya. Mykytyuk,
\emph{Inverse spectral problems for coupled oscillating systems},
Inverse Problems \textbf{23} (2007), no. 3, 1181--1200.

\bibitem{AOK2009} R. Kh. Amirov, A. S. Ozkan and B. Keskin,
\emph{Inverse problems for impulsive Sturm--Liouville operator with spectral parameter linearly contained in boundary conditions},
Integral Transforms Spec. Funct. \textbf{20} (2009), no. 7-8, 607--618.

\bibitem{AFR2009} L. Amour, J. Faupin and T. Raoux,
\emph{Inverse spectral results for Schr\"{o}dinger operators on the unit interval with partial information given on the potentials},
J. Math. Phys. \textbf{50} (2009), no. 3, 033505, 14 pp.

\bibitem{BP1980} A. I. Benedek and R. Panzone,
\emph{On inverse eigenvalue problems for a second-order differential equation with parameter contained in the boundary conditions},
Universidad Nacional del Sur, Instituto de Matem\'{a}tica, Bah\'{i}a Blanca, 1980.

\bibitem{BP1981} A. I. Benedek and R. Panzone,
\emph{On Sturm--Liouville problems with the square-root of the eigenvalue parameter contained in the boundary conditions},
Universidad Nacional del Sur, Instituto de Matem\'{a}tica, Bah\'{i}a Blanca, 1981.

\bibitem{BBW2002a} P. A. Binding, P. J. Browne and B. A. Watson,
\emph{Sturm--Liouville problems with boundary conditions rationally dependent on the eigenparameter. I},
Proc. Edinb. Math. Soc. (2) \textbf{45} (2002), no. 3, 631--645.

\bibitem{BBW2002b} P. A. Binding, P. J. Browne and B. A. Watson,
\emph{Sturm--Liouville problems with boundary conditions rationally dependent on the eigenparameter. II},
J. Comput. Appl. Math. \textbf{148} (2002), no. 1, 147--168.

\bibitem{BBW2010} P. A. Binding, P. J. Browne and B. A. Watson,
\emph{Darboux transformations and the factorization of generalized Sturm--Liouville problems},
Proc. Roy. Soc. Edinburgh Sect. A \textbf{140} (2010), no. 1, 1--29.

\bibitem{CF2009} A. Chernozhukova and G. Freiling,
\emph{A uniqueness theorem for the boundary value problems with non-linear dependence on the spectral parameter in the boundary conditions},
Inverse Probl. Sci. Eng. \textbf{17} (2009), no. 6, 777--785.

\bibitem{C2001} M. V. Chugunova,
\emph{Inverse spectral problem for the Sturm--Liouville operator with eigenvalue parameter dependent boundary conditions},
Operator theory, system theory and related topics, Birkh\"{a}user, Basel, 2001, pp. 187--194.

\bibitem{C1942} R. V. Churchill,
\emph{Expansions in series of non-orthogonal functions},
Bull. Amer. Math. Soc. \textbf{48} (1942), 143--149.

\bibitem{CKS2001} F. Cooper, A. Khare and U. Sukhatme,
\emph{Supersymmetry in quantum mechanics},
World Scientific Publishing Co., Inc., River Edge, NJ, 2001.

\bibitem{DT1984} B. E. J. Dahlberg and E. Trubowitz,
\emph{The inverse Sturm--Liouville problem. III},
Comm. Pure Appl. Math. \textbf{37} (1984), no. 2, 255--267.

\bibitem{D1978} P. A. Deift,
\emph{Applications of a commutation formula},
Duke Math. J. \textbf{45} (1978), no. 2, 267--310.

\bibitem{EE2012} A. E. Etkin and G. P. Etkina,
\emph{Calculation of the regularized trace for the Sturm--Liouville problem with spectral parameter in the boundary conditions} (Russian),
Izv. Irkutsk. Gos. Univ. Ser. Mat. \textbf{5} (2012), no. 2, 81--89.

\bibitem{FY2001} G. Freiling and V. Yurko,
\emph{Inverse Sturm--Liouville problems and their applications},
Nova Science Publishers, Inc., Huntington, NY, 2001.

\bibitem{FY2010} G. Freiling and V. Yurko,
\emph{Inverse problems for Sturm--Liouville equations with boundary conditions polynomially dependent on the spectral parameter},
Inverse Problems \textbf{26} (2010), no. 5, 055003, 17 pp.

\bibitem{F1977} C. T. Fulton,
\emph{Two-point boundary value problems with eigenvalue parameter contained in the boundary conditions},
Proc. Roy. Soc. Edinburgh Sect. A \textbf{77} (1977), no. 3-4, 293--308.

\bibitem{GS2000} F. Gesztesy and B. Simon,
\emph{Inverse spectral analysis with partial information on the potential. II. The case of discrete spectrum},
Trans. Amer. Math. Soc. \textbf{352} (2000), no. 6, 2765--2787.

\bibitem{G2005a} N. J. Guliyev,
\emph{Inverse eigenvalue problems for Sturm--Liouville equations with spectral parameter linearly contained in one of the boundary conditions},
Inverse Problems \textbf{21} (2005), no. 4, 1315--1330.
\href{https://arxiv.org/abs/0803.0566}{arXiv:0803.0566}

\bibitem{G2005b} N. J. Guliyev,
\emph{The regularized trace formula for the Sturm--Liouville equation with spectral parameter in the boundary conditions},
Proc. Inst. Math. Mech. Natl. Acad. Sci. Azerb. \textbf{22} (2005), 99--102.
\href{https://arxiv.org/abs/0911.5412}{arXiv:0911.5412}

\bibitem{H1980} O. H. Hald,
\emph{Inverse eigenvalue problems for the mantle},
Geophys. J. R. Astron. Soc. \textbf{62} (1980), no. 1, 41--48.

\bibitem{HL1978} H. Hochstadt and B. Lieberman,
\emph{An inverse Sturm--Liouville problem with mixed given data},
SIAM J. Appl. Math. \textbf{34} (1978), no. 4, 676--680.

\bibitem{H2005} M. Horv\'{a}th,
\emph{Inverse spectral problems and closed exponential systems},
Ann. of Math. (2) \textbf{162} (2005), no. 2, 885--918.

\bibitem{HS2016} M. Horv\'{a}th and O. S\'{a}f\'{a}r,
\emph{Inverse eigenvalue problems},
J. Math. Phys. \textbf{57} (2016), no. 11, 112102, 16 pp.

\bibitem{IN2016} Ch. G. Ibadzadeh and I. M. Nabiev,
\emph{An inverse problem for Sturm--Liouville operators with non-separated boundary conditions containing the spectral parameter},
J. Inverse Ill-Posed Probl. \textbf{24} (2016), no. 4, 407--411.

\bibitem{IMT1984} E. L. Isaacson, H. P. McKean and E. Trubowitz,
\emph{The inverse Sturm--Liouville problem. II},
Comm. Pure Appl. Math. \textbf{37} (1984), no. 1, 1--11.

\bibitem{IT1983} E. L. Isaacson and E. Trubowitz,
\emph{The inverse Sturm--Liouville problem. I},
Comm. Pure Appl. Math. \textbf{36} (1983), no. 6, 767--783.

\bibitem{KC2009} E. L. Korotyaev and D. S. Chelkak,
\emph{The inverse Sturm--Liouville problem with mixed boundary conditions} (Russian),
Algebra i Analiz \textbf{21} (2009), no. 5, 114--137; English transl. in St. Petersburg Math. J. \textbf{21} (2010), no. 5, 761--778.
\href{https://arxiv.org/abs/math/0607811}{arXiv:math/0607811}

\bibitem{K2004} N. Dzh. Kuliev,
\emph{Inverse problems for the Sturm--Liouville equation with a spectral parameter in the boundary condition} (Russian),
Dokl. Nats. Akad. Nauk Azerb. \textbf{60} (2004), no. 3-4, 10--16.

\bibitem{K2006} N. Dzh. Kuliev,
\emph{Calculation of the regularized trace for the Sturm--Liouville equation with the spectral parameter in a boundary condition} (Russian),
Methods of mathematical physics. Proceedings of the conference in honor of Academician M. L. Rasulov's 90th birthday, Baku University Press, Baku, 2006, pp. 105--108.

\bibitem{L1996} B. Ya. Levin,
\emph{Lectures on entire functions},
American Mathematical Society, Providence, RI, 1996.

\bibitem{LG1964} B. M. Levitan and M. G. Gasymov,
\emph{Determination of a differential equation by two of its spectra} (Russian),
Uspehi Mat. Nauk \textbf{19} (1964), no. 2(116), 3--63; English transl. in Russian Math. Surveys \textbf{19} (1964), no. 2, 1--63.

\bibitem{MP2016} N. Makarov and A. Poltoratski,
\emph{Two-spectra theorem with uncertainty},
J. Spectr. Theory, to appear.
\href{https://arxiv.org/abs/1612.05165}{arXiv:1612.05165}

\bibitem{MC2014} Kh. R. Mamedov and F. A. Cetinkaya,
\emph{Eigenparameter dependent inverse boundary value problem for a class of Sturm--Liouville operator},
Bound. Value Probl. \textbf{2014}, 2014:194, 13 pp.

\bibitem{M1977} V. A. Marchenko,
\emph{Sturm--Liouville operators and applications} (Russian),
Naukova Dumka, Kiev, 1977;
English transl.: AMS Chelsea Publishing, Providence, RI, 2011.

\bibitem{MS1991} V. B. Matveev and M. A. Salle,
\emph{Darboux transformations and solitons},
Springer-Verlag, Berlin, 1991.

\bibitem{MT2001} K. Mochizuki and I. Trooshin,
\emph{Inverse problem for interior spectral data of the Sturm--Liouville operator},
J. Inverse Ill-Posed Probl. \textbf{9} (2001), no. 4, 425--433.

\bibitem{PT1987} J. P\"{o}schel and E. Trubowitz,
\emph{Inverse spectral theory},
Academic Press, Inc., Boston, MA, 1987.

\bibitem{R2000} A. G. Ramm,
\emph{Property C for ODE and applications to inverse problems},
Operator theory and its applications, Amer. Math. Soc., Providence, RI, 2000, pp. 15--75.
\href{https://arxiv.org/abs/math-ph/0008002}{arXiv:math-ph/0008002}

\bibitem{SP2006} V. A. Sadovnichii and V. E. Podol'skii,
\emph{Traces of operators} (Russian),
Uspekhi Mat. Nauk \textbf{61} (2006), no. 5(371), 89--156; English transl. in Russian Math. Surveys \textbf{61} (2006), no. 5, 885--953.

\bibitem{SSA2016} V. A. Sadovnichii, Ya. T. Sultanaev and A. M. Akhtyamov,
\emph{Uniqueness of reconstruction of the Sturm--Liouville problem with spectral polynomials in nonseparated boundary conditions} (Russian),
Dokl. Akad. Nauk \textbf{469} (2016), no. 6, 659--661; English transl. in Dokl. Math. \textbf{94} (2016), no. 1, 461--463.

\bibitem{S1978} U.-W. Schmincke,
\emph{On Schr\"{o}dinger's factorization method for Sturm--Liouville operators},
Proc. Roy. Soc. Edinburgh Sect. A \textbf{80} (1978), no. 1-2, 67--84.

\bibitem{W2013} Y. P. Wang,
\emph{Uniqueness theorems for Sturm--Liouville operators with boundary conditions polynomially dependent on the eigenparameter from spectral data},
Results Math. \textbf{63} (2013), no. 3-4, 1131--1144.

\bibitem{WX2012} G. Wei and H.-K. Xu,
\emph{Inverse spectral problem with partial information given on the potential and norming constants},
Trans. Amer. Math. Soc. \textbf{364} (2012), no. 6, 3265--3288.

\bibitem{YH2010} C.-F. Yang and Z.-Y. Huang,
\emph{A half-inverse problem with eigenparameter dependent boundary conditions},
Numer. Funct. Anal. Optim. \textbf{31} (2010), no. 4-6, 754--762.

\end{thebibliography}
\end{document}